\newtheorem{theo}{Theorem}
\newtheorem{defi}{Definition}
\newtheorem{lemma}{Lemma}
\newtheorem{claim}{Claim}
\newtheorem{cor}[theo]{Corollary}
\newtheorem{prob}{Problem}
\newtheorem{prop}{Proposition}
\newcommand{\N}{\mathbb{N}}
\title{Ramsey upper density of infinite graphs}
\date{\vspace{-5ex}}
\author{Ander Lamaison\thanks{Funded by the Deutsche Forschungsgemeinschaft (DFG, German Research Foundation) under Germany's Excellence Strategy - The Berlin Mathematics Research Center MATH+ (EXC-2046/1, project ID: 390685689).}\\ Freie Universit\"at Berlin}
\begin{document}
\maketitle

\begin{abstract}

For a fixed infinite graph $H$, we study the largest density of a monochromatic subgraph isomorphic to $H$ that can be found in every two-coloring of the edges of $K_\N$. This is called the Ramsey upper density of $H$, and was introduced by Erd\H{o}s and Galvin. Recently \cite{CDLL}, the Ramsey upper density of the infinite path was determined. Here, we find the value of this density for all locally finite graphs $H$ up to a factor of 2, answering a question of DeBiasio and McKenney. 

We also find the exact density for a wide class of bipartite graphs, including all locally finite forests. Our approach relates this problem to the solution of an optimization problem for continuous functions. We show that, under certain conditions, the density depends only on the chromatic number of $H$, the number of components of $H$, and the expansion ratio $|N(I)|/|I|$ of the independent sets of $H$.

\end{abstract}

\section{Introduction}\label{sec:intro}

Let $K_\N$ be the complete graph on the natural numbers. Let $H$ be a countably infinite graph (meaning that the vertex set has the same cardinality as $\N$). Suppose that the edges of $K_\N$ are colored red or blue. We can find a monochromatic subgraph $H'\subseteq K_\N$ isomorphic to $H$. For example, using Ramsey's Theorem, we can produce $H'$ by finding a bijection between $V(H)$ and the vertices of a monochromatic infinite clique. Out of all possible subgraphs $H'$, we want to find one which maximizes its density. To measure the density, we use the following definition:

\begin{defi} Let $S\subseteq \N$. We define the \textit{upper density} of $S$ (in this paper shortened to \textit{density}) as \[\bar d(S)=\limsup\limits_{n\rightarrow\infty}\frac{|S\cap[n]|}{n}.\] If $H'\subseteq K_\N$, we define $\bar d(H')=\bar d(V(H'))$.\end{defi}

We are interested in an extremal question: if $H$ is a fixed graph, what is the maximum density of $H'$ that we can find in every red-blue coloring of $K_\N$? We call this value the Ramsey upper density of $H$.

\begin{defi} Let $H$ be a countably infinite graph. We define its Ramsey upper density $\rho(H)$ as the supremum of the values of $\lambda$ for which, for every two-coloring of $E(K_\N)$, there exists a monochromatic subgraph $H'\subseteq K_\N$, isomorphic to $H$, with $\bar d(H')\geq \lambda$.
\end{defi}

The study of this parameter was initiated by Erd\H{o}s and Galvin \cite{ErdGal} for the particular case $H=P_\infty$, the one-way infinite path. They proved that $2/3\leq\rho(P_\infty)\leq8/9$. After some improvements on these bounds in \cite{BiaKen, LoSanWan}, the exact value of $P_\infty$ was determined by Corsten, DeBiasio, Lang and the author \cite{CDLL} as $\rho(P_\infty)=(12+\sqrt{8})/17\approx 0.87226$. The parameter $\rho(H)$ for general $H$ was first introduced by DeBiasio and McKenney~\cite{BiaKen}.

Our aim in this paper is to give bounds on $\rho(H)$ for a wider family of graphs $H$. These results can be found futher down in the introduction, although some of the more general bounds, with a more involved statement, are left for later. As it will turn out, three parameters play an important role in the value of $\rho(H)$: its chromatic number, the number of components and the expansion properties of its independent sets.

\subsection{Notation}

An infinite graph is \textit{locally finite} if every vertex has finite degree. The bounds that we will show in this paper apply only to locally finite graphs $H$.

Given $S\subseteq V(H)$, we will denote by $N(S)=\left(\cup_{v\in S} N(v)\right)\setminus S$ the set of vertices outside $S$ with a neighbor in $S$. We let $\mu(H,n)$ be the minimum value of $|N(I)|$, where $I$ is an independent set in $H$ of size $n$. We say that a set $I$ is \textit{doubly independent} if both $I$ and $N(I)$ are independent.

We say that a family $\{S_1, S_2, \dots\}$ of subsets of $V(H)$ is \textit{concentrated} in at most $k$ components if there are components $C_1, C_2, \dots, C_s$ of $H$, with $s\leq k$, such that all but finitely many sets $S_i$ intersect some component $C_j$. We say that $V(H)$ is concentrated in at most $k$ components if $\{\{v\}:v\in V(H)\}$ is concentrated in at most $k$ components.

On some occasions we will use $C\in \{R,B\}$ to designate a color (red or blue). When this happens, $\bar C$ will denote the other color. In a graph $G$ with colored vertices, we will use $C_G$ to refer to the set of vertices of color $C$. If there is no ambiguity, we will omit the subindex $G$.

If $F$ is a finite graph, we denote by $\omega\cdot F$ the graph obtained by taking the disjoint union of a countably infinite number of copies of $F$.

Finally, we define a function $f(x)$ which will be crucial in relating the values of $\rho(H)$ and $|N(I)|/|I|$, where $I$ is an independent or a doubly independent set of $H$. Unfortunately, there is no satisfying intuition for why this particular choice of $f(x)$, and not another, is behind the relation between these two parameters. It is interesting however that the same function $f(x)$ arises from the study of upper bounds and lower bounds for $\rho(H)$. 

Since the definition of $f(x)$ is quite complicated and its comprehension is not essential to the appreciation of our results, we encourage the reader to skip it for now. For the reading of the introduction, knowing that such a function exists is enough. Of course, for the reading of the proofs, the precise definition becomes necessary.

\begin{defi}\label{defif}
Let $\gamma\in(-1,1)$. For a continuous function $g(x):[0,+\infty)\rightarrow\mathbb{R}$, define \[\Gamma^+_\gamma(g,t)=\min\{x:\gamma x+g(x)\geq t\}\hskip 1cm \Gamma^-_\gamma(g,t)=\min\{x:\gamma x-g(x)\geq t\},\]where we take the minimum of the empty set to be $+\infty$. We define $h(\gamma)$ to be the infimum, over all 1-Lipschitz\footnote{A 1-Lipschitz function is a function satisfying $|f(x)-f(y)|\leq |x-y|$ for every $x,y$ in the domain.} functions $g$ with $g(0)=0$, of \begin{equation}\label{optiprob}h(\gamma)=\inf\limits_{g}\limsup\limits_{t\rightarrow\infty}\frac{\Gamma^+_\gamma(g,t)+\Gamma^-_\gamma(g,t)}{t}.\end{equation}

Define $f:(0,+\infty)\rightarrow\mathbb{R}$ as \[f(\lambda)=1-\frac{1}{\frac{2\lambda}{(1+\lambda)^2}h\left(\frac{\lambda-1}{\lambda+1}\right)+\frac{2\lambda}{1+\lambda}}.\] We define $f(0)=1$ and $f(+\infty)=1/2$ (by \eqref{func} below, we have $\lim\limits_{t\rightarrow 0}f(t)=1$ and $\lim\limits_{t\rightarrow+\infty}f(t)=1/2$.)
\end{defi}

In Appendix \ref{appf} we prove some properties of $f(x)$, including the following bounds:

\begin{equation}\label{func}\frac{x+1}{2x+1}\leq f(x) \leq \left\{
  \begin{array}{ccc}
    \frac{2x^2+3x+7+2\sqrt{x+1}}{4x^2+4x+9} & \text{for} & 0\leq x< 3,\\
    & & \\
    \frac{x+1}{2x} & \text{for} & x \geq 3.
  \end{array}
\right.\end{equation}
The upper bound is tight for $x\in[0,1]$, and we conjecture\footnote{An extended abstract for this paper, published in {\it Acta Math. Univ. Comenianae} for EUROCOMB 2019, stated this as proved. Since then, a mistake in the proof has been found.} that it is tight everywhere. Observe that $f(1)=(12+\sqrt{8})/17=\rho(P_\infty)$.

\begin{figure}
\begin{centering}
\includegraphics[width=60mm]{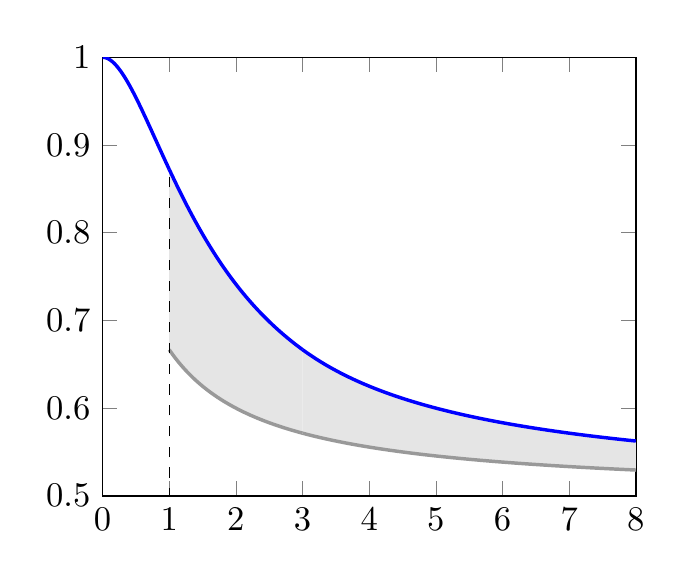}
\caption{Plot of the function $f(x)$ on the interval $[0,1]$, and the upper and lower bounds elsewhere. The conjectured value is given in blue.}
\label{nbex}
\par\end{centering}
\end{figure}

\subsection{Results}

We will now give a few bounds on $\rho(H)$, some of which apply for all locally finite graphs and some of which apply only for particular families. In many cases the specific results follow from other results which are more general, but which have more involved statements. These will be stated in later sections.

For locally finite graphs, knowing the chromatic number and the number of components is enough to determine $\rho(H)$ up to a factor of 2.

\begin{theo}\label{compmt}Let $H$ be a locally finite graph. \begin{enumerate}[label=(\roman*)]\item
\label{compi} If $H$ has infinitely many components, then $\rho(H)\geq 1/2$. 
\item If $H$ has finitely many components: 
	\begin{enumerate}\item\label{compii} If $H$ has infinite chromatic number, then $\rho(H)=0$. 
\item\label{compiii} If $H$ has finite chromatic number, then \[\min\left\{\frac{b}{2(\chi(H)-1)}, \frac12\right\}\leq\rho(H)\leq\min\left\{\frac{b}{\chi(H)-1},1\right\},\]where $b$ is the number of infinite components of $H$.\end{enumerate}\end{enumerate}\end{theo}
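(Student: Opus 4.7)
For part \ref{compi} of Theorem \ref{compmt}, the plan is to exploit the flexibility of infinitely many components through a greedy embedding into a dense monochromatic substrate. A pigeonhole argument shows that some color, WLOG red, admits a set $S\subseteq\N$ with $\bar d(S)\geq 1/2$ such that every $v\in S$ has $\bar d(N_R(v))\geq 1/2$ (since $\bar d(N_R(v))+\bar d(N_B(v))\geq 1$ pointwise, and $\bar d(S_R)+\bar d(S_B)\geq 1$ because $S_R\cup S_B=\N$). After refining $S$ via an infinite-Ramsey step so that finite common neighborhoods inside the refined set remain infinite, I would embed $H$ into the red graph by dovetailing: start fresh components of $H$ anchored at new vertices of $S$, and extend partially embedded components by mapping the next vertex of $H$ to a fresh vertex of $\N$ lying in the common red neighborhood of its already-embedded neighbors. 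The unlimited supply of new components ensures anchors are always available; the image then absorbs essentially all of $S$, yielding density $\geq 1/2$.

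For part \ref{compii}, I would construct an adversarial 2-coloring. Since $H$ has finitely many components, one of them, say $H_0$, has infinite chromatic number, and so contains finite subgraphs $F_k\subseteq H_0$ with $\chi(F_k)\geq k$ for every $k$. Partition $\N$ into consecutive blocks $I_1,I_2,\ldots$ with $|I_k|$ just below the Ramsey number $R(F_k,F_k)$, color each $K_{I_k}$ to avoid monochromatic $F_k$ inside $I_k$, and color cross-block edges in a pattern that confines monochromatic copies of $F_k$ to bounded windows of consecutive blocks. Any monochromatic copy of $H$ must then embed $F_k$ inside $H_0$ for every $k$, and these high-chromatic pieces land in increasingly sparse regions, forcing the density of the image of $H_0$, hence of $H$, to $0$.

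For the upper bound in part \ref{compiii}, the classical partition construction works: write $\N=V_1\sqcup\cdots\sqcup V_{\chi(H)-1}$ with $\bar d(V_i)=1/(\chi(H)-1)$, color edges within each $V_i$ red and all cross edges blue. The blue graph is $(\chi(H)-1)$-partite and so does not contain $H$, and a red copy of $H$ places each connected component entirely inside one $V_i$; the finite components contribute nothing to the density, and each infinite component contributes at most $1/(\chi(H)-1)$, totaling $b/(\chi(H)-1)$. For the lower bound, I would reuse the pigeonhole setup from part \ref{compi}: find a color, say red, and a set $S$ with $\bar d(S)\geq 1/2$ on which red is dense, partition $S$ into $\chi(H)-1$ sub-classes of density $\geq 1/(2(\chi(H)-1))$ each, and embed the $b$ infinite components of $H$ one per sub-class via the same Ramsey-assisted back-and-forth (each such component has chromatic number $\leq\chi(H)$ and therefore embeds into any infinite monochromatic clique found inside the sub-class); the finite components fit in leftover vertices.

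The main obstacles lie in the lower bounds of \ref{compi} and \ref{compiii}: one needs Ramsey-style refinements producing dense monochromatic substrates into which $H$ actually embeds, and controlling the common neighborhoods throughout the back-and-forth without sacrificing the target density is the delicate step. In \ref{compiii}, the factor $2$ in the denominator of the lower bound ultimately reflects the concession of restricting to one of the two available colors, and matching it with the upper bound would require a substantially more refined analysis.
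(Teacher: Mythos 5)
Your upper bound in part \ref{compiii} is correct and matches the paper's Theorem~\ref{compub}; note that the paper deduces part \ref{compii} from the \emph{same} construction, taken for every finite $a\leq\chi(H)$: under the modular coloring into $a-1$ residue classes, the blue graph is $(a-1)$-colorable and hence $H$-free, while a red $H$ is confined to at most $b$ cliques of density $1/(a-1)$, so $\rho(H)\leq b/(a-1)\rightarrow 0$ with $b$ fixed. Your block-based attempt at \ref{compii} is underspecified at exactly the hard step: the cross-block coloring is never defined, a monochromatic $F_k$ can straddle blocks, and there is no argument that the low-chromatic bulk of $H$ cannot still be embedded densely even if the high-chromatic pieces are pushed far out.

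The lower bounds in \ref{compi} and \ref{compiii} share the same fundamental gap. Pointwise density of $N_C(v)$ for $v\in S$ gives no control over common $C$-neighborhoods of several vertices of $S$ (the red graph on $S$ could even be empty), and the ``infinite-Ramsey refinement'' does not preserve density: a monochromatic infinite clique found by Ramsey inside $S$ can have density $0$ and may be of the wrong color. The paper avoids this in \ref{compi} with a cleaner argument: take an inclusion-maximal family $\mathcal{F}$ of pairwise disjoint monochromatic infinite cliques. By maximality and Ramsey's theorem, $\N\setminus V(\mathcal{F})$ is finite, so one color class of cliques has total density at least $1/2$; then partition $V(H)$ into infinitely many bundles, each a union of infinitely many components of $H$, and biject each bundle onto one clique of that color. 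Embedding into honest cliques makes all neighborhood issues disappear, and this is exactly where the hypothesis of infinitely many components is used. For \ref{compiii}, where chromatic structure matters, the paper uses the $a$-good coloring machinery of Section~\ref{sec:genlbounds} (a refinement of the Elekes et al.\ partition), which produces at most $\chi(H)-1$ shades per color with unconditional common-red/blue-neighborhood properties on all but finitely many vertices; running the embedding algorithm so that the image of $H$ covers the densest $\min\{b,\chi(H)-1\}$ shades of the denser color (that color has density $\geq 1/2$) yields the bound $\min\{b/(2(\chi(H)-1)),1/2\}$. Your plan of embedding each infinite component into ``any infinite monochromatic clique found inside the sub-class'' gives no density control whatsoever, since such a clique may have density $0$.
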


This theorem answers a question in \cite{BiaKen}, which asks whether for every $\Delta$ there exists a constant $c>0$ such that every graph with maximum degree at most $\Delta$ has Ramsey upper density at least $c$:

\begin{cor}\label{bkquestion}If $H$ has maximum degree at most $\Delta$, then $\rho(H)\geq 1/(2\Delta)$.\end{cor}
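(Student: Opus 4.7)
The plan is to deduce this directly from Theorem~\ref{compmt}, combined with the greedy bound $\chi(H)\leq \Delta+1$. One may assume $\Delta\geq 1$, since if $\Delta=0$ then $H$ is an infinite edgeless graph and any bijection with $\N$ gives $\rho(H)=1$ trivially.

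With $\chi(H)\leq \Delta+1<\infty$, case \ref{compii} of Theorem~\ref{compmt} is automatically ruled out, so I would simply check the two remaining cases. In case \ref{compi}, $H$ has infinitely many components, hence $\rho(H)\geq 1/2\geq 1/(2\Delta)$ and we are done. In case \ref{compiii}, since $H$ is countably infinite and has only finitely many components, the pigeonhole principle forces at least one component to be infinite, so the parameter $b$ appearing in Theorem~\ref{compmt} satisfies $b\geq 1$. Combining $b\geq 1$ with $\chi(H)-1\leq \Delta$ yields
\[\rho(H)\geq\min\left\{\frac{b}{2(\chi(H)-1)},\frac{1}{2}\right\}\geq\min\left\{\frac{1}{2\Delta},\frac{1}{2}\right\}=\frac{1}{2\Delta},\]
which finishes the argument.

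There is no genuine obstacle here beyond Theorem~\ref{compmt} itself: the corollary is the observation that all surviving branches of that theorem deliver at least $1/(2\Delta)$ once $\chi(H)$ is controlled by $\Delta$ via greedy coloring. All of the real work is hidden in the proof of Theorem~\ref{compmt}, not in this corollary; the only point worth singling out is the reduction of the DeBiasio--McKenney question to a bound on the chromatic number, which is what makes the corollary formally immediate.
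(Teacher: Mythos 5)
Your proof is correct and matches what the paper intends: the corollary is stated immediately after Theorem~\ref{compmt} with no separate argument, precisely because it follows by combining the greedy bound $\chi(H)\leq\Delta+1$ with the case analysis you carry out. Your handling of the edge case $\Delta=0$ and the observation that $b\geq 1$ (an infinite graph with finitely many components must have an infinite component) are the right small checks to make the deduction airtight.
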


Let $P_\infty^k$ be the $k$-th power of the infinite path, that is, the graph on $\N$ in which $x$ and $y$ are connected if $|x-y|\leq k$. Elekes et al. \cite{ESSS} showed that, in every two-coloring of $K_\N$, the vertex set can be partitioned into at most $2^{2k-1}$ monocromatic copies of $P_\infty^k$ plus a finite set, and the number of copies can be reduced to four for $P_\infty^2$. DeBiasio and McKenney \cite{BiaKen} pointed out that this implies $\rho(P_\infty^2)\geq 1/4$ and $\rho(P_\infty^k)\geq 2^{1-2k}$. Theorem \ref{compmt} improves the bound for $k\geq 3$ to $\rho(P_\infty^k)\geq 1/(2k)$.

While no graph $H$ is known for which the lower bound in \ref{compiii} is tight and not equal to $1/2$, the upper bound is tight in the following example. Let $T$ be the tree formed by an infinite path $v_1v_2v_3\dots$, in which we attach $i$ leaves to $v_i$ for every $i\in\N$. Then $\rho(b\cdot T+K_a)=b/(a-1)$ for every $1\leq b<a$, where $b\cdot T+K_a$ denotes the disjoint union of $b$ copies of $T$ and an $a$-clique. The lower bound will follow from Theorem \ref{mainlb}.

Another upper bound that applies to all locally finite graphs is related to the expansion of its independent sets:

\begin{theo}\label{mainub} Let $H$ be a locally finite graph. Then \[\rho(H)\leq f\left(\liminf\limits_{n\rightarrow\infty}\frac{\mu(H,n)}n\right)\].
\end{theo}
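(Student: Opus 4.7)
The plan is to establish the upper bound by constructing, for every 1-Lipschitz function $g:[0,+\infty)\to\mathbb{R}$ with $g(0)=0$, a two-coloring $\chi_g$ of $E(K_\N)$ in which every monochromatic isomorphic copy of $H$ has upper density at most a quantity $F(g,\lambda)$ depending on $g$ and $\lambda:=\liminf_n\mu(H,n)/n$; optimizing over $g$ then recovers $F(g,\lambda)=f(\lambda)$.

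For the construction, I would set $\gamma=(\lambda-1)/(\lambda+1)\in(-1,1)$ and use $g$ to partition $\N$ into a nested family of blocks, with sizes at scale $t$ dictated by $\Gamma^+_\gamma(g,t)$ and $\Gamma^-_\gamma(g,t)$. Reading off the formula defining $f$, the natural choice is for the ``red block'' at scale $t$ to have width $\Gamma^+_\gamma(g,t)$ and the ``blue block'' to have width $\Gamma^-_\gamma(g,t)$. Edges would then be colored according to the relative positions of their endpoints in this hierarchy, so that at each scale the coloring has a bipartite-like structure which forces red and blue cliques at scale $t$ to have sizes bounded essentially by $\Gamma^+_\gamma(g,t)$ and $\Gamma^-_\gamma(g,t)$ respectively.

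The core of the argument is the analysis. Let $H'\subseteq K_\N$ be a red monochromatic copy of $H$. For any independent set $I\subseteq V(H)$, the image of $I$ in $H'$ contains no red edge and is therefore a blue clique in $K_\N$; moreover the image of $N_H(I)$ is red-connected to the image of $I$, has cardinality at least $\mu(H,|I|)\geq(\lambda-o(1))|I|$ for large $|I|$, and must fit into positions singled out by $\Gamma^+_\gamma$. Partitioning $V(H)$ into large independent sets together with their neighborhoods and tracking this through the scales of $\chi_g$ yields a density estimate of the form
\[
\bar d(H')\le 1-\frac{1}{\tfrac{2\lambda}{(1+\lambda)^2}\limsup_{t\to\infty}\frac{\Gamma^+_\gamma(g,t)+\Gamma^-_\gamma(g,t)}{t}+\tfrac{2\lambda}{1+\lambda}}.
\]
A symmetric argument handles blue monochromatic copies using $g\mapsto -g$. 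Taking the infimum over admissible $g$ then yields $\bar d(H')\le f(\lambda)$, as required.

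The main obstacle is the multi-scale bookkeeping: the expansion $|N(I)|\geq\mu(H,|I|)$ is only informative for large independent sets, so $V(H)$ must be carefully decomposed into chunks operating at compatible scales of the coloring, with the contributions of each chunk aggregating into the claimed bound up to $o(1)$ error. A secondary difficulty is extracting exactly the normalization $2\lambda/(1+\lambda)^2$ and additive constant $2\lambda/(1+\lambda)$ prescribed by Definition~\ref{defif} from the interplay between the reach function $g$ and the parameter $\gamma$, which I expect to be the source of a sizeable computational step at the end of the argument.
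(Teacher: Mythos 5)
The high-level intuition here is sound and matches the paper's: pick a $1$-Lipschitz function $g$ nearly attaining the infimum in the definition of $h(\gamma)$, build a coloring of $K_\N$ from it, and then use the expansion inequality $|N(I)|\geq\mu(H,|I|)$ to show that independent sets of $H$ cannot be embedded too densely as monochromatic cliques of the opposite color. The target inequality you write down at the end is exactly what the paper proves. However, the coloring you sketch and the analysis you propose both differ from the paper's, and neither is developed far enough to check.

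On the construction: the paper does not use a nested multi-scale block hierarchy. It colors the \emph{vertices} $v_1,v_2,\dots$ (arranged left to right) red or blue so that the first $n$ vertices contain exactly $\lfloor (n+g(n))/2\rfloor$ red ones, and then assigns each \emph{edge} the color of its leftmost endpoint. This single trick ensures that any blue clique consists (up to one vertex) of blue vertices and that the set of vertices that are red-adjacent to a given prefix of blue vertices is precisely the set of red vertices to its left. The functions $\Gamma^\pm_\gamma$ are not used to set block widths; they appear only in the analysis, to bound the indices $\alpha_i,\beta_i$ that locate the first red (resp.\ blue) vertex with few opposite-colored vertices to its left. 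Your proposal of ``nested blocks of width $\Gamma^\pm_\gamma(g,t)$ at scale $t$, with edges colored by relative position in the hierarchy'' describes a genuinely different and more complicated coloring whose monochromatic clique structure is not specified precisely enough to verify the density bound. It is not obvious such a coloring enforces the stated size constraints on cliques, and the paper shows no such hierarchy is needed.

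On the analysis: the paper does not decompose $V(H)$ into many large independent sets and track them across scales. For a red copy $H'$ and a prefix $[n]$, it looks at a \emph{single} independent set $B_n=V(H')\cap[n]\cap B$ (the blue vertices of $H'$ in the prefix), uses the coloring to bound the number of red vertices adjacent to $B_n$, feeds this into $\mu(H,|B_n|)\geq(\lambda-o(1))|B_n|$, and concludes that many vertices of $[n]$ are excluded from $H'$. A secondary ingredient you omit is the construction of the bijection $\phi:\N\to\{v_1,v_2,\dots\}$ with $\phi([\alpha_j+\beta_j])=\{r_1,\dots,r_{\alpha_j},b_1,\dots,b_{\beta_j}\}$, which is what actually ties the combinatorial count $\alpha_j+\beta_j$ to the upper density in the original ordering of $\N$; without some such rearrangement the density computation does not close. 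As written, your proposal identifies the right target and the right expansion-based obstruction, but the ``multi-scale bookkeeping'' you flag as the main obstacle is in fact avoidable, and the construction step that replaces it is missing.
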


There are many graphs for which the bound in Theorem \ref{mainub} is tight. The following theorem captures some of them.

\begin{theo}\label{forest} Let $H$ be a locally finite forest, or a locally finite bipartite graph in which every orbit of the automorphism group acting on $V(H)$ has infinite size. Then \[\rho(H)=f\left(\liminf\limits_{n\rightarrow\infty}\frac{\mu(H,n)}{n}\right)\]. 
\end{theo}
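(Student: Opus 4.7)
Since Theorem~\ref{mainub} gives $\rho(H) \leq f(\lambda)$ where $\lambda = \liminf_n \mu(H,n)/n$, the content of Theorem~\ref{forest} is the matching lower bound. My plan is to show that for every 2-coloring of $E(K_\N)$ and every $\varepsilon > 0$ there is a monochromatic copy $H' \cong H$ with $\bar d(H') \geq f(\lambda) - \varepsilon$.

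The first step is to mirror the structure of the definition of $f$ inside the coloring. Given a coloring, I would encode it by a pair of monotone functions that track the cumulative red and blue activity along $\N$ (for instance, the sizes of the largest ``useful'' red and blue sets found in $[1,n]$ so far); after passing to a subsequence and a suitable rescaling, their normalized difference converges to a 1-Lipschitz function $g$ with $g(0)=0$. The density of the monochromatic copy that can be built should then be expressed, under this normalization, via $\Gamma^+_\gamma(g,\cdot)$ and $\Gamma^-_\gamma(g,\cdot)$, where $\gamma=(\lambda-1)/(\lambda+1)$ is dictated by the bipartition ratio that the hypothesis $\mu(H,n)\gtrsim \lambda n$ forces on $H$. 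Taking the worst admissible $g$ should then recover $f(\lambda)$ as the attainable density.

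The construction itself is an embedding of $H$ by a back-and-forth procedure. In the forest case, I enumerate $V(H)$ so that each new vertex has at most one earlier neighbor and place it greedily at the first unused vertex of $\N$ whose edge to the parent is the prescribed color; because there is only one constraint at each step, a direct greedy argument suffices. In the bipartite-with-infinite-orbits case, the infinite-orbit hypothesis allows me to replace any vertex $v\in V(H)$ by another vertex in the same automorphism class, so that any finite obstruction can be postponed; a standard back-and-forth enumeration (alternately extending domain and image) then converges to a full embedding. In both cases bipartiteness means that once the image of one class of the bipartition is fixed, the embedding of the other class is constrained only by the edges of one color between this image and its neighborhood, which is exactly the information encoded by the function $g$.

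The main obstacle I expect is verifying that the density attained by the embedding is \emph{exactly} $f(\lambda)$, not a weaker bound. This requires the embedding to choose, at each scale $n$, the color and the portion of $[1,n]$ to occupy in a way that realizes the extremal optimum in the $h(\gamma)$ optimization; the Lipschitz constraint on $g$ is what makes such a choice feasible, but matching it precisely will demand careful bookkeeping. A secondary difficulty is that $\mu(H,n)/n$ is only controlled as a $\liminf$, so the argument must be robust to fluctuations; this is where the forest or infinite-orbit hypothesis does the real work, since both let me reschedule the embedding onto a subsequence of scales along which $\mu(H,n)/n$ is close to $\lambda$, whereas a general locally finite bipartite $H$ might not afford this flexibility.
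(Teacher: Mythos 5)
Your sketch identifies that the upper bound is Theorem~\ref{mainub} and that the real content is the lower bound, which is correct. But the lower-bound strategy you describe has a genuine gap that is not just bookkeeping: a greedy embedding that ``places each vertex at the first unused vertex of $\N$ whose edge to the parent has the prescribed color'' gives no control whatsoever over the density of the image. Producing \emph{some} monochromatic copy of a locally finite forest is easy; the entire difficulty is occupying a $\bar d$-fraction arbitrarily close to $f(\lambda)$ of $\N$, and for that the image must be forced to lie in specific dense sets of $\N$, not whatever the greedy happens to pick. Nothing in your description forces the occupied vertices to be dense. The same criticism applies to the back-and-forth argument in the infinite-orbit case: back-and-forth yields an isomorphic copy, but says nothing about density.

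The paper's route is structurally different and fills exactly that gap. It first proves Theorem~\ref{mainthmbip}, a general sufficient condition for bipartite $H$: if for every $\lambda'>\lambda$ there are infinitely many pairwise disjoint independent sets $I_1,I_2,\dots$, all of the \emph{same} size, with $|N(I_i)|/|I_i|\le\lambda'$, then $\rho(H)=f(\lambda)$. This is fed into Theorem~\ref{mainlb}, whose proof combines three ingredients you do not mention: an $a$-good vertex coloring of $\N$ (Lemma~\ref{essscol}, a variant of the Elekes et al.\ partition), the regularity lemma plus a max-flow/min-cut argument (Lemma~\ref{finddis}) to locate a dense subgraph $W\subseteq K_\N$ whose components are monochromatic $K_{r,s}$'s using one shade per color, and an embedding algorithm (Lemma~\ref{algoh}) that maps the sets $I_i\cup N(I_i)$ bijectively onto those $K_{r,s}$'s to harvest the density of $W$, while the rest of $H$ is embedded greedily using the Ramsey-type properties of the shades. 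The Lipschitz function $g$ and the quantities $\Gamma^\pm_\gamma$ appear not as a direct ``encoding of the coloring'' but only after the regularity reduction, as the rearranged degree sequence of one color class inside a reduced graph, which is then analysed via Lemma~\ref{mfmc} and Lemma~\ref{asymt}. Your proposal to read $g$ off directly from the coloring skips this reduction and has no mechanism for turning $g$ into an actual dense monochromatic subgraph.

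Finally, the role of the forest/infinite-orbit hypothesis is not, as you say, to ``reschedule the embedding onto a subsequence of scales.'' It is to verify the hypothesis of Theorem~\ref{mainthmbip}: one must exhibit infinitely many pairwise \emph{disjoint} independent sets of a \emph{fixed} size with good expansion ratio. For infinite orbits this is done by an induction producing infinitely many automorphic, pairwise disjoint images of one well-chosen $I$; for forests the paper proves Lemma~\ref{treecut}, a structural cutting lemma showing that any large independent set with expansion ratio close to $\lambda$ contains a bounded-size independent set with nearly the same ratio, and then greedily extracts disjoint copies. Neither of these reductions appears in your proposal. As written, your argument would establish at best that a monochromatic copy of $H$ exists (which is already known), not the density bound $f(\lambda)$.
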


This is a particular case of a more general condition on bipartite graphs that is sufficient for Theorem \ref{mainub} to be tight. That condition is stated later as Theorem \ref{mainthmbip}. The following corollaries illustrate some examples of graphs for which Theorem \ref{forest} applies:

\begin{cor}\label{kary} Let $T_k$ be the infinite $k$-ary tree, that is, the rooted tree in which every vertex has $k$ children. Then $\rho(T_k)=f(k)$.\end{cor}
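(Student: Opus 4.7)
The plan is to apply Theorem \ref{forest}: since $T_k$ is a locally finite tree, it is a locally finite forest, so it suffices to show that $\liminf_{n\to\infty}\mu(T_k,n)/n = k$.

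For the upper bound on this liminf, I would exhibit an explicit sequence of independent sets attaining ratio $k$. Root $T_k$ at its root, and let $I_d$ be the set of vertices at even depth at most $2d$. Since $T_k$ is bipartite with bipartition given by depth parity, $I_d$ is independent, and $N(I_d)$ is precisely the set of vertices at odd depth at most $2d+1$. Both cardinalities are geometric sums:
\[
|I_d| \;=\; \sum_{i=0}^{d} k^{2i} \;=\; \frac{k^{2d+2}-1}{k^2-1}, \qquad |N(I_d)| \;=\; \sum_{i=0}^{d} k^{2i+1} \;=\; k\,|I_d|,
\]
so $\mu(T_k,|I_d|)/|I_d| \leq k$.

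For the matching lower bound, I would show that $|N(I)| \geq k|I|$ for every finite independent set $I \subseteq V(T_k)$. The key point is that $T_k$ is a tree, so the subgraph induced on $I \cup N(I)$ is a forest and hence carries at most $|I|+|N(I)|-1$ edges. On the other hand, because $I$ is independent, every neighbor of a vertex of $I$ lies in $N(I)$, and the number of edges in the induced subgraph is exactly $\sum_{v\in I}\deg_{T_k}(v)$. Every vertex of $T_k$ has degree $k$ (the root) or $k+1$ (every other vertex), so this sum is at least $(k+1)|I|-1$. Combining the two counts gives $|N(I)| \geq k|I|$.

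Putting the two steps together, $\mu(T_k,n)\geq kn$ for all $n$, and equality is attained along the sequence $n=|I_d|$, so $\liminf_{n\to\infty}\mu(T_k,n)/n = k$, and Theorem \ref{forest} yields $\rho(T_k)=f(k)$. I do not anticipate a genuine obstacle beyond this edge count: the forest structure of $T_k$ simultaneously pins the independent-set expansion to exactly $k$ from both directions, so the whole argument reduces to the geometric-series construction and one application of the forest bound $|E|\leq |V|-1$.
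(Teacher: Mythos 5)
Your proof is correct, but it takes a different route from the paper on both sides of the estimate. For the lower bound $\mu(T_k,n)\geq kn$, the paper simply observes that the set of children of the vertices of an independent set $I$ has size exactly $kn$ (children of distinct vertices in a tree are distinct) and is contained in $N(I)$; you instead count edges in the forest induced on $I\cup N(I)$. Both work, but note one imprecision in yours: the number of edges in that induced subgraph is \emph{at least} $\sum_{v\in I}\deg(v)$, not exactly equal to it --- there can be edges inside $N(I)$ (e.g.\ take $I=\{v_1,v_4\}$ along a descending path $v_1v_2v_3v_4$ in $T_k$, so that $v_2v_3$ is an edge with both endpoints in $N(I)$). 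Since your argument only uses this count as a lower bound on the number of edges, the conclusion is unaffected. For tightness, the paper exhibits an independent set of every size $n$ with $|N(I)|=kn$, taking $v_1$ the root and each $v_{i+1}$ a grandchild of $v_i$; you take all even-depth vertices up to depth $2d$, which attains ratio exactly $k$ only along the subsequence $n=|I_d|$. That is weaker but still enough, since Theorem \ref{forest} only needs $\liminf_n \mu(T_k,n)/n=k$; the paper's construction just gives the cleaner exact identity $\mu(T_k,n)=kn$ for all $n$.
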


\begin{cor}\label{grid} Let $\mathrm{Grid}_d$ be the infinite $d$-dimensional grid, that is, the graph on $\mathbb{Z}^d$ where two vertices are connected if they are at Euclidean distance 1. Then $\rho(\mathrm{Grid}_d)=f(1)=(12+\sqrt{8})/17\approx 0.87226$.\end{cor}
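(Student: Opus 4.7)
The plan is to invoke Theorem \ref{forest} with $H=\mathrm{Grid}_d$, so the task reduces to two things: checking that the theorem's hypotheses are satisfied, and evaluating the liminf $\liminf_{n\to\infty}\mu(\mathrm{Grid}_d,n)/n$. For the hypotheses, $\mathrm{Grid}_d$ is locally finite (every vertex has degree $2d$), bipartite (the bipartition is given by the parity of $x_1+\cdots+x_d$), and vertex-transitive because the translation group $\mathbb{Z}^d$ acts on it by automorphisms; in particular every automorphism orbit is infinite. So Theorem \ref{forest} applies.

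Next I would show $\liminf_{n\to\infty}\mu(\mathrm{Grid}_d,n)/n=1$, and then use the fact that $f(1)=(12+\sqrt{8})/17$ noted after \eqref{func}. For the lower bound $\mu(\mathrm{Grid}_d,n)\geq n$: the grid is $2d$-regular and bipartite, so for any independent set $I$, every edge incident to $I$ lands in $N(I)$; double-counting gives $2d\,|I|\leq 2d\,|N(I)|$, hence $|N(I)|\geq |I|$. For the matching upper bound I would exhibit explicit independent sets with expansion ratio tending to $1$. Take the box $Q_n=\{0,1,\dots,n-1\}^d$ and let $I_n$ be one of its bipartition classes (say, vertices of even coordinate sum). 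Then $|I_n|=\lceil n^d/2\rceil$, while $N(I_n)$ is contained in the other bipartition class of $Q_n$ together with the layer of vertices just outside $Q_n$ that are adjacent to $I_n$; the first has size $\lfloor n^d/2\rfloor$ and the second has size $O(n^{d-1})$. Therefore
\[
\frac{|N(I_n)|}{|I_n|}\leq 1+O(n^{-1}),
\]
which forces $\liminf_{n\to\infty}\mu(\mathrm{Grid}_d,n)/n\leq 1$.

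Combining the two bounds gives $\liminf_{n\to\infty}\mu(\mathrm{Grid}_d,n)/n=1$, and Theorem \ref{forest} now yields
\[
\rho(\mathrm{Grid}_d)=f(1)=\frac{12+\sqrt{8}}{17},
\]
which is precisely the statement of the corollary. The main obstacle is essentially bookkeeping: checking the corner/boundary contribution in the upper bound construction so that the $O(n^{d-1})$ error is indeed negligible compared to the $\Theta(n^d)$ main term. The heavy lifting has already been done by Theorem \ref{forest}; once the expansion ratio is pinned down, the value of $\rho$ is a direct substitution.
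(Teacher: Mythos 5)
Your proof is correct and follows essentially the same route as the paper: verify that $\mathrm{Grid}_d$ is locally finite, bipartite, and vertex-transitive so that Theorem~\ref{forest} applies, then pin down $\liminf_n\mu(\mathrm{Grid}_d,n)/n=1$ via a lower bound on $|N(I)|$ (the paper uses a translation $I+e_1\subseteq N(I)$, you use degree double-counting — both fine) and an upper bound from bipartition classes of large boxes whose boundary is $O(n^{d-1})$. The bookkeeping is slightly different (the paper packages the boundary estimate as $|N(I)|/|I|=|I\cup N(I)|/|I|-1\leq (2k+2)^d/((2k)^d/2)-1$), but the argument is the same.
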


\begin{cor}\label{bipfac} Let $F$ be a finite bipartite graph. Then \[\rho(\omega\cdot F)=f\left(\min\limits_{\substack{I\text{ indep. in }F\\I\neq\emptyset}}\frac{|N(I)|}{|I|}\right).\] In particular, for every $1\leq a\leq b$ we have \[\rho(\omega\cdot K_{a,b})=f\left(\frac ab\right)=\frac{2\left(\frac ab\right)^2+3\left(\frac ab\right)+7+2\sqrt{\frac ab+1}}{4\left(\frac ab\right)^2+4\left(\frac ab\right)+9}.\]\end{cor}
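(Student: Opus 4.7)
The plan is to derive both statements as consequences of Theorem \ref{forest}, with essentially all the work going into verifying the hypotheses for $H = \omega\cdot F$ and evaluating the expansion ratio $\liminf_{n}\mu(H,n)/n$ explicitly.

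First I would check that $H=\omega\cdot F$ satisfies the hypotheses of Theorem \ref{forest}. Since $F$ is finite, $H$ is locally finite; since $F$ is bipartite, the disjoint union $\omega\cdot F$ is bipartite. For the orbit condition, for every vertex $v\in V(F)$ the countably many copies of $v$ (one in each copy of $F$) can be freely permuted by an automorphism of $H$, so every orbit of $\mathrm{Aut}(H)$ on $V(H)$ is infinite. Hence the second alternative of Theorem \ref{forest} applies.

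Next I would compute $\liminf_n \mu(H,n)/n$. Denote by $r$ the minimum on the right-hand side of the corollary; this minimum is attained because $F$ is finite. Any independent set $J$ of $H$ decomposes as a disjoint union $J=\bigsqcup_i J_i$ with $J_i$ independent in the $i$-th copy of $F$, and neighborhoods in distinct copies are disjoint, so
\[
\frac{|N(J)|}{|J|}=\frac{\sum_i |N(J_i)|}{\sum_i |J_i|}\geq r,
\]
with the convention that empty pieces contribute nothing. This gives $\mu(H,n)\geq rn$ for every $n$. Conversely, if $I^*\subseteq V(F)$ attains the minimum, then placing a copy of $I^*$ inside each of $k$ distinct copies of $F$ produces an independent set of size $k|I^*|$ whose neighborhood has size exactly $rk|I^*|$. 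Hence $\mu(H,n)/n=r$ along the subsequence $n=k|I^*|$, so the liminf equals $r$. Theorem \ref{forest} then gives $\rho(\omega\cdot F)=f(r)$.

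For the case $F=K_{a,b}$ with $1\leq a\leq b$, the independent sets are exactly the nonempty subsets of one part. A subset $I$ of size $k$ of the $a$-side gives $|N(I)|/|I|=b/k\geq b/a$, while a subset of size $k$ of the $b$-side gives $|N(I)|/|I|=a/k\geq a/b$, with the second achieved by $I$ equal to the full $b$-side. Since $a\leq b$, we have $a/b\leq 1\leq b/a$, so $r=a/b\in(0,1]$. Plugging $x=a/b$ into the upper bound in \eqref{func}, which is stated to be tight on $[0,1]$ and therefore coincides with $f$ there, yields the claimed closed form. The proof is essentially bookkeeping; the only points that deserve care are that the minimum defining $r$ is attained (finiteness of $F$) and that $\liminf \mu(H,n)/n=r$ even though $\mu(H,n)/n$ may exceed $r$ for $n$ not divisible by $|I^*|$.
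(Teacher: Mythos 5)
Your proof is correct and follows exactly the route the paper intends: Corollary \ref{bipfac} is presented as an application of Theorem \ref{forest} via the infinite-orbit alternative, and your computation of $\liminf_n\mu(\omega\cdot F,n)/n$ (lower bound by decomposing an independent set across copies, upper bound by replicating a minimizer $I^*$) is the natural verification. The $K_{a,b}$ specialization and the appeal to Proposition \ref{exactf} (tightness of the upper bound of \eqref{func} on $[0,1]$) are also as intended.
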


In a finite bipartite graph $F$, there is always an independent set satisfyng $|N(I)|\leq|I|$ (one of the two partition classes has this), so the value of $\rho(\omega\cdot F)$ always falls on the range in which $f(x)$ is known explicitly.



Finally, we will give two more lower bound in the particular case of infinite factors $\omega\cdot F$. The first one is analogous to Corollary \ref{bipfac}:

\begin{theo}\label{factornb} Let $F$ be a finite connected graph, and let $I\subseteq V(F)$ be a non-empty doubly independent set. Then $\rho(\omega\cdot F)\geq f\left(\frac{|N(I)|}{|I|}\right)$. \end{theo}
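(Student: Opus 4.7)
The strategy is to reduce to the bipartite case (Corollary \ref{bipfac}) by isolating a bipartite sub-skeleton, then extending each copy using Ramsey's theorem. Write $V(F) = I \sqcup N(I) \sqcup W$ with $W = V(F) \setminus (I \cup N(I))$. Since $I$ is doubly independent, both $I$ and $N(I)$ are independent in $F$, and no $F$-edge connects $I$ and $W$ (the $F$-neighborhood of $I$ lies entirely in $N(I)$). Thus $F^* := F[I \cup N(I)]$ is a finite bipartite graph with parts $I$ and $N(I)$, and in it $|N_{F^*}(I)|/|I| = k := |N(I)|/|I|$.

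I would first invoke the lower-bound construction that proves Corollary \ref{bipfac}, applied to $F^*$ with witness independent set $I$. For any 2-coloring of $E(K_\N)$, this produces infinitely many pairwise vertex-disjoint monochromatic (say, red) copies of $F^*$ whose vertex union has upper density $f(k)$; call this collection the \emph{skeleton}. The second stage extends each $F^*$-copy to a full monochromatic red copy of $F$ by attaching $|W|$ fresh vertices that form a red copy of $F[N(I) \cup W]$ anchored on the existing $N(I)$-slot. The doubly independent hypothesis is crucial here: since no $F$-edge connects $I$ and $W$, the extension imposes color constraints only on the edges between the $N(I)$-slot and the new $W$-vertices, and among the $W$-vertices themselves, with no interference with the $I$-slot already placed. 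To guarantee infinitely many such extensions vertex-disjointly, I would preprocess the 2-coloring via a Ramsey-type canonization on the finite graph $F[N(I) \cup W]$, restricting the skeleton construction to a structured infinite subset of $\N$ in which every $|N(I)|$-tuple admits a red extension to $F[N(I) \cup W]$ in the unused vertices. Since the vertex set of the resulting $\omega \cdot F$ contains the skeleton, its upper density is at least $f(k)$.

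The main obstacle is coordinating the Ramsey canonization with the skeleton construction without destroying the density bound: a naïve global canonization (passing to an infinite sparse subset) would shrink the skeleton density far below $f(k)$. The natural fix is to work locally, canonizing within bounded-size neighborhoods of each prospective $N(I)$-slot rather than globally, so that each copy of $F^*$ is guaranteed a local red extension while the macroscopic density of the skeleton (governed by $f(k)$) is unaffected. An alternative that avoids this balancing act is to directly modify the bipartite construction of Corollary \ref{bipfac} so that each $N(I)$-slot is produced inside a red substructure large enough to host an $F[N(I) \cup W]$-extension automatically, paying only a bounded amount of extra density per copy and then subsuming this into the $\geq f(k)$ bound via $|V(F)| \geq |I|+|N(I)|$.
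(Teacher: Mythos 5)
The decomposition $V(F) = I \sqcup N(I) \sqcup W$ and the observation that double independence means no $F$-edge joins $I$ to $W$ are both correct, and they are the key structural facts in the paper's proof too. But the proposed two-stage plan (build a dense bipartite skeleton of $F^*$-copies, then Ramsey-extend each $N(I)$-slot by $W$) has a genuine gap that the proposed fixes do not close. A given monochromatic red $K_{r,s}$ found by the skeleton construction need not have \emph{any} red extension to $F[N(I)\cup W]$: this is a property of the ambient edge-coloring, not a local one, so ``canonizing within bounded-size neighborhoods'' is not well-defined and cannot be made to work. Passing to an infinite Ramsey-nice subset before building the skeleton would, as you note, destroy the upper density. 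Your second alternative (build the skeleton inside a pre-organized red substructure that automatically hosts the $W$-extension) points in the right direction, but it is exactly the missing content: you would in effect have to re-derive the whole machinery of the $a$-good coloring (Lemma \ref{essscol}) and the embedding algorithm (Lemma \ref{algoh}) to guarantee, density-free, infinitely many common red neighbors of the $N(I)$-slot in the right shade.

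The paper sidesteps all of this by invoking Theorem \ref{mainlb} directly rather than treating Corollary \ref{bipfac} as a black box. Set $a = |V(F)|$, $b = a-1$, and define $\Psi : V(F) \to [a]$ to give every vertex of $N(I)$ the value $a$ and each vertex of $I \cup W$ a distinct value in $[a-1]$. Double independence of $I$ is precisely what makes $\Psi$ proper. Taking $I_1, I_2, \dots$ to be the copies of $I$ across the components of $\omega\cdot F$, with $r = |I|$, $s = |N(I)|$, all hypotheses of Theorem \ref{mainlb} are satisfied, and the theorem yields $\rho(\omega\cdot F) \geq \frac{b}{a-1} f(s/r) = f(|N(I)|/|I|)$. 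The $W$-vertices are handled inside Lemma \ref{algoh}: since each receives its own color class in $[a-1]$, the algorithm greedily places them in a single red shade with infinite common red neighborhoods, which is exactly the ``automatic extension'' your alternative gestures at.
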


If the independent set $I\subseteq V(F)$ that minimizes $|N(I)|/|I|$ is doubly independent, then Theorem \ref{mainub} and Theorem \ref{factornb} together give the exact value for $\rho(\omega\cdot F)$. This is always true in bipartite graphs, giving another reason why Corollary \ref{bipfac} holds. Figure \ref{nbex} shows four non-bipartite graphs $F$ for which this holds. If the graph $F$ does not contain any non-empty doubly independent sets (such as $K_3$), the following lower bound can be used:

\begin{figure}
\begin{centering}
\includegraphics[width=85mm]{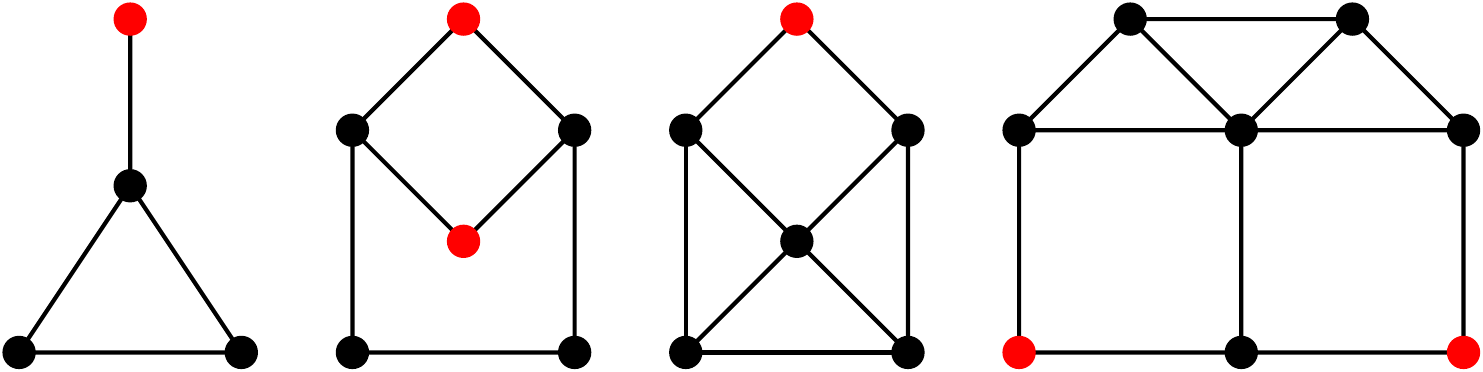}
\caption{Four non-bipartite graphs $F$ for which $\rho(\omega\cdot F)$ equals $f(1)$, $f(1)$, $f(2)$ and $f(3/2)$ respectively, with their doubly independent sets indicated.}
\label{nbex}
\par\end{centering}
\end{figure}

\begin{theo}\label{beslb} For every finite graph $F$, we have \[\rho(\omega\cdot F)\geq \frac{|V(F)|}{2|V(F)|-\alpha(F)}.\]\end{theo}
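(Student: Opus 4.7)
Let $n = |V(F)|$ and $\alpha = \alpha(F)$. The plan is to combine a finite-Ramsey estimate on vertex-disjoint copies of $F$ with a compactness argument to produce a dense monochromatic $\omega\cdot F$. The central input is the Ramsey-type bound
\[R(k\cdot F,\,k\cdot F) \leq (2n-\alpha)k + C_F,\]
valid for some constant $C_F = C(F)$ and all $k\geq 1$. Its tightness is witnessed by the extremal $2$-coloring that partitions the vertex set of $K_N$ into two classes in a ratio determined by $\alpha$ and $n-\alpha$, coloring the edges inside one class in one color and the remaining edges in the other; this is essentially the same family of extremal colorings appearing in the analogous upper bounds discussed earlier in the paper, adapted to the packing setting of Burr--Erd\H{o}s--Spencer.

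Second, I apply this finite Ramsey bound to initial segments of $\mathbb{N}$: in the fixed $2$-coloring of $K_\mathbb{N}$, for every $k$ the initial segment $[(2n-\alpha)k + C_F]$ contains a family $\mathcal{T}_k$ of $k$ pairwise disjoint monochromatic copies of $F$ in some color $c_k\in\{R,B\}$. By pigeonhole there is a color (say red) and an infinite set $K_R\subseteq\mathbb{N}$ such that $c_k = R$ for all $k\in K_R$. Each such $\mathcal{T}_k$ has vertex set of size $kn$ inside $[(2n-\alpha)k + C_F]$, giving density at least $kn/((2n-\alpha)k + C_F)$ in that initial segment, and this ratio tends to $n/(2n-\alpha)$ as $k\to\infty$ in $K_R$.

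Third, I extract a single infinite family $\mathcal{T}^*$ of pairwise disjoint red copies of $F$ from the sequence $(\mathcal{T}_k)_{k\in K_R}$ by a diagonal/K\H{o}nig-type compactness argument. Enumerate $K_R$ as $k_1<k_2<\cdots$ and consider a tree whose nodes at level $i$ are red $k_i$-packings of $F$-copies inside $[(2n-\alpha)k_i + C_F]$, with extension as the edge relation. At each level the node set is finite, so the tree is locally finite; one then iteratively refines the subsequence of $\mathcal{T}_{k_i}$'s to stabilize on a consistent family along the smallest available vertices, producing an infinite branch. The union of the families along this branch is an infinite red family $\mathcal{T}^*$ with $|V(\mathcal{T}^*)\cap [(2n-\alpha)k_i + C_F]|\geq k_i n$, yielding $\bar d(V(\mathcal{T}^*)) \geq n/(2n-\alpha)$ as required, and $\mathcal{T}^*$ is the desired red copy of $\omega\cdot F$.

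The main obstacle is twofold. The first, and more technical, is establishing the Ramsey bound $R(k\cdot F,\,k\cdot F)\leq (2n-\alpha)k + O(1)$ for arbitrary finite $F$; this is a delicate extremal-combinatorial estimate proved by a stability analysis showing that every near-extremal $2$-coloring of a large complete graph is close to the bipartite extremal construction, combined with a perturbation argument. The second obstacle is making the compactness extraction rigorous, since the naive product-topology limit of the $\mathcal{T}_k$'s need not preserve density at each initial segment; the careful diagonal construction sketched above, which controls which vertices are assigned to which $F$-copy level by level, resolves this point.
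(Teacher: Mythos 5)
Your second and third steps contain a genuine gap. The König/diagonal extraction does not work as sketched: the tree whose level-$i$ nodes are the $k_i$-packings inside $[(2n-\alpha)k_i + C_F]$, with ``extension'' as the edge relation, need not have an infinite branch, because a $k_{i+1}$-packing inside the larger interval has no reason to contain, as a subfamily, a $k_i$-packing that already lives inside $[(2n-\alpha)k_i + C_F]$. Concretely, all the $F$-copies in the $k_{i+1}$-packing could use vertices beyond $(2n-\alpha)k_i + C_F$, in which case the level-$(i+1)$ node has no predecessor at level $i$; König's lemma then gives nothing, and the alternative of passing to a limit of the sets $V(\mathcal{T}_{k_i})$ in the product topology can just as easily produce the empty set, since each $V(\mathcal{T}_{k_i})$ may be concentrated near the right end of its interval. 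Nothing in the finite Ramsey bound constrains \emph{where} inside the interval the packing sits, which is exactly the information the extraction step needs.

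The paper sidesteps this entirely, and more simply: it fixes an increasing sequence $n_1 < n_2 < \cdots$ with $n_{i+1}/n_i \to \infty$, applies the Burr--Erd\H{o}s--Spencer bound inside each disjoint interval $[n_i+1, n_{i+1}]$ to obtain a monochromatic packing $\mathcal{F}_i$ of $k_i = (\tfrac{1}{2|V(F)|-\alpha(F)} + o(1))\,n_{i+1}$ copies of $F$ of some color $C_i$, pigeonholes on $C_i$, and takes the union over the chosen color. Disjointness is automatic because the intervals are disjoint, and the density follows because $n_i = o(n_{i+1})$ makes the contribution of the first $i-1$ intervals negligible at the check-point $n_{i+1}$. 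This replaces the problematic compactness argument with a one-line construction. Two smaller points: the estimate $R(k\cdot F, k\cdot F) = (2|V(F)|-\alpha(F))k + O(1)$ is a known theorem of Burr, Erd\H{o}s and Spencer, which the paper simply cites; re-deriving it via a stability argument, as you propose, is unnecessary. Also, the extremal colorings behind that bound are not literally the same as the colorings used for the upper bounds elsewhere in the paper, though this is a cosmetic remark.
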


This theorem gives the best known lower bound for $\rho(\omega\cdot K_3)$. Combining Theorem \ref{beslb} with Theorem \ref{mainub} and \eqref{func} we obtain \[3/5\leq\rho(\omega\cdot K_3)\leq f(2)\leq \frac{21+\sqrt{12}}{33}\approx0.74133.\]

This paper is organized as follows: we prove the general upper bounds in Section \ref{sec:genubounds}, and the general lower bounds in Section \ref{sec:genlbounds}, besides a lemma that is left for Appendix \ref{annexlem} (the bulk of this proof is a rather long series of calculations without any interesting ideas behind). In Section \ref{sec:examples} we discuss the application of the general bounds to particular families of graphs, and obtain the remaining results above. In Section \ref{sec:openprob} we state some open questions, and in Appendix \ref{appf} we will prove some properties of $f(x)$.

\section{General upper bounds}\label{sec:genubounds}

We will prove two upper bounds in this section: the first one implies the upper bound from items \ref{compii} and \ref{compiii} from Theorem \ref{compmt}, while the second one is Theorem \ref{mainub}. In both cases we will construct a coloring of $E(K_\N)$ in which no dense monochromatic copy of $H$ exists.

\begin{theo}\label{compub} Let $H$ be a locally finite graph with chromatic number at least $a$, such that $V(H)$ is concentrated in at most $b$ components. Then $\rho(H)\leq b/(a-1)$.\end{theo}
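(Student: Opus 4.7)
The plan is to exhibit, for given $a \geq 2$ and $b$, an explicit two-coloring of $E(K_\N)$ in which every monochromatic copy of $H$ has upper density at most $b/(a-1)$. (For $a \leq 1$ the claim is vacuous.) I would partition $\N$ into the $a-1$ residue classes $V_1, \dots, V_{a-1}$ modulo $a-1$, so that each $V_j$ satisfies $|V_j \cap [n]|/n \to 1/(a-1)$, and color each edge of $K_\N$ red if its two endpoints lie in the same class $V_j$ and blue otherwise.

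This coloring is built so that blue is easily ruled out: the blue graph is $(a-1)$-partite with parts $V_1, \dots, V_{a-1}$, so every subgraph of it is properly $(a-1)$-colorable. Since $\chi(H) \geq a$, no blue copy of $H$ can occur, and the remaining task is to bound the density of red copies.

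For a red copy $H'$ with isomorphism $\phi : V(H) \to V(H')$, every edge of $H'$ has its endpoints in a common class, so connectivity forces $\phi$ to send each connected component of $H$ into a single $V_j$. Let $C_1, \dots, C_s$ with $s \leq b$ be the components of $H$ witnessing that $V(H)$ is concentrated in at most $b$ components, so all but finitely many vertices of $H$ lie in $C_1 \cup \dots \cup C_s$. If $\phi(C_i) \subseteq V_{j_i}$, then all but finitely many vertices of $H'$ lie in $V_{j_1} \cup \dots \cup V_{j_s}$, a union of at most $b$ residue classes. This union has upper density at most $b/(a-1)$, and adjoining a finite set does not affect the upper density, so $\bar d(V(H')) \leq b/(a-1)$.

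The argument is short and I do not foresee a real obstacle. The one step that needs a careful unpacking of definitions is using the connectivity of each $C_i$, together with the single-color restriction on edges, to ensure that $\phi(C_i)$ lies in exactly one class $V_{j_i}$; this is what replaces the a priori denominator $a-1$ by the smaller quantity $b$ and produces the desired ratio.
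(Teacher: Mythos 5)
Your proof is correct and follows essentially the same route as the paper: the same residue-class coloring modulo $a-1$ (red within a class, blue across classes), the same use of $\chi(H)\geq a$ to exclude blue copies, and the same observation that each component of $H$ must map into a single residue class so that the concentration hypothesis bounds the density by $b/(a-1)$.
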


\begin{proof} Consider the coloring of $K_\N$ in which the edge $uv$ is red iff $a-1$ divides $v-u$. The graph formed by the blue edges has chromatic number $a-1$, and thus does not contain $H$ as a subgraph. Every monochromatic copy of $H$ in this coloring is red.

The red graph consists of $a-1$ cliques $C_1, \dots, C_{a-1}$, each with $\bar d(C_i)=1/(a-1)$. The $b$ components that concentrate $V(H')$ must be each contained in a clique $C_i$. Because modifying finitely many elements does not affect the density of a set, we have $\bar d(H')\leq\bar d(C_1\cup\dots\cup C_b)=b/(a-1)$. We conclude that $\rho(H)\leq b/(a-1)$.
\end{proof}

Next we will prove Theorem \ref{mainub}. 
The intuition behind the construction to prove Theorem \ref{mainub} is as follows: suppose that we are trying to find a red copy of $H$. If we have a blue clique $K$ which has fewer than $k$ vertices neighboring $K$ through some red edge, and $\mu(H,t)=k$, then we know that fewer than $t$ vertices from $K$ can be in $H'$, because those vertices correspond to an independent set in $H$. Our goal is to find a construction that maximizes the number of vertices from $[n]$ that can be excluded from a potential red or blue $H'$ using this method.

\begin{proof}[Proof of Theorem \ref{mainub}]

Denote $\lambda=\liminf\limits_{n\rightarrow\infty}\frac{\mu(H,n)}{n}$. Let $\epsilon>0$. Let $g$ be a 1-Lipschitz function such that the upper limit in \eqref{optiprob} is less than $h(\gamma)+\epsilon$, for $\gamma=\frac{\lambda-1}{\lambda+1}$. Take an infinite set of vertices $v_1, v_2, \dots$, and arrange them from left to right in this order. Color these vertices red and blue, in such a way that among the $n$ leftmost vertices there are exactly $\lfloor(n+g(n))/2\rfloor$ red vertices (this is possible because $g$ is 1-Lipschitz). Form a two-colored complete graph by giving each edge the color of its leftmost endpoint.

There must be infinitely many vertices of each color. This is because otherwise one of the non-decreasing functions $\frac{x+g(x)}{2}$ or $\frac{x-g(x)}{2}$ is bounded, giving an absolute upper bound on $x\pm g(x)$. Then $\max\{\ell^+_\gamma(g,n), \ell^-_\gamma(g,n)\}=+\infty$ for every $n$ large enough.

Let the red vertices be $r_1, r_2, r_3, \dots$ and the blue vertices be $b_1, b_2, b_3, \dots$, according to the left-to-right order. Let $\alpha_i$ be the smallest value such that $r_{\alpha_i}$ has at most $\lambda(\alpha_i-i)$ blue vertices to its left, and $\beta_i$ the smallest value such that $b_{\beta_i}$ has at most $\lambda(\beta_i-i)$ red vertices to its left. The following discussion will not only prove the existence of $\alpha_i$ and $\beta_i$, but also give a bound on them.

Let $w=\frac{2}{1+\lambda}(\lambda i+2\lambda+2)$. Let $z^+=\Gamma^+_\gamma(g,w)$ and $z^-=\Gamma^-_\gamma(g,w)$. By continuity of $g$ and definition of $\Gamma^+_\gamma$ and $\Gamma^-_\gamma$, we have \[g(z^+)=w-\gamma z^+,\hskip 1cm g(z^-)=\gamma z^--w.\]

One can check that the following identity holds by substution of $g(z^-)$:

\[\frac{z^-+g(z^-)}{2}+2=\lambda\left(\frac{z^--g(z^-)}{2}-i-2\right).\] 

Among the $\lfloor z^-\rfloor$ leftmost vertices there are $\left\lfloor\frac{\lfloor z^-\rfloor+g(\lfloor z^-\rfloor)}{2}\right\rfloor$ red vertices and $\lfloor z^-\rfloor-\left\lfloor\frac{\lfloor z^-\rfloor+g(\lfloor z^-\rfloor)}{2}\right\rfloor$ blue vertices. Observe that

\begin{align*}\left\lfloor\frac{\lfloor z^-\rfloor+g(\lfloor z^-\rfloor)}{2}\right\rfloor\leq&\frac{\lfloor z^-\rfloor+g(\lfloor z^-\rfloor)}{2}
\leq \frac{z^-+g(z^-)}{2} <\lambda\left(\frac{z^--g(z^-)}{2}-i-2\right)\\ \leq&\lambda\left(\lfloor z^-\rfloor-\left\lfloor\frac{\lfloor z^-\rfloor+g(\lfloor z^-\rfloor)}{2}\right\rfloor-i\right).\end{align*}

If the last blue vertex among those $\lfloor z^-\rfloor$ is $b_{\tau}$, then the number of red vertices to its left is less than $\lambda(\tau-i)$, meaning that $\beta_{i}\leq\tau$. Hence

\begin{equation*}\label{betaineq}\beta_{i}\leq \lfloor z^-\rfloor-\left\lfloor\frac{\lfloor z^-\rfloor+g(\lfloor z^-\rfloor)}{2}\right\rfloor\leq \frac{z^--g(z^-)}{2}+2=\frac{1-\gamma}{2}z^-+\frac{w}{2}+2.\end{equation*}

Analogously, one has the identity \[\frac{z^+-g(z^+)}{2}+2=\lambda\left(\frac{z^++g(z^+)}{2}-i-2\right)\] and the inequality \[\lfloor z^+\rfloor-\left\lfloor\frac{\lfloor z^+\rfloor+g(\lfloor z^+\rfloor)}{2}\right\rfloor\leq\lambda\left(\left\lfloor\frac{\lfloor z^+\rfloor+g(\lfloor z^+\rfloor)}{2}\right\rfloor-i\right).\]

Hence we find \[\alpha_i\leq\left\lfloor\frac{\lfloor z^+\rfloor+g(\lfloor z^+\rfloor)}{2}\right\rfloor\leq\frac{ z^++g(z^+)}{2}=\frac{1-\gamma}{2}z^++\frac{w}{2}.\]

Adding the two values together, for $i$ large enough we have \begin{equation*}\alpha_i+\beta_i\leq\frac{1-\gamma}{2}(z^++z^-)+w+2\leq\left(\frac{1-\gamma}{2}h(\gamma)+\epsilon+1\right)\frac{2\lambda}{1+\lambda}i+o(i).\end{equation*}

Let $\phi:\mathbb{N}\rightarrow \{v_1, v_2, \dots\}$ be an arbitrary bijection satisfying $\phi([\alpha_j+\beta_j])=\{r_1, r_2, \dots, r_{\alpha_j}, b_1, b_2, \dots, b_{\beta_j}\}$ for every $j$. The function $\phi$ defines a coloring of $K_\N$, where the color of the edge $ij$ is the color of the edge $\phi(i)\phi(j)$.

Let $R$ and $B$ be the sets of positive integers $i$ whose image $\phi(i)$ is red or blue, respectively. Let $H'\subseteq K_\N$ be a monochromatic copy of $H$ in this coloring. Suppose that $H'$ is red. Let $n$ be a positive integer, and let $B_n=V(H')\cap[n]\cap B$. Because the vertices of $B_n$ form a monochromatic blue clique in our coloring of $K_\N$, the set $B_n$ must be independent in $H'$.

Let $j$ be the minimum value such that $\phi(B\cap [n])\subseteq\{b_1, b_2, \dots, b_{\beta_j}\}$. We claim first that there are at least $(1-O(\epsilon))j$ vertices in $[n]$ which do not belong to $H'$. Indeed, let $B'_n=V(H')\cap \{b_1, b_2, \dots, b_{\beta_{j-1}}\}\subseteq B_n$. From the construction of the coloring, the vertices that are connected to a vertex of $\{b_1, b_2, \dots, b_{\beta_{j-1}}\}$ through a red edge are precisely the red vertices to the left of $b_{\beta_{j-1}}$, of which there are at most $\lambda(\beta_{j-1}-(j-1))$. This means that $\mu(H,|B'_n|)\leq \lambda(\beta_{j-1}-(j-1))$. For $j$ large enough, this implies $|B'_n|\leq (1+o(1))(\beta_{j-1}-(j-1))$, and \[|[n]\setminus V(H')|\geq \beta_{j-1}-|B'_n|\geq (1+o(1))(j-1)-o(1)\beta_{j-1}.\]

Observe next that we cannot have $\beta_j=\beta_{j+1}$. This is because $b_{\beta_j-1}$, which is to the left\footnote{We cannot have $\beta_j=1$ for $j\geq 2$, because then $b_1$ would have at most $\lambda(1-j)<0$ red vertices to its left.} of $b_{\beta_j}$, has more than $\lambda((\beta_j-1)-j)=\lambda(\beta_j-(j+1))$ red vertices to its left. We thus have, by minimality of $j$, that $b_{\beta_{j+1}}\notin\phi([n])$, and by construction of $\phi$ we have $\phi([n])\subset\{r_1,r_2,\dots, r_{\alpha_{j+1}}, b_1, b_2, \dots, b_{\beta_{j+1}}\}$. This leads to the desired bound:

\[\frac{|V(H')\cap[n]|}{n}\leq 1-\frac{(1-o(1))j}{n}\leq 1-\frac{(1-o(1))j}{\alpha_{j+1}+\beta_{j+1}}\leq 1-\frac{1-o(1)}{\left(\frac{1-\gamma}{2}h(\gamma)+\epsilon+1\right)\frac{2\lambda}{1+\lambda}}\] which for $\epsilon$ small enough and $n$ large enough can take values arbitrarily close to $f(\lambda)$.

The case in which $H'$ is monochromatic blue is analogous. Indeed, besides the direction of the rounding, it is equivalent to taking the function $-g(x)$ instead of $g(x)$.
\end{proof}
\section{General lower bounds}\label{sec:genlbounds}

In this section we will prove three lower bounds. One is item \ref{compi} from Theorem \ref{compmt}, another is the lower bound of item \ref{compiii} in the same theorem, and the final one is the following, which will be used in the proof of Theorem \ref{mainthmbip} and Theorem \ref{factornb}:

\begin{theo}\label{mainlb} Let $H$ be a locally finite graph, $a,b,r,s$ be positive integers with $a>b$, and $\Psi:V(H)\rightarrow[a]$ be a proper coloring. Suppose that there exist infinitely many pairwise disjoint doubly independent sets $I_1, I_2, \dots$ in $H$, each contained in some component of $H$ and not concentrated in fewer than $b$ components, such that $|I_i|=r$, $|N(I_i)|\leq s$, and $\Psi(N(I_i))=a$. Then \[\rho(H)\geq\frac{b}{a-1}f\left(\frac sr\right).\]\end{theo}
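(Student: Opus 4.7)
The plan is to invert the strategy of the upper bound in Theorem~\ref{mainub}: given an arbitrary $2$-colouring of $E(K_\N)$, I would construct a monochromatic copy $H'$ of $H$ of density at least $\frac{b}{a-1}f(s/r)$. First, from the $2$-colouring I would extract a structural decomposition of an infinite subset of $\N$ via standard Ramsey-type arguments, passing to an infinite set $V_0 \subseteq \N$ on which the colouring is essentially determined by a colour assigned to each vertex in the leftmost-endpoint style used already in the proof of Theorem~\ref{mainub}. This produces a ``red'' infinite subset $R \subseteq V_0$ and a ``blue'' infinite subset $B \subseteq V_0$ whose counting functions give rise to an approximately $1$-Lipschitz discrepancy function $g$, which will play the role of the function $g$ in Definition~\ref{defif}.

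Next, I would use the proper $a$-colouring $\Psi$ to decompose $V(H)$ into classes $V_1, \dots, V_a$. The bulk of $V(H)$ can be placed into $a-1$ of these classes inside a single monochromatic clique, giving the factor $\frac{b}{a-1}$ since at most $b$ components of $H$ are allowed to sit in any one clique. The remaining class is the ``flexible'' one, whose vertices must be distributed across the red/blue split. This is where the doubly independent sets $I_1, I_2, \dots$ enter: because both $I_i$ and $N(I_i)$ are independent, the edges internal to either set impose no monochromaticity constraint, so one is free to place $I_i$ on one side of the split and $N(I_i)$ on the other, using the edges between $I_i$ and $N(I_i)$ (all of one colour) as the bridging edges. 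The hypothesis that $\Psi$ takes all $a$ values on $N(I_i)$ is what ensures that the bridging neighbourhoods can be matched to the existing classes in a way compatible with the global proper colouring.

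The density achieved by this embedding depends on how the positions of the $I_i$'s and $N(I_i)$'s distribute with respect to $g$. Since $|N(I_i)|/|I_i| \le s/r$, the expansion ratio $\lambda = s/r$ governs the balance between the two sides; applying the $\Gamma^+_\gamma$/$\Gamma^-_\gamma$ analysis of Definition~\ref{defif} with $\gamma = \frac{\lambda-1}{\lambda+1}$, the function $g$ will produce, via $h(\gamma)$, precisely the density $f(\lambda)$ within a single clique. Combining this with the factor $\frac{b}{a-1}$ from the component allocation yields the stated lower bound.

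The main obstacle will be turning this intuitive scheme into a rigorous construction: one must simultaneously handle component placement, proper-colouring compatibility, and density tracking in the presence of only approximate $1$-Lipschitz behaviour, and show that the worst case over all $2$-colourings still delivers density $\frac{b}{a-1}f(s/r)$. The technical step underlying the density computation---extracting the value $f(\lambda)$ from the optimal placement of the $I_i$'s and $N(I_i)$'s along the order dictated by $g$---is the counterpart, in the opposite direction, of the inequalities appearing in the proof of Theorem~\ref{mainub}, and is the step that I expect to require the long calculation the author defers to Appendix~\ref{annexlem}.
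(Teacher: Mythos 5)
Your proposal has a structural misconception that undermines the whole approach: you suggest that, given an \emph{arbitrary} $2$-colouring of $E(K_\N)$, one can pass to an infinite $V_0\subseteq\N$ on which the colouring is ``essentially determined by a colour assigned to each vertex in the leftmost-endpoint style.'' No Ramsey-type argument delivers this. Ramsey's theorem gives monochromatic cliques, not vertex-determined colourings, and any infinite set extracted by such arguments would in general have zero upper density in $\N$; since $\bar d$ is measured against all of $\N$, passing to such a $V_0$ destroys exactly the quantity we need to control. The leftmost-endpoint colouring is the \emph{adversary's} construction in the upper-bound proof; the lower bound must defeat every colouring, most of which carry no such structure.

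The paper's actual proof does extract a weak vertex-colouring structure, but it is of a very different and much weaker kind, and obtaining the density $f(s/r)$ is the hard part your sketch omits entirely. Concretely, the proof splits into three independent lemmas. Lemma~\ref{essscol} produces, from an arbitrary edge-colouring, an \emph{$a$-good colouring} of $V(K_\N)$ into $2a$ shades plus a finite residual set: within each shade the edge colours are still arbitrary, but common neighbourhoods of finite sets in appropriate shades are guaranteed to be infinite. This is the Elekes-style structure and is what lets the greedy embedding of $H$ terminate; it is not a reduction to a deterministic edge-colouring. Lemma~\ref{finddis} is the genuine density engine: it applies the Regularity Lemma to $[n]$ equipped with both the edge-colouring and the shade-colouring, then on the reduced cluster graph runs a max-flow/min-cut argument (Lemma~\ref{mfmc}) against a degree-profile function $g$ built from \emph{blue degrees of red vertices} (or vice versa) -- not a function chosen by us. The analytic Lemma~\ref{asymt}, which is what is deferred to Appendix~\ref{annexlem}, says that for \emph{any} such non-decreasing $g$ there is a scale $t$ at which the flow bound forces a set of $K_{r,s}^C$ components and isolated $C$-vertices covering at least an $f(s/r)$ fraction of $[t]$. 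So the function $g$ whose $\Gamma^\pm$ behaviour produces $f(\lambda)$ belongs to the colouring, and Lemma~\ref{asymt} is a worst-case statement over all $g$, dual to the optimization in Definition~\ref{defif}; your proposal has the logical quantifier running the wrong way. Finally, Lemma~\ref{algoh} uses the $a$-good colouring together with the doubly independent sets $I_i$ (embedded onto the $K_{r,s}^C$ components) and the proper colouring $\Psi$ (to route the remaining vertices through the shades via an oriented, finite-depth greedy procedure justified by K\"onig's lemma) to produce the actual copy of $H$, losing only the factor $b/(a-1)$. The role of $\Psi(N(I_i))=a$ is not about ``bridging neighbourhoods'' but about making the doubly independent sets compatible with the shade $R_a$ used to absorb the extra structure. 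None of this machinery -- $a$-good colourings, regularity, max-flow/min-cut, the analytic lemma -- appears in your sketch, and it cannot be replaced by reusing the upper-bound construction in reverse.
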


\begin{proof}[Proof of Theorem \ref{compmt}\ref{compi}]Let $\chi:E(K_\N)\rightarrow \{R,B\}$ be an edge-coloring. Let $\mathcal{F}$ be an inclusion-maximal family of pairwise disjoint monochromatic infinite cliques in $\chi$. Then $\N\setminus V(\mathcal{F})$ is finite, because otherwise by Ramsey's theorem there would be an infinite monochromatic clique in $\chi$ restricted to $\N\setminus V(\mathcal{F})$, contradicting the maximality of $\mathcal{F}$. Let $\mathcal{F}_R$ and $\mathcal{F}_B$ be the families of red and blue cliques in $\mathcal{F}$. Since $\bar d(V(\mathcal{F}_R)\cup V(\mathcal{F}_B))=1$, we have $\max\{\bar d(V(\mathcal{F}_R)),\bar d(V(\mathcal{F}_B))\}\geq 1/2$. Wlog assume $\bar d(\mathcal{F}_R))\geq 1/2$. We can suppose that $\mathcal{F}_R$ contains infinitely many cliques, because otherwise we can take one clique $K\in \mathcal{F}_R$ and divide it into infinitely many infinite cliques. Let $K_1, K_2, \dots$, be the cliques in $\mathcal{F}_R$. We can partition the vertex set of $H$ into infinitely many parts $S_1, S_2, \dots$, each of which is made up of infinitely many components of $H$. Now take any $\Phi:V(H)\rightarrow V(K_\N)$ which is a bijection from each $S_i$ to each $K_i$. The image of $H$ is a monochromatic graph $H'$ and $\bar d(V(H'))=\bar d(\mathcal{F}_R)\geq 1/2$.\end{proof}

The proof of Theorem \ref{compmt}\ref{compiii} and Theorem \ref{mainlb} will both be (partially) algorithmic: given a coloring $\chi:E(K_\N)\rightarrow\{R,B\}$, we will define an algorithm that constructs a dense monochromatic copy of $H$. The algorithms will be similar, so we will first prove Theorem \ref{mainlb} and then explain how to adapt the proof to Theorem \ref{compmt}\ref{compiii}.


Let $H$ be as in Theorem \ref{mainlb}, and let $\chi:E(K_\N)\rightarrow \{R,B\}$. Our goal is to find a copy of $H$ in $K_\N$ with density at least $b/(a-1)f(s/r)$. In order to find such a copy of $H$, it will be helpful to also color the vertices of $K_\N$, in a way that encodes information about how the vertices are connected through red or blue edges. The following coloring is a variant of one due to Elekes et al. \cite{ESSS}. We denote by $N_C(v)$ the set of vertices connected to $v$ through an edge of color $C$.

\begin{defi}Let $\chi:E(K_\N)\rightarrow\{R,B\}$ be a coloring, and let $a$ be a positive integer. An $a$-good coloring of $V(K_\N)$ is a partition $\N=\cup_{i=1}^a(R_i\cup B_i)\cup X$ into $2a+1$ classes (some of which might be empty), where $X$ is finite, with the following properties:
\begin{itemize}
\item For every color $C\in \{R,B\}$, every $1\leq i\leq a-1$ and every nonempty finite subet $S\subseteq C_i$, the set $\left(\cap_{v\in S}N_C(v)\right)\cap C_i$ is infinite.
\item For every color $C\in\{R,B\}$, every $1\leq i\leq a-1$ and every nonempty finite subet $S\subseteq C_a\cup\left(\cup_{j=i+1}^{a-1}\bar C_j\right)$, the set $\left(\cap_{v\in S}N_C(v)\right)\cap C_i$ is infinite.
\end{itemize}\end{defi}

We call each class $R_i$ a shade of red and each class $B_i$ a shade of blue. $X$ can be seen as a residual set, which can be removed without affecting the density of the graph. The choice of $a$ is related to the chromatic number of the monochromatic subgraphs that we can find in this graph. Indeed, say that we want to find a red clique of size $a$ containing $v\in R_i$. If $i\leq a-1$, then we can set $v=v_1$, and then greedily select $v_2, v_3, \dots, v_a\in R_i$, each adjacent to the previous ones through a red edge. If $i=a$, we can set $v=v_a$, and then greedily select $v_{a-1}, v_{a-2}, \dots, v_1$, with $v_j\in B_j$, each adjacent to the previous ones through a red edge.

We denote by $K_{r,s}^C$ a complete bipartite graph in which all edges have color $C$, all vertices in the part of size $s$ have color $C$ and all vertices in the part of size $r$ have color $\bar C$. These subgraphs will be used to embed the sets $I_i\cup N(I_i)$ in our colored graph.

The proof of Theorem \ref{mainlb} will have three main steps, which are captured by these lemmas:

\begin{lemma}\label{essscol} Let $\chi:E(K_\N)\rightarrow\{R,B\}$ be a coloring, and let $a$ be a positive integer. There exists an $a$-good coloring in which  at least two of $(R_a\cup B_{a-1})$, $(B_a\cup R_{a-1})$ and $X$ are empty.
\end{lemma}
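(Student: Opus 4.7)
I would prove Lemma \ref{essscol} by an iterated Ramsey construction. Starting with $Y_0=\N$, at each round $i=1,\dots,a-1$ I apply Ramsey's theorem inside the current reservoir $Y_{i-1}$ to extract infinite monochromatic red and blue cliques, declare them $R_i$ and $B_i$, and pass to an infinite subreservoir $Y_i$ consisting of vertices whose red neighborhood in $R_i$ and blue neighborhood in $B_i$ are both infinite (by pigeonhole on the remaining infinite set). An infinite monochromatic $C$-clique automatically satisfies the internal closure condition of the definition. For the external closure on larger finite $S$, I would diagonalize over the countably many triples $(C,i,S)$: at the appropriate stage, further thin the reservoir so that $\bigcap_{v\in S}N_C(v)\cap C_i$ is infinite, which remains possible since each earlier thinning left an infinite set.

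The two-empty condition relies on a closure implication: if $C_{a-1}=\emptyset$ for some $C\in\{R,B\}$, then the external condition applied to the triple with that $C$, $i=a-1$ and $S\subseteq C_a$ demands an infinite common $C$-neighborhood inside the empty set $C_{a-1}$, which forces $C_a=\emptyset$. Using this it suffices to arrange one of: (a) $B_{a-1}=R_a=X=\emptyset$, which by the implication also forces $B_a=\emptyset$ and therefore empties both $R_a\cup B_{a-1}$ and $X$; (b) the symmetric variant with the roles of red and blue swapped; or (c) all of $R_{a-1},B_{a-1},R_a,B_a$ empty (with $X$ a finite possibly nonempty leftover), which empties both $R_a\cup B_{a-1}$ and $B_a\cup R_{a-1}$. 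At the final round I would either extract an extra red clique to serve as $R_{a-1}$ and absorb the entire residue into it to reach (a), extract an extra blue clique to reach (b), or fall back to (c) if neither color allows absorption.

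The main obstacle is the absorption step at the final round: to place an infinite residue entirely into a single monochromatic class, each absorbed vertex must be color-consistent with the current class in the strong sense that adding it preserves the internal Ramsey-closure (so that every finite subset containing the new vertex still has an infinite common monochromatic neighborhood in the class). I would handle this by an iterative greedy extension with reservoir thinning, choosing each new vertex from the intersection of the relevant common neighborhoods inside the residue, in the style of the Elekes--Soukup--Soukup--Szentmikl\'ossy construction. A case split on the color returned by Ramsey at the final stage, together with the fallback to (c) when neither color supports full absorption, guarantees that at least one of the three configurations (a), (b), (c) is reachable and hence the lemma holds.
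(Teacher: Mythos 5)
Your plan has a genuine gap at precisely the point you flag as the ``main obstacle'', and the flagged mitigation does not close it. In an $a$-good coloring, the only places an arbitrary infinite set of vertices can land without having to satisfy the internal Ramsey-closure condition are $R_a$, $B_a$, and (finitely) $X$. Your three target configurations (a), (b), (c) all end up with $R_a=B_a=\emptyset$, which forces the entire infinite residue of your iterated-Ramsey construction to be absorbed into some class $C_i$ with $i\le a-1$, i.e.\ a class that must have the internal closure property. This is impossible in general: already for $a=2$, configurations (a) and (b) would force $\N$ to be a single class $R_1$ or $B_1$ with red- or blue-closure respectively, and (c) would force $\N$ to be finite. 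The iterative greedy ``extension with reservoir thinning'' you propose can add compatible vertices to a class one at a time, but it discards incompatible ones, and you never say where those go. The paper's construction avoids this entirely: it colors \emph{every} vertex of the current reservoir by the basic (greedy Ramsey-style) coloring, lets the dominant color form $C_i$ (which is not a clique but does have closure), and --- crucially --- at the final step dumps the whole non-dominant remainder into $\bar{C}_a$, which carries no internal closure requirement. The resulting coloring typically does have $B_a\neq\emptyset$ (or $R_a\neq\emptyset$), a case your analysis excludes.

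Separately, your ``closure implication'' $C_{a-1}=\emptyset \Rightarrow C_a=\emptyset$ is derived literally from the displayed external condition in the definition, but that condition appears to contain an index/color typo: as written it would declare the paper's own construction not $a$-good (e.g.\ for $a=2$ with $R_1,B_2$ nonempty and $B_1=\emptyset$, the $(C=B,i=1)$ instance fails). Reconciling the definition with both the proof of Lemma~\ref{essscol} and its use in Lemma~\ref{algoh} gives instead the implication $C_{a-1}=\emptyset \Rightarrow \bar{C}_a=\emptyset$, under which your case (a) no longer deduces $B_a=\emptyset$. So even aside from the absorption problem, the case analysis built on that implication does not cover the configuration the algorithm actually produces. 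What is needed, and what you are missing, is a construction that realizes exactly one of: terminating with a finite leftover in $X$; terminating with $R_{a-1}$ filled and the remainder dumped into $B_a$; or the symmetric option with $B_{a-1}$ and $R_a$. Your cliques-plus-absorption scheme cannot reach the latter two.
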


\begin{lemma}\label{finddis} Let $\chi:E(K_\N)\cup V(K_\N)$ be a coloring, and let $r,s$ be positive integers. There exists a color $C$ and a subgraph $W\subseteq K_\N$, with $\bar d(W)\geq f(s/r)$, in which every component is either an isolated vertex with color $C$, or a $K_{r,s}^C$. Furthermore, if $V(K_\N)$ is further subdivided into finitely many shades, then $W$ can be taken in a way that each $K_{r,s}^C$ only uses one shade of each color. \end{lemma}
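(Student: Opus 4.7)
The goal is to construct $W$ as a vertex-disjoint union of copies of $K_{r,s}^C$ and isolated $C$-vertices for one color $C \in \{R, B\}$. The plan is to run, in parallel, two greedy extraction procedures, one aiming to build $W_R$ (using only $K^R_{r,s}$-gadgets and isolated reds) and one building $W_B$, and at the end to select whichever color yields the larger density. Begin with the degenerate case: if one color class is finite, say $B$, take $C = R$ and let $W$ consist of all red vertices as isolated components, giving $\bar d(W) = 1 \geq f(s/r)$. Assume henceforth that both color classes are infinite, set $R(n) = |R \cap [n]|$ and $B(n) = |B \cap [n]|$, and let $g \colon [0, +\infty) \to \mathbb{R}$ be the linear interpolation of $g(n) = R(n) - B(n)$; this is $1$-Lipschitz with $g(0) = 0$. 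Setting $\lambda = s/r$ and $\gamma = (\lambda-1)/(\lambda+1)$ one checks
\[
\gamma n + g(n) = \tfrac{2}{\lambda+1}\bigl(\lambda R(n) - B(n)\bigr), \qquad \gamma n - g(n) = \tfrac{2}{\lambda+1}\bigl(\lambda B(n) - R(n)\bigr),
\]
so that $\Gamma^+_\gamma(g, t)$ and $\Gamma^-_\gamma(g, t)$ are the first indices at which the surpluses $\lambda R - B$ and $\lambda B - R$ reach $\tfrac{(\lambda+1)t}{2}$; these approximate the earliest times enough vertices of one color have accumulated to host the $\bar{C}$-side of $t$ gadgets of gadget color $R$ or $B$ respectively.

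Each $W_C$-construction processes vertices in order and maintains, for every pair of shades $(R_i, B_j)$, a pool of unassigned vertices of the colors appearing in $K^C_{r,s}$ from those shades; each new vertex is added to every compatible pool. As soon as a pool contains enough vertices that a bipartite Ramsey-type lemma guarantees a $K^C_{r,s}$ with all $rs$ cross-edges of color $C$, such a gadget is extracted, added to $W_C$, and its vertices are removed from every pool. All unassigned $C$-vertices at time $N$ are added to $W_C$ as isolated components; unassigned $\bar C$-vertices are discarded. The shade constraint is enforced automatically since pools are indexed by shade pair, and the residual pool size at any time is $O(1)$.

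For the density bound, fix $\epsilon > 0$ and pick infinitely many $t$ with $\Gamma^+_\gamma(g, t) + \Gamma^-_\gamma(g, t) \leq (h(\gamma) + \epsilon)\, t$, which exist by the definition of $h(\gamma)$. For $N$ slightly exceeding this sum, a counting argument based on the identity
\[
\tfrac{1}{1 - f(s/r)} = \tfrac{2\lambda}{(1+\lambda)^2}\, h(\gamma) + \tfrac{2\lambda}{1+\lambda}
\]
shows that either $W_R$ or $W_B$ covers at least $f(s/r)\,N - o(N)$ vertices. Passing to a subsequence of such $N$ yields $\max(\bar d(W_R), \bar d(W_B)) \geq f(s/r)$, and the winning color gives the desired $W$.

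The main obstacle is the Ramsey extraction: we require a $K^C_{r,s}$ with cross-edges of the specified color $C$, not merely some monochromatic $K_{r,s}$ in an unspecified color. I would address this by refining each shade pair via a further pigeonhole step, so that the bipartite edge coloring between any pair of sub-shades becomes essentially monochromatic in a preferred color; then whichever $C$ is chosen upfront, the bipartite subgraph between the selected sub-shades is dense enough in color $C$ for gadget extraction. The Ramsey threshold then depends only on $r, s$ and the (finite) number of sub-shades, and the error introduced by this further refinement can be absorbed into the slack $\epsilon$.
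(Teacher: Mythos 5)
Your proposal has a fundamental gap: the function $g$ you build from the data, $g(n) = R(n) - B(n)$, depends only on the vertex coloring and carries no information at all about the edge coloring $\chi|_{E}$. But the density of $W$ that can be extracted is governed by how the edge coloring interacts with the vertex coloring --- specifically, by whether the bipartite graph between red and blue vertices contains enough $C$-colored complete bipartite subgraphs --- and this is not visible from vertex counts alone. Concretely, if the vertex coloring is balanced ($R(n)\approx B(n)$) and $\lambda = 1$, your $g$ is approximately $0$ and your $\Gamma^\pm_\gamma(g,t)$ are enormous for any $t>0$, which bears no relation to the actual extractable density (with cooperating edges one could reach density $1$). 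The paper's proof uses a very different $g$: it sorts the red vertices by their \emph{blue degree} $d_B(v)$ (the number of blue vertices joined to $v$ by a blue edge) and takes $g$ to be the resulting non-decreasing degree sequence. That quantity \emph{is} a measure of how much blue structure versus red structure is forced by the edge coloring, and it is this $g$ that is fed into the optimization lemma (Lemma~\ref{asymt}) after a max-flow/min-cut analysis (Lemmas~\ref{mfmc},~\ref{findflow}) on a reduced graph obtained from the regularity lemma.

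Your extraction step is also not justified. You correctly flag that bipartite Ramsey produces a monochromatic $K_{r,s}$ of an \emph{unspecified} color, not a $K_{r,s}^C$ for the $C$ you have committed to, but your proposed fix --- a pigeonhole refinement of the shade classes so that the bipartite edge coloring becomes ``essentially monochromatic'' --- is not a thing one can do: an arbitrary two-coloring of a complete bipartite graph cannot be made close to monochromatic by refining the vertex classes (a random edge coloring stays roughly half-and-half on every large refined pair). The paper sidesteps this by applying the regularity lemma, so that each regular pair has density either $0$ or at least $d$ in \emph{each} color; the auxiliary coloured graph on clusters is then fed to the flow lemma, which decides which cluster pairs should be used to build $K_{r,s}^C$ copies, and inside a chosen pair the K\H{o}v\'ari--S\'os--Tur\'an theorem supplies copies of $K_{r,s}^C$ because the density in colour $C$ is bounded below. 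Both the choice of $g$ and the extraction mechanism thus need to be replaced.
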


\begin{lemma}\label{algoh} Let $\chi:E(K_\N)\rightarrow\{R,B\}$ be an edge-coloring, let $a\geq a'\geq b$ be positive integers. Let $\N\rightarrow \{R_1, \dots, R_a, B_1, \dots, B_a, X\}$ be an $a$-good coloring in which at most $a'$ shades of each color are non-empty. Let $W\subseteq K_\N$ be a subgraph in which every component is either an isolated vertex with color $C$, or a $K_{r,s}^C$ which uses only one shade of each color. Under the conditions of Theorem \ref{mainlb}, there exists a monochromatic $H'\subseteq K_\N$ of color $C$, $H'\simeq H$, with $\bar d(H')\geq b/a'\bar d(W)$.\end{lemma}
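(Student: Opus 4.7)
The plan is an algorithmic embedding that exploits simultaneously the density of $W$ and the rich common-neighbor structure of the $a$-good coloring. The construction has three conceptual stages: (i) selection of $b$ components of $H$ to carry the density; (ii) a pigeonhole assignment of shades aligning the $\Psi$-coloring of $H$ with the shades used by the components of $W$; and (iii) a greedy vertex-by-vertex embedding.

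Stage (i) is a short greedy argument based on the non-concentration hypothesis. Starting with any component $D_1$ of $H$ containing infinitely many of the $I_i$'s (such a component exists because the family is infinite and each $I_i$ lies in a single component), and iterating, after $D_1,\dots,D_{k-1}$ have been chosen the hypothesis supplies infinitely many $I_i$'s avoiding these components, and some further component $D_k$ must host infinitely many of them. Writing $C_1,\dots,C_{a'}$ and $\bar C_1,\dots,\bar C_{a'}$ for the non-empty shades, stage (ii) chooses for each $j\in[b]$ a distinct shade $\bar C_{\beta_j}$ of $\bar C$ (the destination for the $I_i$-vertices in $D_j$, placed on $r$-sides of $K_{r,s}^C$-components of $W$), together with an auxiliary shade assignment for the remaining $\Psi$-colors within each $D_j$ (routing the $\Psi$-color-$a$ vertices through the shade $C_a$). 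A pigeonhole over the $a'$ possible $\bar C$-shades selects the $\beta_j$'s so as to maximize the density of usable $K_{r,s}^C$-components of $W$, achieving (together with the isolated $C$-vertices absorbed into the scheme) a combined density of at least $(b/a')\bar d(W)$.

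Stage (iii) is a greedy vertex-by-vertex construction, processed in decreasing order of $\Psi$-color and interleaving pieces of $D_1,\dots,D_b$. At each step the next vertex of $H$ is assigned to an unused vertex of $K_\N$ in the shade dictated by the scheme: (a) if it belongs to some $I_i$, it is sent to the $r$-side of a reserved $K_{r,s}^C$-component, whose $\bar C_{\beta_j}$-shade comes with the $C$-edges to $N(I_i)$ already built in by $W$; (b) otherwise the vertex is sent to its designated shade using the two conditions of the $a$-good coloring to supply the required $C$-edges to the already-embedded neighbors. The double independence of $I_i$ is essential here: it permits the re-assignment of the shade of an $I_i$-vertex to $\bar C_{\beta_j}$ without introducing a conflicting edge in $H$, since no edge of $H$ joins two $I_i$-vertices or two $N(I_i)$-vertices. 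Components of $H$ outside $D_1,\dots,D_b$ are embedded last into the remaining capacity of $K_\N$ and contribute nothing to the density.

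The main obstacle is the book-keeping required to maintain, at every finite stage, the invariant that each shade still has infinitely many admissible targets for the next vertex. This forces the shade plan of stage (ii) to be carefully calibrated to the indexing conditions in the $a$-good coloring: placing the next vertex into a shade $C_i$ with $i<a$ requires the already-embedded neighbors to lie in $C_i$, $C_a$, or some $\bar C_j$ with $j>i$, and the scheme is designed so that this holds at every step across all $b$ components $D_j$ simultaneously. The hypothesis $a'\geq b$ provides exactly enough distinct shades to run the scheme. Once the invariant is sustained, the density bound is immediate: $V(H')$ covers the vertex set of the chosen $K_{r,s}^C$-components and the absorbed isolated $C$-vertices, of total density at least $(b/a')\bar d(W)$ by stage (ii).
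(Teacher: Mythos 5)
Your overall architecture (choose $b$ components of $H$, pigeonhole over shades, then a greedy embedding that alternates with absorbing components of $W$) matches the paper's, but the shade bookkeeping --- which is the actual content of the lemma --- does not work as you describe it. The central problem is where the vertices of $N(I_i)$ go. They must land on the $s$-sides of the $K_{r,s}^C$-components of $W$, and those sides carry whatever $C$-shade $W$ happens to give them; in the intended application $C_a$ is empty, so these shades are various $C_k$ with $k\leq a-1$. Your plan of ``routing the $\Psi$-colour-$a$ vertices through the shade $C_a$'' is therefore inconsistent with the fact that $\Psi(N(I_i))=a$. Worse, once an $N(I_i)$-vertex sits in some $C_k$ with $k<a$, the only tool the $a$-good coloring offers for continuing the embedding from it is the first bullet (common $C$-neighbourhoods \emph{inside} $C_k$); the second bullet only applies to sets contained in $C_a\cup\bar C_{k+1}\cup\dots\cup\bar C_{a-1}$. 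So the entire rest of that component of $H$ is forced into the single shade $C_k$, and consequently the correct pigeonhole is over the $C$-shade of the $s$-side (which is also the shade of the isolated $C$-vertices, so they are grouped automatically), not over the $\bar C$-shade of the $r$-side as you propose: two $K_{r,s}^C$'s sharing an $r$-side shade but with different $s$-side shades cannot both serve one component $D_j$. The paper's $\kappa$ assigns each component of $H$ one $C$-shade $j$ and embeds all its non-$I$ vertices into $C_j$, reserving the $\Psi$-layered scheme (through $\bar C_1,\dots,\bar C_{a-1}$ and $C_a$) for the single value $\kappa=a$; this is also why a global ``decreasing order of $\Psi$'' pass is avoided --- the paper only processes in decreasing $\Psi$-order the finite set of vertices reachable from the current vertex in the $\Psi$-increasing orientation, which is finite by local finiteness and K\"onig's lemma, whereas an order by $\Psi$ over all of $H$ is not an $\omega$-ordering.

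Two smaller gaps. In stage (i), ``not concentrated in fewer than $b$ components'' does not yield $b$ components each hosting infinitely many $I_i$'s: the sets could be spread over infinitely many components with finitely many each. The paper splits into two cases (finitely many versus infinitely many components meeting the family) and in the latter assigns infinitely many components to each chosen shade. And the final density bound is not ``immediate'': a greedy embedding only covers finitely much of $W$ at each finite stage, so one must either arrange to cover \emph{all} components of the selected shades eventually, or do what the paper does --- pass to a subsequence with $n_{i+1}/n_i\to\infty$ and show that the block of $W$-components meeting $[n_i]$ but not $[n_{i-1}]$ already carries density $(b/a')\bar d(W)-o(1)$ at scale $n_i$, so that covering infinitely many such blocks suffices.
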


It is straightforward to combine these three lemmas to deduce Theorem \ref{mainlb}:

\begin{proof}[Proof of Theorem \ref{mainlb}] Let $\chi:E(K_\N)$ be given. Apply Lemma \ref{essscol} to this edge-coloring to obtain an $a$-good coloring with at most $a-1$ shades of each color are non-empty. Assign the color red to the vertices in $X$. Apply Lemma \ref{finddis} to obtain $C$ and $W$. Remove from $W$ every component which uses a vertex of $X$ (this does not affect $\bar d(W)$ because it only removes finitely many vertices). By Lemma \ref{algoh}, we can find a monochromatic $H'\subseteq K_\N$ with $\bar d(H')\geq b/(a-1)\bar d(W)\geq b/(a-1)f(s/r)$.\end{proof}

\begin{proof}[Proof of Lemma \ref{essscol}]
For each vertex $v$, we will denote by $c(v)$ and $s(v)$ the color that we assign to it, respectively. The color assigned to a vertex might change while the algorithm is running, but the shade of each vertex is final once assigned and it will match the color that the vertex has at that time. 

At some points, the shade assigning algorithm will call the basic coloring algorithm to color an infinite set $V=\{v_1, v_2, \dots\}$ of vertices. We will first describe this algorithm. First, the color $c(v_1)$ is assigned so that $N_{c(v_1)}(v_1)\cap V$ is infinite. Once the colors of $v_1, \dots, v_{i-1}$ have been assigned, assuming that $\left(\cap_{i=1}^{n-1}N_{c(v_i)}(v_i)\right)\cap V$ is infinite, the color $c(v_n)$ is chosen so that $\left(\cap_{i=1}^{n}N_{c(v_i)}(v_i)\right)\cap V$ is infinite.

The coloring produced satisfies that $\left(\cap_{i=1}^{n}N_{c(v_i)}(v_i)\right)\cap V$ is infinite for every $n$. We say that a color $C$ is dominant in this coloring if, for every $n$, $\left(\cap_{i=1}^{n}N_{c(v_i)}(v_i)\right)\cap V$ contains infinitely many vertices $v$ with $c(v)=C$. Observe that at least one of the colors is dominant.

Now we define the shade assigning algorithm:

\begin{enumerate}
\item For every $v\in\N$, start with $c(v)$ and $s(v)$ unassigned.
\item If finitely many vertices $v$ remain with $s(v)$ unassigned, assign $s(v)=X$, and END.
\item Let $V$ be the set of vertices without a shade. Color $V$ with the basic coloring algorithm. Choose a color $C$ that is dominant. Let $i$ be the minimum value such that $C_i$ is empty. For every $v\in V$ with $c(v)=C$, set $s(v)=C_i$.
\item If $i=a-1$, set $s(v)=\bar C_a$ for every $v\in V$ with $c(v)=\bar C$, and END. If $i\neq a-1$, return to Step 2.
\end{enumerate}

The algorithm runs the loop $2-4$ at most $2a-3$ times before ending. Whenever a set $C_i$ with $i\leq a-1$ is defined, the color $C$ is dominant in the corresponding coloring, meaning that in particular $\left(\cap_{v\in S}N_C(v)\right)\cap C_i$ is infinite for every finite non-empty $S\subseteq C_i$, as it is a superset of the color $C$ vertices of $\left(\cap_{i=1}^nN_{c(v_i)}(v_i)\right)\cap V$ for $n$ large enough. For the same reason, for any finite subset $S$ of vertices whose shade is not assigned when $C_i$ is defined, we have that $\left(\cap_{v\in S}N_{\bar C}(v)\right)\cap C_i$ is infinite. If $C_a$ is defined at some point in the algorithm (namely at the end), then $\bar C_1, \bar C_2, \dots, \bar C_{a-1}, C_a$ are defined in this order. This proves that the coloring that we obtained is $a$-good.

To conclude the proof of Lemma \ref{essscol}, simply observe that $X$ is nonempty only if the algorithm terminates at Step 2, the set $(R_a\cup B_{a-1})$ is nonempty only if the algorithm terminates at Step 4 with $C=B$ and $(B_a\cup R_{a-1})$ is nonempty only if the algorithm terminates at Step 4 with $C=R$.\end{proof}

The proof of Lemma \ref{finddis} divides $K_\N$ into infinitely many finite graphs, and then combines the regularity lemma and a max flow/min cut argument, to reduce the problem to an optimization problem equivalent to \eqref{optiprob}. We will now state the lemmas that we will need for this:

\begin{lemma}[Regularity Lemma \cite{KS96}]\label{regularity}
	For every $\epsilon>0$ and $m_0, \ell\geq 1$ there exists $M = M(\epsilon,m_0,\ell)$ such that the following holds. Let $G$ be a graph on $n  \geq M $ vertices whose edges are coloured in red and blue and let $d>0$.
	Let $\{W_i\}_{i \in [\ell]}$ be a partition of $V(G)$. Then there exists a partition $\{V_0, \dots, V_m\}$ of $V(G)$ and a subgraph $H$ of $G$ with vertex set $V(G) \setminus V_0$ such that the following holds:
	\begin{enumerate}
		\item $m_0 \leq m \leq M$;
		\item $\{V_i\}_{i \in [m]}$ refines $\{W_i\cap V(H)\}_{i \in [\ell]}$;
		\item $|V_0| \leq \epsilon n$ and $|V_1| = \dots = |V_m| \leq \lceil\epsilon n \rceil$;
		\item $\deg_{H}(v) \geq \deg_G(v)-(d+\epsilon)n$ for each $v \in V(G) \setminus V_0$;
		\item $H[V_i] $ has no edges for $i \in [m]$;
		\item all pairs $(V_i,V_j)$ are $\epsilon$-regular and with density either 0 or at least $d$ in each colour in $H$.
	\end{enumerate}
\end{lemma}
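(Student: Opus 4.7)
The plan is to reduce the statement to the classical Szemer\'edi Regularity Lemma followed by a standard cleanup that produces the ``degree form'' claimed above. First I would introduce the energy (mean-square density) of a partition $\mathcal{P}=\{V_1,\dots,V_m\}$ by
$$q(\mathcal{P})=\frac{1}{n^2}\sum_{i,j}|V_i|\,|V_j|\bigl(d_R(V_i,V_j)^2+d_B(V_i,V_j)^2\bigr),$$
where $d_R,d_B$ denote the densities in the red and blue subgraphs. This quantity is non-negative and bounded above by $2$, and crucially it splits as $q_R+q_B$.

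Starting from the initial partition $\{W_i\}_{i\in[\ell]}$, subdivided minimally into equitable pieces so that the number of classes is at least $m_0$, I would then run the classical iterative refinement argument: whenever more than an $\epsilon$-fraction of pairs $(V_i,V_j)$ fail to be $\epsilon$-regular in at least one colour, the usual Cauchy--Schwarz/defect computation applied to the summand of the offending colour yields a refinement with $q$ strictly larger, by at least some $\delta(\epsilon)>0$. Because $q\le 2$, the loop terminates after at most $2/\delta(\epsilon)$ rounds, and the number of classes is bounded by a function $M(\epsilon,m_0,\ell)$. At this point I have a partition that refines $\{W_i\}$ (property~(2)) and is simultaneously $\epsilon$-regular in both colours.

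Third, I would upgrade the partition to the stated degree form. I cut each $V_i$ into blocks of size exactly $\lceil\epsilon n\rceil$ and throw leftovers into an exceptional set $V_0$; then define $H$ by deleting every edge inside a part, every edge crossing an $\epsilon$-irregular pair, and every edge of colour $C$ crossing a pair with $d_C(V_i,V_j)<d$. These deletions cost at most $(d+\epsilon)n$ neighbours per vertex \emph{on average}, so a double counting step lets me dump into $V_0$ the vertices losing more, while preserving $|V_0|\le\epsilon n$. Properties (3)--(6) then hold by construction.

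The main technical obstacle is the two-colour version of the energy-increment step: one must confirm that a pair that is $\epsilon$-irregular in just \emph{one} of the two colours still produces the required jump in $q$. This is where the splitting $q=q_R+q_B$ matters: the standard argument gives the increment in $q_C$ for the offending colour $C$, and since refining a partition can only increase $q_{\bar C}$, the total energy grows as well. A secondary nuisance is ensuring that equitability and the refinement of $\{W_i\}$ are compatible at the start; this is arranged by choosing the initial subdivision so that every $W_i$ is already split into equal pieces of a common size before entering the refinement loop.
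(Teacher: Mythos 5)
The paper does not prove this lemma at all; it is quoted verbatim as a known result (the citation is to the Koml\'os--Simonovits survey), namely a two-coloured ``degree form'' of the Szemer\'edi Regularity Lemma with a prescribed initial partition. So there is no internal proof in the paper to compare your outline against, and citing it is all the paper needs.

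That said, your sketch is the standard route and is broadly sound (the observation that $q=q_R+q_B$ and that refining can only increase $q_{\bar C}$ is exactly the right way to handle irregularity in a single colour), but two points in the cleanup step would need repair. First, cutting each class ``into blocks of size exactly $\lceil\epsilon n\rceil$'' is the wrong direction: the classes coming out of the energy-increment loop already have common size about $n/m$, and condition~(3) is secured simply by entering the loop with $m_0\geq 1/\epsilon$, so that $n/m\leq\epsilon n$; no further subdivision is required, and subdividing to size $\lceil\epsilon n\rceil$ would, if anything, coarsen the partition. Second, and more substantively, condition~(4) is a \emph{uniform} bound for every $v\in V(G)\setminus V_0$, not an average, and it cannot be recovered by ``dumping into $V_0$ the vertices losing more'': the problematic contribution is the edges lost across $\epsilon$-irregular pairs, and a single class can a priori lie in up to $m-1$ of them, so averaging over vertices does not control the maximum. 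The standard fix is to run the loop with $\epsilon'\ll\epsilon$ and then move an \emph{entire class} into $V_0$ whenever it participates in more than $\epsilon m$ irregular pairs; since there are at most $\epsilon' m^2$ irregular pairs, at most $(\epsilon'/\epsilon)m\leq\epsilon m$ classes are discarded, $|V_0|\leq\epsilon n$ is preserved, and every surviving vertex loses at most $\epsilon n$ edges to irregular pairs, at most $dn$ to low-density pairs, and at most $\epsilon n$ inside its own class, giving~(4) after a harmless reparametrisation. With those two adjustments your sketch closes to a complete proof of the cited statement.
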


The max flow-min cut result that we will use can be seen as a weighted version of K\"onig's Theorem:

\begin{lemma}\label{mfmc} Let $G$ be a finite bipartite graph on $V=(X,Y)$, and let $r,s$ be positive integers. There exists a unique value of $D$ for which both of these exist:
\begin{itemize}
\item A function $h:E(G)\rightarrow \N\cup\{0\}$ such that $\sum_{e\ni v}h(e)\leq r$ if $v\in X$, $\sum_{e\ni v}h(e)\leq s$ if $v\in Y$ and $\sum_{e\in E(G)}h(e)=D$.
\item A vertex cover $Z$ of $G$ such that $r|Z\cap X|+s|Z\cap Y|=D$.
\end{itemize}\end{lemma}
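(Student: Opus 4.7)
The plan is to deduce Lemma \ref{mfmc} from the integral max-flow min-cut theorem. I would build an auxiliary flow network on $V(G) \cup \{S,T\}$ with a source $S$ joined to each $x \in X$ by an arc of capacity $r$, each $y \in Y$ joined to a sink $T$ by an arc of capacity $s$, and each edge $xy \in E(G)$ replaced by an arc $x \to y$ of infinite capacity (or, equivalently, any integer capacity exceeding $r|X|$, which upper-bounds the total possible flow). Since all finite capacities are integers, the integral max-flow min-cut theorem produces an integer maximum flow whose value $D$ equals the minimum cut.

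The maximum flow yields the function $h$ of the statement: setting $h(xy)$ equal to the flow along the arc $x \to y$, the capacity constraints at the arcs incident to $S$ and $T$ translate directly into $\sum_{e\ni v} h(e) \leq r$ for $v \in X$ and $\sum_{e\ni v} h(e) \leq s$ for $v \in Y$, while the value of the flow is $\sum_{e} h(e) = D$. For the cut, a finite $S$-$T$ cut cannot use the infinite-capacity arcs, so it is parameterised by a pair $A \subseteq X$, $B \subseteq Y$ with $N(A) \subseteq B$ (where $A$ is the source-side of $X$ and $B$ is the source-side of $Y$), and has capacity $r|X \setminus A| + s|B|$. Setting $Z = (X \setminus A) \cup B$ produces a vertex cover of $G$, and conversely any vertex cover $Z$ arises this way via $A = X \setminus Z$ and $B = Z \cap Y$; hence the minimum cut equals the minimum of $r|Z \cap X| + s|Z \cap Y|$ over vertex covers $Z$, providing a cover $Z$ that attains weight $D$.

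For uniqueness of $D$, I would exhibit the standard duality inequality: for any feasible $h$ and any vertex cover $Z$,
\[\sum_{e \in E(G)} h(e) \;\leq\; \sum_{v \in Z}\sum_{e \ni v} h(e) \;\leq\; r|Z \cap X| + s|Z \cap Y|,\]
where the first inequality uses that every edge has at least one endpoint in $Z$. If a value $D'$ is attained simultaneously by some $h$ and some cover $Z$, this chain forces equality throughout, so $D'$ equals both the maximum flow and the minimum cut, which coincide; hence $D' = D$. The only real obstacle is the clerical check matching finite $S$-$T$ cuts with weighted vertex covers under the bipartite structure, together with invoking integrality of the maximum flow to guarantee that $h$ takes values in $\N \cup \{0\}$ rather than in $\mathbb{Q}_{\geq 0}$; no combinatorial input beyond max-flow min-cut is required.
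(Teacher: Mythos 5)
Your proof takes essentially the same approach as the paper's: both construct the identical auxiliary flow network (source to $X$ with capacity $r$, $Y$ to sink with capacity $s$, infinite-capacity arcs across edges of $G$), apply the integral max-flow min-cut theorem, and identify finite cuts with weighted vertex covers. The paper is slightly terser, leaving the uniqueness argument implicit in the phrase "Our lemma follows from the Ford-Fulkerson theorem," whereas you spell out the duality inequality, but the underlying reasoning is the same.
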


\begin{proof}
Take an orientation of every edge in $G$ from $X$ to $Y$, and give it an infinite capacity. Connect every vertex in $X$ to a source $\sigma$ through an edge with capacity $r$, and every vertex in $y$ to a sink $\tau$ through an edge with capacity $s$. Let $D$ be the maximum flow in this network. $D$ is the maximum value for which a function $h$ as in the statement exists (by the integrality theorem, there exists a maximum flow in which the flow of every edge is an integer). $D$ is also the minimum value for which a cut $(C_1, C_2)$ with $\sigma\in C_1$ and $\tau\in C_2$ exists. Observe that $(C_1, C_2)$ is a cut with finite capacity iff $(C_2\cap X)\cup (C_1\cap Y)$ is a vertex cover of $G$, in which case the capacity of the cut is $r|C_2\cap X|+s|C_1\cap Y|$. Our lemma follows from the Ford-Fulkerson theorem.
\end{proof}

The next lemma that we will introduce requires the definition of two parameters, which up to a change of coordinates are equivalent to $\Gamma^+_\gamma$ and $\Gamma^-_\gamma$.
\begin{defi} Let $g:[0,+\infty)\rightarrow[0,+\infty)$ be a continuous, non-decreasing function. Let $\lambda, t$ be positive real numbers. We define the following two parameters: \[\ell_\lambda^+(g,t)=\min\left\{x:g(\lambda x)-x\geq t\right\}\hskip 1cm \ell_\lambda^-(g,t)=\min\left\{x:x-\frac{g(x)}{\lambda}\geq t\right\},\]where we take the minimum of the empty set to be $+\infty$.\end{defi}

\begin{lemma}\label{asymt} For $\lambda,\epsilon>0$ there exists $\gamma>0$ with the following property: for every non-decreasing continuous function $g:[0,+\infty)\rightarrow [0, +\infty)$ with $g(0)=0$ and every $m>0$ there exists $t\in[\gamma m, m]$ such that \[\frac{\ell_\lambda^+(g,t)+\ell_\lambda^-(g,t)}{t}\geq\frac{f(\lambda)}{1-f(\lambda)}-\epsilon.\] \end{lemma}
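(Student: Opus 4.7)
The plan is to transform the statement into an equivalent one involving $\Gamma_\beta^\pm$ for $\beta = (\lambda-1)/(\lambda+1)$ (I write $\beta$ to avoid a clash with the $\gamma$ in the lemma statement), and then contradict the infimum definition of $h(\beta)$ through a scale-invariance plus compactness argument. The change of variables is a $-\pi/4$ rotation of the graph of $g$: given $g$ as in the statement, set $X(x)=(x+g(x))/\sqrt 2$ and $Y(x)=(g(x)-x)/\sqrt 2$. Since $g'\geq 0$, $X(x)$ is non-decreasing, so $Y$ can be written as $G(X)$ with $|G'|\leq 1$ and $G(0)=0$ (extended linearly with slope in $[-1,1]$ if the range of $X$ is bounded). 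The rotation sends the lines $y=x/\lambda + t$ and $y=\lambda(x-t)$, which define $\ell_\lambda^+$ and $\ell_\lambda^-$, to lines of slopes $-\beta$ and $\beta$, which define $\Gamma_\beta^+$ and $\Gamma_\beta^-$. Using $\beta+1=2\lambda/(\lambda+1)$ and $\beta-1=-2/(\lambda+1)$, a direct calculation gives, for $\tau=\lambda t\sqrt 2/(\lambda+1)$,
\[\frac{\ell_\lambda^+(g,t)+\ell_\lambda^-(g,t)}{t}=\frac{2\lambda}{(\lambda+1)^2}\cdot\frac{\Gamma_\beta^+(G,\tau)+\Gamma_\beta^-(G,\tau)}{\tau}+\beta.\]
Since $f(\lambda)/(1-f(\lambda))=\frac{2\lambda}{(\lambda+1)^2}h(\beta)+\beta$ (from Definition \ref{defif}), and the range $t\in[\gamma m,m]$ translates linearly to a range of the same ratio for $\tau$, the lemma is equivalent to: for every $\beta\in(-1,1)$ and $\epsilon'>0$ there exists $\gamma>0$ such that for every 1-Lipschitz $G:[0,\infty)\to\mathbb{R}$ with $G(0)=0$ and every $M>0$, some $\tau\in[\gamma M,M]$ satisfies $(\Gamma_\beta^+(G,\tau)+\Gamma_\beta^-(G,\tau))/\tau\geq h(\beta)-\epsilon'$.

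Assume, toward a contradiction, that this reformulation fails for some $\epsilon_0'>0$: one obtains sequences $\gamma_n\to 0$, $M_n>0$ and 1-Lipschitz $G_n$ with $G_n(0)=0$ whose ratio is below $h(\beta)-\epsilon_0'$ throughout $[\gamma_n M_n,M_n]$. The ratio is scale-invariant under $G\mapsto cG(\cdot/c)$, $\tau\mapsto c\tau$, so I rescale to $M_n=1$. The family $\{G_n\}$ is equicontinuous and uniformly bounded on every compact, so Arzel\`a--Ascoli with a diagonal extraction yields a subsequence converging uniformly on compact sets to a 1-Lipschitz $G_\infty$ with $G_\infty(0)=0$. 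At every $\tau>0$ where $\Gamma_\beta^\pm(G_\infty,\cdot)$ is continuous (the complement is at most countable, by monotonicity), uniform convergence gives $\Gamma_\beta^\pm(G_n,\tau)\to\Gamma_\beta^\pm(G_\infty,\tau)$; passing to the limit (and filling in the countable exceptional set by left- or right-continuity) yields $(\Gamma_\beta^+(G_\infty,\tau)+\Gamma_\beta^-(G_\infty,\tau))/\tau\leq h(\beta)-\epsilon_0'$ for every $\tau\in(0,1]$.

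It remains to build from $G_\infty$ a single 1-Lipschitz $G^*:[0,\infty)\to\mathbb{R}$ with $G^*(0)=0$ and $\limsup_{\tau\to\infty}(\Gamma_\beta^+(G^*,\tau)+\Gamma_\beta^-(G^*,\tau))/\tau\leq h(\beta)-\epsilon_0'$, contradicting $h(\beta)=\inf_G\limsup_\tau$ in \eqref{optiprob}. The natural construction glues rescaled copies of $G_\infty$: fix $c>1$ large, and on each interval $[c^k, c^k(1+A)]$ (with $A$ chosen to cover the relevant range of $\Gamma_\beta^\pm(G_\infty,\cdot)$ on $(0,1]$) set $G^*$ to a translated rescaled copy of $G_\infty$ by factor $c^k$, filling the gaps $[c^k(1+A),c^{k+1}]$ with 1-Lipschitz linear pieces whose slopes are tunable in $[-1,1]$. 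By scale-invariance each copy would contribute the bound $\leq h(\beta)-\epsilon_0'$ on a corresponding window of $\tau$. The main obstacle, and the source of the long calculation the authors defer to the appendix, is controlling the interaction between scales: one must choose the translation offsets $G^*(c^k)$ and the gap slopes so that for $\tau$ in the window around scale $c^k$ the minimizers defining $\Gamma_\beta^\pm(G^*,\tau)$ land inside the $k$-th copy rather than being dragged into an earlier interval where $\beta x\pm G^*(x)$ happens to exceed $\tau$ because of the padding. This is arranged by ensuring that $\beta c^k+G^*(c^k)$ and $\beta c^k-G^*(c^k)$ grow strictly with $k$ and that $\beta x\pm G^*(x)$ is monotone across every seam, so that the bound on $G_\infty$ transfers to $G^*$ at every sufficiently large scale up to a vanishing error, producing the required contradiction.
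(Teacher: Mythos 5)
Your change of variables is correct and parallels the paper's: the paper sets $z(x)=g(y)-y$ with $x=g(y)+y$ (your $G$, up to the cosmetic $\sqrt 2$), shows $z$ is $1$-Lipschitz, and derives the same arithmetic (your identity is in fact an equality because $g\ge 0$ forces $y\mapsto g(y)+y$ to be a bijection onto $[0,\infty)$; the paper only records the inequality $x_t\ge\Gamma_\beta^+(z,\tfrac{2\lambda}{1+\lambda}t)$, which is all it needs). Your reduction to the Lipschitz reformulation therefore matches the paper's Lemma~\ref{rotat} exactly.

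The gap is in the compactness step, and it comes from your choice of normalization. You rescale so that $M_n=1$, which makes the bad interval $[\gamma_n M_n,M_n]$ converge to $(0,1]$; after Arzel\`a--Ascoli you therefore get a limit $G_\infty$ with $(\Gamma_\beta^+(G_\infty,\tau)+\Gamma_\beta^-(G_\infty,\tau))/\tau\le h(\beta)-\epsilon_0'$ only for $\tau\in(0,1]$. But $h(\beta)$ is defined via a $\limsup$ as $\tau\to\infty$, so a bound on $(0,1]$ alone does not contradict anything, and you are forced into the multi-scale gluing of copies of $G_\infty$. That gluing is genuinely delicate (you would need to control, for every large $\tau$ including those in the linear ``gap'' regions, that both $\Gamma_\beta^\pm(G^*,\tau)$ stay small, which constrains the slopes and offsets at every seam simultaneously and is not established by the monotonicity remarks you sketch), and you do not complete it. The paper avoids the entire issue by rescaling so that $m_i=\sqrt i$: then the bad interval becomes $[1/\sqrt i,\sqrt i]$, which exhausts $(0,\infty)$ as $i\to\infty$, and the Arzel\`a--Ascoli limit $g$ satisfies the bad bound for \emph{every} $\tau>0$ (up to a countable exceptional set handled by semicontinuity). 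This directly forces $\limsup_{\tau\to\infty}(\Gamma_\beta^+(g,\tau)+\Gamma_\beta^-(g,\tau))/\tau\le h(\beta)-\epsilon_0'<h(\beta)$, contradicting the infimum in \eqref{optiprob} with no gluing at all. (As a side note, the ``long calculation deferred to the appendix'' is the change-of-variables verification you already carried out, not a gluing argument; the paper's Lemma~\ref{rotat} proof is short once the normalization is chosen correctly.) To repair your proof, replace the rescaling $M_n=1$ by one that sends $[\gamma_n M_n,M_n]$ to an interval whose endpoints go to $0$ and $\infty$ respectively, and the rest of your argument goes through unchanged.
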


The proof of Lemma \ref{asymt} can be found in the Appendix. Combining Lemma \ref{mfmc} and Lemma \ref{asymt}, we can obtain the following:

\begin{lemma}\label{findflow} For every $\epsilon, r,s>0$ there exists $\gamma,\eta>0$ and $N$ for which the following hold: for every graph $G$ on $[n]$, with $n>N$ and $\delta(G)\geq (1-\eta)n$, and for every total coloring $\chi:V(G)\cup E(G)\rightarrow \{R,B\}$, there exists $t\in[\gamma n,n]$, a color $C$, and $h:E(G)\rightarrow \N\cup\{0\}$, such that the following hold: 
\begin{itemize}
\item For every edge $e=uv$, if $g(e)>0$ then $\chi(e)=C$ and $\chi(u)\neq \chi(v)$.
\item $\sum\limits_{e\ni v}h(e)\leq r$ for every $v$ with $\chi(v)=C$ and $\sum\limits_{e\ni v}h(e)\leq s$ for every $v$ with $\chi(v)=\bar C$.
\item $\frac{|C\cap [t]|}t+\frac{\sum_{v\in (\bar C\cap[t])}\sum_{e\ni v} h(e)}{st}\geq f(s/r)-\epsilon$
\end{itemize} \end{lemma}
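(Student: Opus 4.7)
The plan is to recast condition (3) via max-flow/min-cover duality, sweep $t$ across the prefix, and invoke Lemma \ref{asymt} (with $\lambda = s/r$) to locate a threshold $t$ and a color $C$ at which the desired bound holds.

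Step 1 (per-$t$ flows). Fix $\eta$ small enough that the at most $\eta n$ missing edges per vertex perturb any relevant density by at most $O(\eta) \ll \epsilon$. For each $t \in [n]$ and each $C \in \{R,B\}$, apply Lemma \ref{mfmc} to the bipartite graph on $(C \cap [t],\, \bar C \cap [t])$ whose edges are the $C$-colored edges of $G$, with capacities $r$ on the $C$-side and $s$ on the $\bar C$-side. This produces an integer flow $h_{C,t}$ of maximum value $D_{C,t}$, which already satisfies conditions (1) and (2) of the lemma, together with a dual minimum-weight vertex cover $(Z_C, Z_{\bar C})$ of cost $D_{C,t} = r|Z_C| + s|Z_{\bar C}|$.

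Step 2 (reformulation). Setting $A_{\bar C} = (\bar C \cap [t]) \setminus Z_{\bar C}$, a short computation gives
\[
Q_C(t) := \frac{|C \cap [t]|}{t} + \frac{D_{C,t}}{st} \;=\; 1 - \frac{|A_{\bar C}|}{t} + \frac{r|Z_C|}{s\,t},
\]
so condition (3), namely $Q_C(t) \geq f(\lambda) - \epsilon$, is equivalent to $|A_{\bar C}| - |Z_C|/\lambda \leq t(1 - f(\lambda) + \epsilon)$. By König-type duality this left-hand side coincides with $\max_{S \subseteq \bar C \cap [t]}\bigl(|S| - |N^C(S) \cap C \cap [t]|/\lambda\bigr)$, a purely graph-theoretic quantity recording the best ``bad'' subset of $\bar C$ for color $C$ at time $t$.

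Step 3 (invoking Lemma \ref{asymt}). Construct a continuous non-decreasing function $g:[0,\infty) \to [0,\infty)$ with $g(0)=0$ that records, as a function of $x$, a combined cover profile of the two colors, chosen so that the asymmetric inverses $\ell^+_\lambda(g,\cdot)$ (involving $g(\lambda x) - x$) and $\ell^-_\lambda(g,\cdot)$ (involving $x - g(x)/\lambda$) track the obstructions for $C = R$ and $C = B$, respectively. Lemma \ref{asymt} then supplies some $t^* \in [\gamma n, n]$ with $\ell^+_\lambda(g,t^*) + \ell^-_\lambda(g,t^*) \geq t^*\bigl(f(\lambda)/(1-f(\lambda)) - \epsilon'\bigr)$; translated via the identity of Step~2, this bound excludes the simultaneous failure of $Q_R(t^*) \geq f(\lambda) - \epsilon$ and $Q_B(t^*) \geq f(\lambda) - \epsilon$, so at least one color provides the required flow.

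\textbf{Main obstacle.} The delicate point is the precise construction of $g$ in Step~3. Lemma \ref{asymt} supplies only one function with two asymmetric inverses, yet each must track a full cover configuration for one color, with the asymmetry of $\ell^\pm_\lambda$ matching the swap of capacities $r \leftrightarrow s$ under $C \leftrightarrow \bar C$. A plausible construction interpolates the discrete data $t \mapsto (|Z_{R,t}|, |Z_{B,t}|)$ and smooths it into a monotone continuous function; aligning the resulting generalized inverses with $\ell^+_\lambda$ and $\ell^-_\lambda$ so that the König-dual identity of Step~2 yields exactly the required inequality is where the bulk of the technical work lies. The remaining accounting—propagating the missing-edge slack $\eta$, the rounding from integer cover sizes to a continuous $g$, and the conversion between the $\epsilon'$ supplied by Lemma~\ref{asymt} and the target $\epsilon$—is routine provided $\eta$ is chosen small relative to $\epsilon, r, s$.
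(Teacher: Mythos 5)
Your overall architecture is right (duality via Lemma~\ref{mfmc} plus a sweep controlled by Lemma~\ref{asymt}), and your Step~2 reformulation of condition (3) in terms of a ``worst independent deficiency'' is a correct dual reading. But you flag the real gap yourself: what function $g$ to feed into Lemma~\ref{asymt}. Your candidate --- interpolating the cover-size data $t\mapsto(|Z_{R,t}|,|Z_{B,t}|)$ --- is not what works, and it is unclear how two per-$t$ cover profiles could be fused into a single non-decreasing $g$ whose two inverses $\ell^\pm_\lambda(g,\cdot)$ separately track the two colors. The paper's choice is different and much cleaner: $g$ is built \emph{once}, independent of any $t$, as the piecewise-linear interpolation of the \emph{sorted blue-degree sequence of the red vertices} (i.e.\ $g(k)=d_k$ where $d_k$ is the $k$-th smallest number of blue neighbors, through blue edges, of a red vertex). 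The asymmetry between $\ell^+_\lambda$ (involving $g(\lambda x)-x$) and $\ell^-_\lambda$ (involving $x-g(x)/\lambda$) then corresponds exactly to the two cases of which color is scarce in $[t]$, once $t$ is chosen as a fixed multiple of the $\tau$ produced by Lemma~\ref{asymt}.

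This also reverses your order of operations. The paper does \emph{not} precompute flows for every $t$ and both colors; it first builds the degree profile $g$, applies Lemma~\ref{asymt} to pin down one threshold $\tau$ and then $t$, and observes that either $|R\cap[t]|<\ell^-_\lambda(g,\tau)$ or $|B\cap[t]|\le\ell^+_\lambda(g,\tau)+\tau$. Only then is Lemma~\ref{mfmc} invoked --- \emph{once}, in the relevant case --- and, notably, on a bipartite graph where only one side is restricted to $[t]$ (in Case~1, blue edges between $R\cap[t]$ and all of $B$; in Case~2, red edges between all of $R$ and $B\cap[t]$), not the fully truncated bipartition $(C\cap[t],\bar C\cap[t])$ you propose. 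The contradiction is then extracted from the dual cover: if the max flow were too small, a cover of small cost would force many red vertices to have small blue degree, which translated through $g$ would place $\ell^-_\lambda(g,\tau)$ (resp.\ $\ell^+_\lambda(g,\tau)$) below the count we already bounded, a contradiction. So: your skeleton is sound, but the missing ingredient is precisely the identification of $g$ as the sorted degree profile rather than a cover profile, and without it the plan does not close.
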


\begin{proof}

Let $\lambda=s/r$. For every red vertex $v$, we define its blue degree $d_B(v)$ as the number of blue vertices $w$ such that $vw$ is blue. Let $v_1, v_2, \dots, v_{|R|}$ be the set of red vertices, sorted from smallest to largest blue degree, and let $d_i=d_B(v_i)$. Define additionally $d_0=0$ and $d_k=d_{|R|}$ for $k>|R|$. Let $g:[0,+\infty)\rightarrow[0,+\infty)$ be the function that satisfies $g(k)=d_k$ for every integer $k$ and which is linear between every pair of consecutive integers. 

From Lemma \ref{asymt}, we can find $\gamma_1$, depending only on $\lambda$ and $\epsilon$, such that there exists $\tau\in \left[\gamma_1\frac{1-f(\lambda)}{1+\frac{\lambda}{1-f(\lambda)}}n, \frac{1-f(\lambda)}{1+\frac{\lambda}{1-f(\lambda)}}n\right]$ for which $\frac{\ell^+_\lambda(g,\tau)+\ell^-_\lambda(g,\tau)}{\tau}\geq\frac{f(\lambda)}{1-f(\lambda)}-\epsilon$. Let $t=\left(\frac{1}{1-f(\lambda)}-\epsilon\right)\tau$. Then either $|R\cap[t]|< \ell^-_\lambda(g,\tau)$ or $|B\cap[t]|\leq \ell^+_\lambda(g,\tau)+\tau$. We consider both cases:

\textbf{Case 1:} $|R\cap [t]|<\ell^-(g,\tau)$. Let $R'=R\cap[t]$. Let $G'$ be the graph of blue edges in $G$ between $R'$ and $B$. Let $h$, $Z$ and $D$ be as in Lemma \ref{mfmc} applied to $G'$, with $X=B$ and $Y=R'$. Suppose that $D\leq s(|R'|-\tau)$. Every vertex $v\in R'\setminus Z$ must have all its blue neighbors in $B\cap Z$, and so $d_B(v)\leq |B\cap Z|$. Therefore 
\[d_{|R'|- |Z\cap R'|}\leq |Z\cap B|=\frac{D-s|Z\cap R'|}{r}\leq \frac sr(|R'|-|Z\cap R'|-\tau).\] 
Setting $x=|R'|-|Z\cap R'|$, this expression rearranges to $x-\frac{g(x)}\lambda\leq \tau$, so by definition of $\ell^-_\lambda$ this means that $x\geq \ell^-_\lambda(g,\tau)$. But this is a contradiction, because $x\leq|R'|<\ell^-_\lambda(g,\tau)$. This means that we have $D> s(|R'|-\tau)$, and
\[\frac{|B\cap [t]|}{t}+\frac{D}{st}\geq \frac{t-|R'|}{t}+\frac{s(|R'|-\tau)}{st}=1-\frac\tau t=1-\frac{1}{\frac{1}{1-f(\lambda)}-\epsilon}\geq f(\lambda)-\epsilon.\]

\textbf{Case 2:} $|B\cap [t]|\leq\ell^+(g, \tau)+\tau$. Let $B'=B\cap[t]$. Let $G'$ be the graph of red edges between $R$ and $B'$. Let $h$, $Z$ and $D$ be as in Lemma \ref{mfmc} applied to $G'$, with $X=R$ and $Y=B'$. Suppose that $D<s(|B'|-\tau-\eta n-\frac1\lambda)$. Every edge between $R\setminus Z$ and $B'\setminus Z$ is blue. Every vertex $v$ has at most $\eta n$ vertices to which it is not connected, and so $d_B(v)\geq |B'\setminus Z|-\eta n$ for all\footnote{What if $R\setminus Z= \emptyset$ (if this happens we cannot guarantee $d_{|R\cap Z|+1}\geq |B'\setminus Z|$)? Then $r|R|\leq D\leq s|B'|\leq st\leq s\frac{\tau}{1-f(\lambda)}\leq s\frac{1-f(\lambda)}{\lambda}n\Rightarrow |R|\leq(1-f(\lambda))n\Rightarrow |B|\geq f(\lambda)n$, and thus taking $t'=n$, $h=0$ and $C=B$ is enough for Lemma \ref{findflow}.} $v\in R\setminus Z$. 
\begin{align*}d_{|R\cap Z|+1}\geq& |B'|-|B'\cap Z|-\eta n\geq |B'|-\frac{D-r|R\cap Z|}{s}-\eta n\\ =&\frac{s|B'|-D}{s}+\frac1\lambda|R\cap Z|-\eta n\geq\tau+\eta n+\frac1\lambda+\frac1\lambda|R\cap Z|-\eta n\\ \geq&\tau+\frac1\lambda(|R\cap Z|+1).\end{align*} 
Setting $x=\frac1\lambda(|R\cap Z|+1)$, this expression rearranges to $g(\lambda x)-x\geq\tau$, so by definition of $\ell^+_\lambda$ this means that $x\geq\ell^+_\lambda(g,\tau)$. On the other hand, $x=\frac{|R\cap Z|+1}{\lambda}\leq\frac{D}{s}+\frac1\lambda<|B'|-\tau-\eta n-\frac1\lambda+\frac1\lambda<|B'|-\tau\leq \ell^+_\lambda(g,\tau)$, which is a contradiction. This means that we have $D\geq s(|B'|-\tau-\eta n-\frac1\lambda)$, and \[\frac{|R\cap [t]|}{t}+\frac{D}{st}\geq\frac{t-|B'|}{t}+\frac{s(|B'|-\tau-\eta n-\frac1\lambda)}{st}\geq 1-\frac\tau t-\frac\eta\gamma-\frac1{\lambda\gamma n}\geq f(\lambda)-\epsilon\] for $\gamma=\gamma_1\frac{1-f(\lambda)}{1+\frac{\lambda}{1-f(\lambda)}}$, $\eta<<\gamma\epsilon$ and $n>N>>\frac{1}{\gamma\lambda\epsilon}$.\end{proof}

To prove Lemma \ref{finddis}, we apply the regularity lemma to the graph and use Lemma \ref{findflow}. We also use the fact that, by the K\H{o}v\'ari-S\'os-Tur\'an theorem, every large enough dense bipartite graph contains a large complete bipartite subgraph:

\begin{proof}[Proof of Lemma \ref{finddis}] We first claim that, for every $\epsilon>0$, there exists $\gamma(\epsilon)>0$ and $N(\epsilon)$ such that, for every $n>N$, there exist $t\in [\gamma n,n]$, a color $C$ and a subgraph $\mathcal{F}\subseteq K_\N$ contained in $[n]$ in which every component is either an isolated vertex of color $C$ or a $K_{r,s}^C$ using only a shade of each color, with \[\frac{|V(\mathcal{F})\cap [t]|}{t}\geq f\left(\frac sr\right)-\epsilon.\]

Fix $\epsilon>0$. Let $G$ be the restriction of our coloring to $K_\N$. Let $a$ be the total number of shades (from both colors). Let $\rho,\delta>0$ be very small real numbers, whose value we will define later. Take a partition of $[n]$ into $\ell=a\lceil\rho^{-1}\rceil$ parts $\{Z_1, \dots, Z_\ell\}$, such that each $Z_i$ is contained in one shade, and $\max Z_i-\min Z_i<\rho n$. Applying Lemma \ref{regularity} to $G$ with $d=2\delta$, we find $M(\delta, \ell, \ell)$, a subgraph $H\subseteq G$ and a partition $[n]=\{V_0, V_1, \dots, V_m\}$, with $\ell\leq m\leq M$, as in the statement of Lemma \ref{regularity}, replacing $\epsilon$ with $\delta$.

We suppose that the labeling of the parts is such that $\min V_1<\min V_2<\dots<\min V_m$. We define an auxilliary graph $H'$ as follows: the vertex set is $[m]$. The color of every vertex $i$ is the same as the color of each of its vertices in $G$. Between any two vertices $ij$, we draw an edge if the bipartite graph $V_iV_j$ is nonempty in $H$, and we color it in the most dense color in $V_iV_j$.

Let $y=|V_1|=\dots=|V_m|$. Then $\frac{(1-\delta)n}{m}\leq y\leq \frac{n}{m}$. The minimum degree in $H'$ is at least $(1-\frac{4\delta}{1-\delta})m$. Indeed, given $i$ and $v\in V_i$, we have $d_{H'}(i)\geq\frac{d_H(v)-\delta n}{y}\geq \frac{d_{G}(v)-4\delta n}{y}\geq(1-\frac{4\delta}{1-\delta})m$.

Apply Lemma \ref{findflow} to $H'$, with parameters $\epsilon/2, r,s$ to obtain $\gamma, \eta>0$, $N'$, $\tau\in [\gamma m,m]$, a color $C$ and a function $h:E(H')\rightarrow \N$ as in the statement of Lemma \ref{findflow}, replacing $t$ with $\tau$. Our value of $\rho$ will be chosen so that $\ell>N'$, and $\delta$ will be chosen so that $\frac{4\delta}{1-\delta}<\eta$ (note that the values of $\eta$ and $\gamma$ depend only on $\epsilon, r$ and $s$). Subdivide each  $V_i$ with color $C$ into $r$ parts $V_{i,1},\dots, V_{i,r}$, each of size at least $\lfloor y/r\rfloor$, and each $V_i$ with color $\bar C$ into $s$ parts $V_{i,1},\dots, V_{i,s}$, each of size at least $\lfloor y/s\rfloor$. Construct a matching $\mathcal{M}$ of pairs $(V_{i,k}, V_{j,k'})$, where for any fixed values of $i$ and $j$, the number of pairs $(V_{i,k},V_{j,k'})$ in $\mathcal{M}$ is $h(ij)$.

Within each pair $(V_{i,k}, V_{j,k'})$, where $V_i$ has color $C$ and $V_j$ has color $\bar C$, find a maximum family $\mathcal{F}_{i,k,j,k'}$ of disjoint copies of $K_{r,s}^C$. If $N$ is large enough compared to $\delta^{-1}$ and $M$, and therefore $\delta y$ is large enough, then $\min\{|V_{i,k}\setminus V(\mathcal{F}_{i,k,j,k'})|,|V_{j,k'}\setminus V(\mathcal{F}_{i,k,j,k'})|\}<\delta y$. That is because otherwise the bipartite graph between $V_{i,k}\setminus V(\mathcal{F}_{i,k,j,k'})$ and $V_{j,k'}\setminus V(\mathcal{F}_{i,k,j,k'})$ would have density at least $
\delta$ in the edges of color $C$, and for $\delta y$ large enough this implies the existence of a copy of $K_{r,s}^C$, which would contradict the maximality of $\mathcal{F}_{i,k,j,k'}$.

Let $\mathcal{F}$ be the union of all families $\mathcal{F}_{i,k,j,k'}$. Let $t=\min V_\tau$. We will now bound $\frac{|(V(\mathcal{F})\cup C)\cap[t]|}{[t]}$. If $v\geq t+\rho n$, and $v\in V_i$ with $i\neq 0$, then $\min V_i>\max V_i-\rho n\geq v-\rho n\geq t=\min V_\tau$, and thus $i>\tau$. This means that $|(\cup_{i=1}^\tau V_i)\setminus[t]|\leq\rho n$, and $t\geq \tau y-\rho n\geq \frac{(1-\delta)\tau n}{m}-\rho n$. On the other hand, if $v\leq t$ then either $v\in V_0$ or $v\in V_i$ with $\min V_i\leq v\leq t=\min V_\tau$, and thus $i\leq \tau$. This implies that $t\leq \sum_{i=0}^\tau|V_i|\leq \delta n+\tau y\leq \delta n+\frac{\tau n}m$. 

Every $V_i$ with color $C$ and $i\in [\tau]$ will trivially be contained in $(V(\mathcal F)\cup C)\cap (\cup_{i=1}^\tau V_i)$. For any $V_i$ with color $\bar C$ and $i\in[\tau]$, there are $\sum_{e\ni i}h(e)$ parts $V_{i,k}$ which are paired up with a different part $V_{j,k'}$. We either have $|V_{i,k}\setminus V(\mathcal{F})|\leq\delta y$ or $|V_{j,k'}\setminus V(\mathcal{F})|\leq\delta y$. In the first case, $|V_{i,k}\cap V(\mathcal{F})|\geq \lfloor y/s\rfloor-\delta y\geq (1/s-1/y-\delta)y$. In the second case, $|V_{j,k'}\cap V(\mathcal{F})|\geq \lfloor y/r\rfloor-\delta y\geq(1/r-1/y-\delta)y$. But $\mathcal{F}$ is a family of copies of $K_{r,s}$, so $|V_{i,k}\cap V(\mathcal{F})|=\frac rs|V_{j,k'}\cap V(\mathcal{F})|\geq (1/s-\lambda^{-1}(1/y+\delta))y$. In either case we have $|V_{i,k}\cap V(\mathcal{F})|\geq (1-\sigma(1/y+\delta))y/s$, for $\sigma=\max\{r,s\}$.

Putting our bounds together:

\begin{align*}\frac{|(V(\mathcal{F})\cup C_{G})\cap[t]|}{t}\geq & \frac{|(V(\mathcal{F})\cup C_{G})\cap(\cup_{i=1}^\tau V_i)|-\rho n}{t}\\
 \geq&\frac{y|C_{H'}\cap [\tau]|}{t}+\left(1-\sigma\left(\frac1y+\delta\right)\right)\frac ys\frac{\sum_{v\in (\bar C_{H'}\cap[\tau])}\sum_{e\ni v}h(e)}{t}-\frac{\rho n}{t}\\
 \geq&\left(1-\sigma\left(\frac1y+\delta\right)\right)\frac{\tau y}{t}\left(\frac{|C_{H'}\cap[\tau]|}{\tau}+\frac{\sum_{v\in (\bar C_{H'}\cap[\tau])}\sum_{e\ni v}h(e)}{s\tau}\right)-\frac{\rho n}{t}\\
 \geq&\left(1-\sigma\left(\frac1y+\delta\right)\right)\frac{\tau y}{t}\left(f(\lambda)-\frac\epsilon 2\right)-\frac{\rho}{\frac{\tau(1-\delta)}{m}-\rho}\\
 \geq& \left(1-\sigma\left(\frac1y+\delta\right)\right)\frac{\tau y}{\delta n+\tau y}\left(f(\lambda)-\frac\epsilon 2\right)-\frac{\rho}{\gamma(1-\delta)-\rho}\\
 \geq& \left(1-\sigma\left(\frac{m}{(1-\delta)n}+\delta\right)\right)\frac{1}{1+\delta\frac{n}{my}\frac m\tau}\left(f(\lambda)-\frac\epsilon 2\right)-\frac{\rho}{\gamma(1-\delta)-\rho}\\
 \geq& \left(1-\sigma\left(\frac{M}{(1-\delta)N}+\delta\right)\right)\frac{1}{1+\frac{\delta}{(1-\delta)\gamma}}\left(f(\lambda)-\frac\epsilon 2\right)-\frac{\rho}{\gamma(1-\delta)-\rho}\\
 \geq& f(\lambda)-\epsilon
 \end{align*}

if $\rho,\delta<<\epsilon, r,s$ and $N>>M$. To conclude the proof of our initial claim, notice that $t\geq \left(\frac \tau m-\rho\right)n\geq(\gamma-\rho) n\geq \gamma'n$ for a constant $\gamma'>0$. 

We are now ready to construct $W$. Take a sequence $f(s/r)>\epsilon_1> \epsilon_2> \dots>0$ with $\epsilon_i\rightarrow 0$. Start by applying the claim with $\epsilon=\epsilon_1$ and $n_1=N(\epsilon)$ to obtain a subgraph $\mathcal{F}_1$ with color $C_1$ with density at least $f(s/r)-\epsilon_1$ in $[t_1]$. Now proceed by induction, and set $n_i=\max\{N(\epsilon_i/2), 2n_{i-1}(r+s)/(\epsilon_i\gamma(\epsilon_i/2))\}$. Applying the claim with $\epsilon=\epsilon_i/2$ we find a subgraph $\mathcal{F}_i'$ with color $C_i$ contained in $[n_i]$ and with density at least $f(s/r)-\epsilon_i/2$ in $[t_i]$, for some $t_i\in[\gamma(\epsilon)n,n]$. Remove from $\mathcal{F}_i'$ all components that intersect $[n_{i-1}]$ (this represents at most $n_{i-1}(r+s)$ vertices) to obtain $\mathcal{F}_i$. Then $\mathcal{F}_i$ is disjoint from all previous $\mathcal{F}_j$, and by the choice of $n_i$, it still has density at least $f(s_r)-\epsilon_i$ in $[t_i]$.

Select a color $C$ such that $C_i=C$ for infinitely many $i$. Let $W=\cup_{C_i=C}\mathcal{F}_i$. Then by construction $\bar d(W)\geq f(s/r)$, since the $t_i$ tend to infinity, and the components of $W$ are isolated vertices of color $C$ or $K_{r,s}^C$. This concludes the proof of Lemma \ref{finddis}.
\end{proof}

Finally, we prove Lemma \ref{algoh} by defining an algorithm that constructs a mo\-no\-chro\-matic $H'$. This algorithm uses enough components from $W$ (mapping to them either single vertices of $H$ or sets $I_i\cup N(I_i)$) to keep a fraction of its density, and takes advantage of the properties of the $a$-good coloring to map the remaining vertices of $H$.

\begin{proof}[Proof of Lemma \ref{algoh}]Without loss of generality, assume that $C$ is red, let $S_j$ denote the vertices in $W$ of shade $R_j$, plus the blue vertices contained in a copy of $K_{r,s}^R$ in $W$ in which the red side has shade $R_j$. Removing from $W$ the finite sets $S_j$ does not affect its density, so suppose that each $S_j$ is either empty or infinite. We will show that there exists a set $J$, of size $b$, such that $\bar d(\cup_{j\in J}S_j)\geq b/a'\bar d(W)$.

By definition of density, there exists a sequence $n_1<n_2<\dots$ of positive integers such that $|V(W)\cap [n_i]|/n_i\rightarrow \bar d(W)$. For each $i$ there exists a subset $J_i\subseteq[a]$ of $b$ indices such that \[\frac{|(\cup_{j\in J_i}S_j)\cap [n_i]|}{n_i}\geq \frac{b}{a'}\frac{|V(W)\cap [n_i]|}{n_i}.\] For infinitely many $i$, the set $J_i$ is the same, which we denote $J$. By taking an appropriate subsequence of $n_1, n_2, \dots$, we can suppose without loss of generality that $J_i=J$ for all $i$ and that $n_{i+1}/n_i\rightarrow\infty$. Let $\mathcal{F}_j$ is the union of components from $W$ contained in some $S_j$ with $j\in J$, which contain a vertex from $[n_i]$ but no vertex from $[n_{i-1}]$.
Let $\mathcal{I}=\{I_i, I_2, \dots\}$ be the family of doubly independent sets. We can suppose that the elements in $\mathcal{I}$ are such that the sets $I_i\cup N(I_i)$ are pairwise disjoint. Indeed, because $H$ is locally finite, each $I_i\cup N(I_i)$ intersects finitely many sets $I_j\cup N(I_j)$, so we can find an infinite subfamily $\mathcal{I}'$ by including into it only the sets $I_i$ such that $I_i\cup N(I_i)$ does not intersect a set $I_j\cup N(I_j)$ for some $j<i$ with $I_j\in\mathcal{I'}$. This does not change the components in which $\mathcal{I}$ is concentrated. 

Let $J'\subseteq J$ be the set of indices in $j$ for which $S_j$ is non-empty. We assign to each component $\mathcal C\subseteq H$ a number $\kappa(\mathcal{C})\in J'$, in such a way that for every $j\in J'$ there are infinitely many sets $I_i$ in components with $\kappa(\mathcal{C})=j$. Indeed, if finitely many components intersect $\mathcal{I}$, there are at least $b$ components that contain infinitely many elements of $\mathcal{I}$, so give different values of $\kappa(\mathcal{C})$ to $|J'|\leq b$ of them, whereas if there are infinitely many components that intersect $\mathcal{I}$, we can assign each value of $J'$ to infinitely many of them. The purpose of $\kappa(\mathcal{C})$ will be to identify the shade of red to be used in the vertices while embedding $\mathcal{C}$ in the red edges of $\chi$.

We will define an injective graph homomorphism $\Phi:H\rightarrow K_\N$ which maps edges to red edges, whose image contains $\mathcal{F}_i$ for infinitely many $i$. This is enough to prove Theorem \ref{mainlb}, because for infinitely many large enough values of $i$ we have \begin{align*}\frac{|\Phi(V(H))\cap [n_i]|}{n_i}\geq& \frac{|V(\mathcal{F}_i)\cap [n_i]|}{n_i}\geq \frac{b}{a'}\frac{|V(W)\cap[n_i]|}{n_i}-\frac{(r+s)n_{i-1}}{n_i}\\ \geq&\frac{b}{a'}\bar d(W)-o(1).
\end{align*}

We will define $\Phi$ in steps. On every step, we will define the image of finitely many vertices of $H$. After every step, the following conditions must hold. Let $u,v$ be two adjacent vertices in some component $\mathcal{C}$ of $H$, such that $\Phi(v)$ is defined and $\Phi(u)$ is not. Then:

\begin{itemize}\item If $\kappa(\mathcal{C})\neq a$, then $\Phi(v)\in R_{\kappa(\mathcal{C})}$.
\item If $\kappa(\mathcal{C})=a$ and $\Psi(v)=a$, then $\Phi(v)\in R_a$.
\item If $\kappa(\mathcal{C})=a$ and $\Psi(v)\neq a$, then $\Phi(v)\in B_{\Psi(v)}$ and $\Psi(u)<\Psi(v)$.
\end{itemize}

The algorithm will consist of two operations that alternate: defining the image of a vertex $v\in V(H)$ and adding some $\mathcal{F}_i$ to the image of $\Phi$. If we identify $V(H)$ with $\N$, and always apply the first operation to the least vertex $v$ with undefined $\Phi(v)$, at the end of the algorithm $\Phi(v)$ will be defined for every vertex in $V(H)$.

\textbf{Define the image of a vertex $v\in V(H)$:} Suppose first that $v\in\mathcal{C}$ and $\kappa(\mathcal{C})=k\neq a$. Let $w_1, \dots, w_q$ be the neighbors of $v$ which have $\Phi(w_i)$ defined. By our invariant, the vertices $\Phi(w_i)$ all have shade $R_k$, and therefore there are infinitely many vertices $x$ in shade $R_k$ which are connected to every $\Phi(w_i)$ through a red edge. Select one such $x$ which is not yet in the image of $\Phi$, and set $\Phi(v)=x$.

Now suppose that $\kappa(\mathcal{C})=a$. Let $\Psi(v)=k$. For every edge $uw$ of $H$, define an orientation $\overrightarrow{uw}$ such that $\Psi(u)<\Psi(w)$. Let $T$ be the set of vertices that can be reached from $v$ in this orientation. Because $T$ is connected, does not contain a path of length greater than $a$, and the degree of every vertex is finite, by K\"onig's lemma $T$ is finite. Also, $T$ does not have an oriented cycle. Observe that, by our invariant, if $\overrightarrow{uw}$ is an edge and $\Phi(u)$ is defined, then $\Phi(w)$ is defined.

Now define $\Phi(w)$ for every $w\in T$ for which the image is still undefined, in decreasing order of $\Psi(w)$. If $\Psi(w)=a$, choose an arbitrary vertex $x\in R_a$ which is not yet the image of any vertex and set $\Phi(w)=x$. If $\Psi(w)=k<a$, then for every $w'\in N^+(w)$ the image $\Phi(w')$ is defined and in $B_{k+1}\cup\dots\cup B_{a-1}\cup R_a$. By the properties of $a$-good colorings, there are infinitely many vertices\footnote{If $N^+(w)$ is empty, how do we know that $B_k$ is infinite? Because $\kappa(\mathcal{C})=a$, we know that $R_a$ is not empty. By the properties of $a$-good colorings, every vertex $y\in R_a$ has infinitely many red neighbors in $B_k$, and in particular $B_k$ is infinite.} $x\in B_k$ which are connected to every $\Phi(w')$ through a red edge. Choose one which is not yet in the image of $\Phi$, and set $\Phi(w)=x$.

\textbf{Add some set $\mathcal{F}_i$ to the image:} Select some $\mathcal{F}_i$ which is so far disjoint with the image of $\Phi$. For each $K_{r,s}^R$ component $Z\subseteq \mathcal{F}_i\cap S_j$, choose a doubly independent set $I\subseteq V(H)$ in a component $\mathcal{C}$ with $\kappa(\mathcal{C})=j$, such that no vertex from $I\cup N(I)$ has a defined image. If $V(Z)=X\cup Y$ with $|X|=r$ blue and $|Y|=s$ red, then set $\Phi$ to be bijective from $I$ to $X$, and injective from $N(I)$ to $Y$. The vertices $v$ of $\mathcal{F}_i\cap S_j$ that remain outside of the image at this point all have shade $R_{j}$. For each of them, choose a vertex $w$ with $\Psi(w)=a$ in a component $\mathcal{C}$ with $\kappa(\mathcal{C})=j$ (there are infinitely many of these vertices), whose image is not yet defined, and set $\Phi(w)=v$. After doing this for every vertex in $\mathcal{F}_{i}\cap S_j$ for every $j\in J'$, the set $\mathcal{F}_i$ is contained in the image.

After both steps are applied alternatingly infinitely many times, the image of $\Phi$ is a monochromatic red graph $H'\subseteq K_\N$ which contains infinitely many sets $\mathcal{F}_i$, and therefore $\bar d(H')\geq b/a'\bar d(W)$.\end{proof}

To prove the upper bound of Theorem \ref{compmt}\ref{compiii}, we just need the following variant of Lemma \ref{algoh}. The proof is then analogous to the proof of Theorem \ref{mainlb}, except we bypass completely the use of Lemma \ref{finddis}, and we only need that in the $a$-good coloring we have $\max\{\bar d(R), \bar d(B)\}\geq 1/2$.

\begin{lemma}Let $\chi:E(K_\N)\rightarrow\{R,B\}$ be an edge-coloring, let $a\geq a'\geq b$ be positive integers. Let $\N\rightarrow \{R_1, \dots, R_a, B_1, \dots, B_a, X\}$ be an $a$-good coloring in which at most $a'$ shades of each color are non-empty. Let $C\in\{R,B\}$. Let $H$ be a graph with chromatic number $a$ and at least $b$ infinite components. Then there exists a monochromatic $H'\subseteq K_\N$ of color $C$, $H'\simeq H$, with $\bar d(H')\geq b/a'\bar d(C)$.\end{lemma}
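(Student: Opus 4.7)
The plan is to recycle the embedding algorithm of Lemma \ref{algoh}, with the single change that the structured subgraph $W$ (which encoded doubly-independent sets via copies of $K_{r,s}^C$) is replaced by the bare color class of $C$. Without loss of generality take $C=R$, and fix a proper coloring $\Psi:V(H)\to[a]$ of $H$, which exists because $\chi(H)=a$.

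First I would carry out the same averaging step as in Lemma \ref{algoh}, but adapted to this simpler setting, to locate $b$ red shades already carrying a $b/a'$ fraction of the red density. Let $J_0\subseteq[a]$ be the indices of non-empty red shades; then $|J_0|\leq a'$ and, since $X$ is finite, $\bar d(\cup_{j\in J_0}R_j)=\bar d(R)$. Pick a sequence $n_1<n_2<\cdots$ realizing the upper density of $R$. For each $k$, averaging over the (finitely many) $b$-subsets of $J_0$ yields a subset $J_k\subseteq J_0$ of size $b$ with
\[\frac{|(\cup_{j\in J_k}R_j)\cap[n_k]|}{n_k}\geq\frac{b}{a'}\cdot\frac{|R\cap[n_k]|}{n_k}.\]
Only finitely many such $J_k$ are possible, so some $J$ equals $J_k$ along an infinite sub-sequence, and thus $\bar d(\cup_{j\in J}R_j)\geq(b/a')\bar d(R)$.

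Next I would reuse the embedding algorithm of Lemma \ref{algoh} together with its three invariants on $\Phi:V(H)\to V(K_\N)$. Assign $\kappa$ on the components of $H$ by taking $b$ distinct infinite components (which exist by hypothesis) and mapping them bijectively to the $b$ elements of $J$; the remaining components are given arbitrary values in $J$. The "Define the image of a vertex $v\in V(H)$" subroutine is copied verbatim from Lemma \ref{algoh}, since it uses only the $a$-good coloring properties. The "Add some set $\mathcal{F}_i$ to the image" subroutine is replaced by the following: at stage $k$, for every $v\in(\cup_{j\in J}R_j)\cap[n_k]$ not yet in the image of $\Phi$, write $v\in R_j$ and pick an undefined vertex $w$ of $H$ in a component $\mathcal{C}$ with $\kappa(\mathcal{C})=j$ (with $\Psi(w)=a$ when $j=a$) such that no neighbor of $w$ is yet in the domain of $\Phi$; then set $\Phi(w)=v$. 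The three invariants are preserved for exactly the same reasons as in Lemma \ref{algoh}.

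After alternating the two subroutines indefinitely, the resulting image $\Phi(V(H))$ contains $(\cup_{j\in J}R_j)\cap[n_k]$ for infinitely many $k$, so
\[\bar d(H')\geq\bar d(\cup_{j\in J}R_j)\geq\frac{b}{a'}\bar d(R)=\frac{b}{a'}\bar d(C).\]
The main obstacle I expect to confront is the bookkeeping inside the modified "Add" step: one must guarantee, at every stage, the existence of an undefined vertex $w$ with the prescribed $\kappa$-value (and, when $j=a$, the prescribed $\Psi$-value) whose neighbors are all still undefined. Local finiteness of $H$, together with the infinitude of each chosen component, reduces this to a finiteness argument: at any finite stage only finitely many vertices have been processed, and their finite closed neighborhoods miss all but finitely many candidates inside the infinite component, so a suitable $w$ is always available.
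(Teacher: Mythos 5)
Your overall strategy matches the paper's, and most of the details are right: the averaging step to locate $b$ red shades carrying a $b/a'$ fraction of the red density is correct, reusing the ``Define the image of a vertex'' subroutine verbatim is fine, and replacing the ``Add $\mathcal{F}_i$'' step by a one-vertex-at-a-time ``Add'' step is exactly what the paper does. Your observation that the constraint $\Psi(w)=a$ is only forced by the invariants when $j=a$ is also correct.

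There is, however, a genuine gap in the choice of $\Psi$. You take an \emph{arbitrary} proper coloring $\Psi:V(H)\to[a]$, whereas the paper insists that $\Psi$ be chosen so that \emph{in every component of $H$ the most common color is $a$}. This condition is doing real work. In the ``Add'' step you must, whenever $a\in J$, find infinitely many vertices $w$ with $\Psi(w)=a$ inside the infinite component(s) with $\kappa(\mathcal{C})=a$. Your final paragraph dismisses this as a finiteness argument (``their finite closed neighborhoods miss all but finitely many candidates''), but that argument presupposes there are infinitely many candidates to begin with. With an arbitrary $\Psi$ there may not be: a component of $H$ can have chromatic number strictly less than $a$, and then a proper $a$-coloring may use color $a$ in that component only finitely often, or not at all. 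For a concrete failure, take $H=K_a\cup b\cdot P_\infty$ (chromatic number $a$, $b$ infinite components); a proper coloring could give colors $1,\dots,a$ to the clique and alternate $1,2$ on every infinite path, so color $a$ appears exactly once in all of $H$. If the $a$-good coloring happens to have $C_a$ among the $b$ densest shades, the ``Add'' step runs dry after one vertex. The fix is precisely the paper's: since each infinite component is a locally finite connected infinite graph, some color class inside it is infinite; relabel the colors component-by-component so that this color becomes $a$. You cannot bypass the issue by avoiding $a$ in the averaging either, since $C_a$ may carry most of the density.
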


\begin{proof} Let $\Psi:V(H)\rightarrow[a]$ be a proper coloring, in which in every component of $H$ the most common color is $a$. Without loss of generality suppose that $C$ is red. As in the proof of Lemma \ref{algoh}, there exists $J'$ with $|J'|\leq b$, such that $\bar d(\cup_{j\in J'}R_j)\geq b/a'$, and all $R_j$ with $J\in J'$ are infinite. Let $\mathcal{F}=\cup_{j\in J'}R_j$. Define a function $\kappa$ from the components of $H$ to $J'$ for which the pre-image of every value contains infinitely many vertices. The algorithm now alternates between \textbf{define the image of a vertex $v\in V(H)$}, as above, and \textbf{add a vertex of $\mathcal{F}$ to the image}. At the end of the procedure, we obtain a red $H'\subseteq K_\N$ isomorphic to $H$ which contains $\mathcal{F}$, and thus has density at least $\bar d(\mathcal{F})\geq b/a'\bar d(R)$.

\textbf{Add a vertex of $\mathcal{F}$ to the image:} Let $v\in\mathcal{F}$ be a vertex in $R_j$. Choose a vertex $w$ in a component $\mathcal{C}$ with $\kappa(\mathcal{C})=j$, such that no vertex in $w\cup N(w)$ has a defined image and with $\Psi(w)=a$, and set $\Phi(w)=v$.\end{proof}

\section{Bounds for particular families of graphs}\label{sec:examples}

The goal of this section is to prove the remaining results from Section \ref{sec:intro}. We will start stating and proving the more general result for bipartite graphs, which implies Theorem \ref{forest}.

\begin{theo}\label{mainthmbip} Let $H$ be a locally finite bipartite graph, and let $\lambda=\liminf_{n\rightarrow\infty}\frac{\mu(H,n)}{n}$. Suppose that for every $\lambda'>\lambda$ there exist infinitely many pairwise disjoint independent sets $I_1, I_2, \dots$, all of the same size, with $\frac{|N(I_i)|}{|I_i|}\leq \lambda'$. Then $\rho(H)=f(\lambda)$.  \end{theo}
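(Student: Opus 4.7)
The upper bound $\rho(H)\leq f(\lambda)$ is immediate from Theorem \ref{mainub}, so the plan is to establish the matching lower bound by invoking Theorem \ref{mainlb} with parameters $a=2$ and $b=1$, any proper $2$-coloring $\Psi\colon V(H)\to[2]$ of the bipartite graph $H$, and a refinement of the hypothesized independent sets into doubly independent sets of constant size lying inside a single component and a single bipartition class. With these parameters Theorem \ref{mainlb} will produce $\rho(H)\geq\tfrac{b}{a-1}f(s/r)=f(s/r)$, which we will push arbitrarily close to $f(\lambda)$ by varying $\lambda'$.

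Fix $\lambda'>\lambda$ and use the hypothesis to obtain pairwise disjoint independent sets $I_1,I_2,\ldots$ of some common size $r_0$ with $|N(I_i)|\leq\lambda' r_0$. The core refinement step is to partition each $I_i$ according to (component, $\Psi$-class), writing $I_i=\bigsqcup_\alpha I_i^\alpha$ over the non-empty pairs. The neighborhoods $N(I_i^\alpha)$ are pairwise disjoint across $\alpha$, since distinct components are vertex-disjoint, and within one component the neighborhoods of the two $\Psi$-classes lie on opposite sides of the bipartition; hence $|N(I_i)|=\sum_\alpha |N(I_i^\alpha)|$ and the mediant inequality furnishes a piece $J_i:=I_i^{\alpha_i}$ with $|N(J_i)|/|J_i|\leq|N(I_i)|/|I_i|\leq\lambda'$. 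By construction each $J_i$ lies in a single component and a single $\Psi$-class. Since $|J_i|\leq r_0$ and $|N(J_i)|\leq\lambda' r_0$ take only finitely many integer values, three successive pigeonhole passes (on $|J_i|$, then on $|N(J_i)|$, then on the $\Psi$-class containing $J_i$) yield an infinite subfamily with common size $r$, common neighborhood size $s\leq\lambda' r$, and all $J_i\subseteq\Psi^{-1}(1)$, after possibly swapping the two labels of $\Psi$.

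The resulting family $(J_i)$ is pairwise disjoint (inherited from the $I_i$), each $J_i$ lies in one component, and being contained in one bipartition class of one component each $J_i$ is doubly independent with $N(J_i)\subseteq\Psi^{-1}(2)$, so $\Psi(N(J_i))=\{a\}$; moreover the family is trivially not concentrated in fewer than $b=1$ components. All hypotheses of Theorem \ref{mainlb} are thus met, giving $\rho(H)\geq f(s/r)\geq f(\lambda')$, where the second inequality uses $s/r\leq\lambda'$ together with the monotonicity of $f$ from Appendix \ref{appf}. Letting $\lambda'\to\lambda^+$ and invoking right-continuity of $f$ at $\lambda$ (also established in Appendix \ref{appf}) completes the lower bound and matches the upper bound from Theorem \ref{mainub}.

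The main obstacle is the refinement $I_i\leadsto J_i$: one must simultaneously guarantee all five requirements of Theorem \ref{mainlb} (constant size, constant neighborhood size, confinement to a single component, double independence, and $\Psi$-image of the neighborhood equal to $a$) without inflating the expansion ratio $|N|/|I|$. The mediant inequality (to preserve the ratio while splitting) together with the pigeonhole passes (to stabilize sizes and class membership) are exactly what is needed, and the bipartiteness of $H$ is used precisely to make the chosen piece doubly independent for free. Beyond this bookkeeping, the argument is a direct invocation of the machinery built in Sections \ref{sec:genubounds}--\ref{sec:genlbounds}.
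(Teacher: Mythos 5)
Your proposal is correct and follows essentially the same route as the paper's own proof: upper bound from Theorem \ref{mainub}, then refine the hypothesized independent sets by partitioning each $I_i$ according to component and $\Psi$-class, preserve the expansion ratio via the mediant inequality, pigeonhole to stabilize $(|J_i|,|N(J_i)|)$ and the bipartition class, use bipartiteness to get double independence for free, invoke Theorem \ref{mainlb} with $a=2$, $b=1$, and close via the monotonicity and continuity of $f$. The only cosmetic difference is that the paper fixes $\epsilon>0$ and chooses $\lambda'$ with $f(\lambda')>f(\lambda)-\epsilon$ up front, while you take $\lambda'\to\lambda^+$ at the end; both rely on the same continuity of $f$ established in Appendix \ref{appf}. (Incidentally, you state the relabeling of $\Psi$ correctly — $J_i\subseteq\Psi^{-1}(1)$ so that $\Psi(N(J_i))=a=2$ — whereas the paper's sentence ``If $\Psi(I'_i)=2$ does not hold for infinitely many $i$\ldots'' appears to have the class reversed; the intent is clearly what you wrote.)
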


\begin{proof}The upper bound follows from Theorem \ref{mainub}. We will show that, for every $\epsilon>0$, we have $\rho(H)\geq f(\lambda)-\epsilon$. Our goal is to show that $H$ satisfies the condition of Theorem \ref{mainlb} for $a=2$, $b=1$, a certain coloring $\Psi$ and some doubly independent sets $I'_i$. Let $\Psi:V(H)\rightarrow\{1,2\}$ be a proper coloring. Choose $\lambda'>\lambda$ such that $f(\lambda')>f(\lambda)-\epsilon$ (it exists by continuity of $f$). There exist infinitely many pairwise disjoint independent sets $I_i$, all with the same size, such that $\frac{|N(I_i)|}{|I_i|}\leq \lambda'$ (by the condition from the statement). Partition each set $I_i$ into non-empty sets $I_{i,1}, \dots, I_{i,k_i}$, where each vertex $v$ is classified according to its color by $\Psi$ and the component it belongs to. If two vertices $v,w$ have a common neighbor, then they are in the same component and $\Psi(u)=\Psi(v)$. For this reason, $|N(I_i)|=\sum_{j=1}^{k_i}|N(I_{i,j})|$. There exists some $\tau_i$ such that \[\frac{|N(I_{i,\tau_i})|}{|I_{i,\tau_i}|}\leq\frac{\sum_{j=1}^{k_i}|N(I_{i,j})|}{\sum_{j=1}^{k_i}|I_{i,j}|}=\frac{|N(I_i)|}{|I_i|}\leq\lambda'\] Set $I'_i=I_{i,\tau_i}$. Set $r_i=|I'_i|$ and $s_i=|N(I'_i)|$. There is a pair $(r,s)$ satisfying $(r,s)=(r_i,s_i)$ for infinitely many values of $i$. Considering only the values of $i$ for which this equality holds, we have our set of independent sets. Note that, because $H$ is bipartite, $N(I'_i)$ is monochromatic and thus independent, meaning that $I'_i$ is doubly independent. If $\Psi(I'_i)=2$ does not hold for infinitely many $i$, replace $\Psi$ with $\bar\Psi=3-\Psi$. We can now apply Theorem \ref{mainlb} to obtain $\rho(H)\geq f(s/r)\geq f(\lambda')$. \end{proof}

To deduce Theorem \ref{forest} from Theorem \ref{mainthmbip}, we need to show that, in both graphs with infinite orbits and forests, the condition in the statement of Theorem \ref{mainthmbip} holds.

\begin{proof}[Proof of Theorem \ref{forest}] Let $\lambda=\liminf\limits_{n\rightarrow\infty}\frac{\mu(H,n)}{n}$. Fix $\lambda'>\lambda$. We will show that, in both cases, there exist infinitely many pairwise disjoint independent sets $I_1, I_2, \dots$, all with the same size, with $\frac{|N(I_i)|}{|I_i|}\leq\lambda'$. 

\textbf{For graphs with infinite orbits:} Choose $n$ such that $\frac{\mu(H,n)}{n}<\lambda'$. Let $I$ be an independent set of size $n$ with $|N(I)|=\mu(H,n)$. We will show that there are infinitely many automorphisms $\sigma_i\in {\rm Aut}(H)$ such that the sets $\sigma_i(I)$ are pairwise disjoint. Then we can take $I_i=\sigma_i(I)$ to conlude the proof. We proceed by induction on $n$. For $n=1$, if $I=\{v\}$, this is equivalent to the orbit of $v$ being infinite.

Suppose that the result is true for $n-1$. Suppose that we have already found $\sigma_1, \sigma_2, \dots, \sigma_k$ such that the sets $\sigma_i(I)$ are pairwise disjoint. Let $X=\cup_{i=1}^k\sigma_i(I)$. We will construct $\sigma_{k+1}\in {\rm Aut}(H)$ such that $\sigma_{k+1}(I)$ is disjoint from $X$. Choose $v\in I$. By the induction hypothesis, there is an infinite family $\{\tau_i\}_{i=1}^\infty\subseteq {\rm Aut}(H)$ such that the sets $\tau_i(I-v)$ are pairwise disjoint. If $\tau_i(v)\not\in X$ for some $i$, then we can take $\sigma_{k+1}=\tau_i$, and we are done. Therefore, assume that $\tau_i(v)\in X$ for every $i$. By pigeonhole principle, there exists $w$ such that $\tau_i(v)=w$ for infinitely many $i$. Choose $\phi\in {\rm Aut(H)}$ such that $\phi(w)\not\in X$ (it exists because the orbit of $w$ is infinite). The set $\phi^{-1}(X)$ intersects finitely many sets $\tau_i(I-v)$, therefore there exists some $i$ with $\tau_i(I-v)$ disjoint from $\phi^{-1}(X)$ and $\tau_i(v)=w$. Putting this together, $\phi(\tau_i(I))$ is disjoint from $X$, as we wanted.

\textbf{For forests:} The following lemma will be used to find independent sets of bounded size with bounded expansion within larger independent sets:

\begin{lemma}\label{treecut} For every $\lambda'>\lambda$ there exists $M=M(\lambda,\lambda')$ such that, for every independent set $I$ in a forest with $|N(I)|\leq\lambda|I|$, there exists $I'\subseteq I$ with $|N(I')|\leq \lambda'|I'|$ and $|I'|\leq M$.\end{lemma}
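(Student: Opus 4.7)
The plan is to prove the lemma by induction on $|I|$, with $M = M(\lambda,\lambda')$ to be specified. The base case $|I| \le M$ is immediate (take $I' = I$, which has ratio $\le \lambda \le \lambda'$). For the inductive step $|I| > M$, I first reduce to a single-tree situation: the subforest $G$ of $F$ induced on $I \cup N(I)$ is a bipartite forest, and by pigeonhole over its tree components some tree $T$ satisfies $|N(I) \cap T|/|I \cap T| \le \lambda$. Since $I\cap T$ has all its $F$-neighbors inside $T$, one may replace $I$ by $I\cap T$ and work within $T$.

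Within $T$, I would look for a vertex $v \in I$ whose ``private'' neighbors in $N(I)$---which in a bipartite tree are precisely the leaves of $T$ adjacent to $v$---number at least $\lambda$, so that removing $v$ still yields an independent set $I\setminus\{v\}$ of ratio at most $\lambda$; the inductive hypothesis then supplies $I' \subseteq I\setminus\{v\}$ of the desired size and ratio. If no such $v$ exists, i.e.\ every $v\in I$ has $p(v) < \lambda$, then in the range $\lambda < 1$ there are no leaves of $T$ in $N(I)$; all leaves of $T$ lie in $I$, and an edge-counting argument (using $|E(T)| = |I|+|N(I)|-1$ and that every non-leaf has degree $\ge 2$) gives $|I| \le 1/(1-\lambda)$. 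Choosing $M \ge 1/(1-\lambda)$ ensures the base case is triggered before this extremal situation arises.

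The main obstacle is uniformly handling the regime where single-vertex removal does not suffice, which corresponds roughly to $\lambda' \le 1+\lambda$. I would extend the argument here to simultaneous removal of a bounded-size subset $W \subseteq I$ (or equivalently take $I' = W$), with $|W| = \Theta(1/(\lambda'-\lambda))$ chosen so that the ``$+1$'' correction arising from the edge to the parent in a rooted subtree becomes negligible compared to $\lambda'|W|$. Concretely, after rooting $T$, a weighted-averaging argument over the children of a well-chosen vertex---sorting children by their individual subtree ratios and taking a prefix whose combined $|I|$-part lies in the target range---produces a subset $I' = W$ whose ratio is bounded above by the weighted average $\le \lambda$ plus an additive $1/|W| \le \lambda'-\lambda$ correction from the parent edge. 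The resulting bound is $M = \Theta\bigl(\max\{1/(1-\lambda),\,1/(\lambda'-\lambda)\}\bigr)$ after combining both regimes, and the main technical difficulty is to verify that the integer version of the averaging selection always achieves the target size and ratio simultaneously.
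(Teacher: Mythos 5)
Your plan diverges substantially from the paper's argument, and the part you yourself flag as the ``main technical difficulty'' is a genuine gap, not just a verification to be filled in. The pieces you do carry out are fine: the reduction to a single tree by pigeonhole, the single-vertex removal when some $v\in I$ has at least $\lambda$ private leaf-neighbors (this preserves $|N(I\setminus\{v\})|\le\lambda|I\setminus\{v\}|$), and the degree-sum bound $|I|\le 1/(1-\lambda)$ when $\lambda<1$ and $N(I)$ contains no leaves. But these cases do not exhaust the problem for $\lambda\ge 1$, and the ``root, sort children by subtree ratio, take a prefix'' step you propose for the remaining regime does not work as stated. The difficulty is not the rounding in the averaging; it is that the children subtrees have \emph{unbounded} size, so there need not exist any prefix whose combined $|I|$-part lies in the target range $[\Theta(1/(\lambda'-\lambda)),M]$. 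A concrete obstruction: let $T$ be a vertex $w\in N(I)$ with $k$ children, each the root of a long alternating path of length $2m$ with $m\gg M$. Every child subtree has ratio $\approx 1$, the whole tree has $\lambda\approx 1$, and the smallest nonempty prefix (a single child) already has $|I|$-part $m\gg M$. You would have to recurse \emph{inside} a child subtree, and each recursion contributes another ``$+1$'' from the new parent edge; with no a priori bound on the recursion depth, the correction term is not controlled by $|W|$. (Also, a minor sign issue: if the root $w$ is taken in $I$, the weighted average of the children ratios is $\lambda|I|/(|I|-1)>\lambda$, not $\le\lambda$; you need $w\in N(I)$.)

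The paper's proof avoids exactly this by reversing the order of operations: it \emph{first} removes a $\delta$-fraction $S$ of $V(F)$ so that every component of $F\setminus S$ has size at most $\delta^{-1}$ (greedily peeling off a lowest vertex with $\ge\delta^{-1}$ descendants), enlarges $S$ to $X$ by adding the parents of $S\cap N(I)$ so that each component of $F\setminus(X\cap I)$ meets $X\cap N(I)$ in at most one vertex, and only \emph{then} averages. After averaging, a good component either already has $|I|$-part at most $2\delta^{-1}$, or it contains a unique vertex $v\in X\cap N(I)$ whose removal splits it into pieces of size $\le\delta^{-1}$, and a prefix of those pieces (sorted by ratio, taken up to the threshold $\delta^{-1}$) gives the desired $I'$ with only a single ``$+1$'' to absorb. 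The separator step is precisely what guarantees the prefix has bounded size, and it is the ingredient missing from your sketch; without it, the deletion/recursion in your plan cannot be made to terminate within a bounded number of ``$+1$'' corrections.
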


Knowing this lemma, choose $\lambda''<\lambda'''\in(\lambda,\lambda')$, and set $M=M(\lambda''',\lambda')$. Suppose that we have already constructed pairwise disjoint independent sets $I_1, I_2, \dots, I_k$ with $|I_i|\leq M$ and $|N(I_i)|\leq \lambda'|I_i|$. We will find a new set $I_{k+1}$, disjoint from the others. Let $S=\cup_{i=1}^kI_i$. There exists $n$ large enough such that $\frac{n}{n-|S|}\leq \frac{\lambda'''}{\lambda''}$. By definition of $\liminf$ and $\mu(H,n)$, there exists an independent set $I$ with $|I|\geq n$ and $|N(I)|\leq \lambda''|I|$. Then \[|N(I\setminus S)|\leq |N(I)|\leq \lambda''|I|\leq \lambda''(|I\setminus S|+|S|)\leq \lambda'''|I\setminus S|\]

By our claim, there exists $I_{i+1}\subseteq I\setminus S$ such that $|I_{k+1}|\leq M$ and $|N(I_{k+1})|\leq \lambda' |I_{k+1}|$. Once we have constructed an infinite family of independent sets $I_1, I_2, \dots$, simply take a pair $(r,s)$ which is equal to $(|I_i|, |N(I_i)|)$ for infinitely many $i$ (which is possible because this pair can only take finitely many values), and we are done.\end{proof}

\begin{proof}[Proof of Lemma \ref{treecut}]
Let $\delta=\delta(\lambda,\lambda')>0$ be small enough, which we will fix later. Let $F$ be the forest with vertex set $I\cup N(I)$ and containing only the edges between $I$ and $N(I)$ in our original graph. It is enough to prove our result in $F$. Denote $J=N(I)$. For every component of $F$ take a vertex of $I$ as the root.

There exists a set $S\subseteq V(F)$ with $|S|\leq\delta|V(F)|$, satisfying that every component of $F\setminus S$ has size at most $\delta^{-1}$. Indeed, start with $S=\emptyset$ and consider the set $U$ of vertices whose component in $F\setminus S$ contains at least $\delta^{-1}$ vertices. The rooted forest structure in $F$ induces a rooted forest structure in $F\setminus S$. Let $U'$ be the set of vertices in $V\setminus F$ which have at least $\delta^{-1}-1$ descendants. If $U\neq\emptyset$ then $U'\neq\emptyset$, because the root of the largest component will be in $U'$. Select a minimal vertex $v$ in $U'$, and add it to $S$. This removes $v$ and all its descendants from $U$, and thus reduces the size of $U$ by at least $\delta^{-1}$. After at most $\delta|V(F)|$ steps, $U$ will be empty.

Let $X$ be the union of $S$ and the parents of the vertices of $S\cap J$. This set has $|X|\leq 2|S|\leq 2\delta|V(F)|$, and every component of $F\setminus X$ is adjacent to at most one vertex in $X\cap J$, in which case it is the parent of the root. As a consequence, every component of $F\setminus (X\cap I)$ contains at most one vertex from $X\cap J$.

Let $\mathcal{C}=\{C_1, \dots, C_k\}$ be the components of $F\setminus (X\cap I)$. Then \[\frac{\sum_{j=1}^k|C_i\cap J|}{\sum_{j=1}^k|C_i\cap I|}= \frac{|J|}{|I|-|X\cap I|}\leq \frac{|N(I)|}{|I|-2\delta(|I|+|N(I)|)}\leq \frac{\lambda}{1-2\delta(1+\lambda)}=:\lambda''.\]

There exists some component $C_i$ such that $|C_i\cap J|\leq\lambda''|C_i\cap I|$. If $C_i\cap I$ has size not greater than $M:=2\delta^{-1}$, then set $I'=C_i\cap I$ and we are done, because $N(I')\subseteq C_i\cap J$. Otherwise $C_i$ has size greater than $2\delta^{-1}$, hence it must contain a (unique) vertex $v\in X\cap J$. Let $C'_1, C'_2, \dots, C'_r$ be the components obtained from $C_i$ by removing $v$, labeled in decreasing order of $|C'_j\cap J|/|C'_j\cap I|$. Consider the minimum integer $t$ such that $\sum_{j=1}^t|C'_j\cap I|\geq \delta^{-1}$. Because every component in $F\setminus X$ has size at most $\delta^{-1}$, we have $\sum_{j=1}^t|C'_j\cap I|\leq \sum_{j=1}^{t-1}|C'_j\cap I|+\delta^{-1}\leq2\delta^{-1}=M$. Set $I'=\cup_{j=1}^t(C'_i\cap I)$. Then \[\frac{|N(I')|}{|I'|}=\frac{1+\sum_{j=1}^t|C'_j\cap J|}{\sum_{j=1}^t|C'_j\cap I|}\leq  \delta+\frac{\sum_{j=1}^r|C'_j\cap J|}{\sum_{j=1}^r|C'_j\cap I|}\leq \delta+\lambda''.\]
This proves Lemma \ref{treecut}, for $\delta>0$ small enough such that $\delta+\lambda''<\lambda'$.\end{proof}

Next we will prove Corollaries \ref{kary} to \ref{bipfac} as direct applications of Theorem \ref{forest}:

\begin{proof}[Proof of Corollary \ref{kary}] We will show that $\mu(T_k,n)=kn$. For every independent set $I$ of size $n$, the set of children of the vertices of $I$ has size $kn$ and is contained in $N(I)$, thus $|N(I)|\geq kn$. Equality can hold, for example for $I=\{v_1, \dots, v_n\}$ where $v_1$ is the root of $T_k$ and $v_{i+1}$ is a grandchild of $v_i$. We therefore have $\mu(T_k,n)=kn$. Since $T_k$ is a forest, Theorem \ref{forest} applies and $\rho(T_k)=f(k)$.\end{proof}

\begin{proof}[Proof of Corollary \ref{grid}] Let $I$ be an independent set. The set $I+(0,1)$ is contained in $N(I)$, so $|N(I)|\geq|I|$ and $\mu(\mathrm{Grid}_d,n)\geq n$ for all $n$. On the other hand, let $I_k$ be the set of vertices in $[2k]^d$ with odd sum of coordinates. $I_k$ is an independent set of size $(2k)^d/2$, and $I\cup N(I)$ is contained in $[2k+2]^d$. Since $I$ and $N(I)$ are disjoint, \[\frac{|N(I)|}{|I|}=\frac{|I\cup N(I)|}{|I|}-1\leq \frac{(2k+2)^d}{(2k)^d/2}-1,\] which tends to 1 as $k\rightarrow\infty$. We have $\liminf\limits_{n\rightarrow\infty}\frac{\mu(\mathrm{Grid_d},n)}{n}=1$. The graph $\mathrm{Grid}_d$ is vertex-transitive, so by Theorem \ref{forest} we have $\rho(\mathrm{Grid}_d)=f(1)$.\end{proof}


Next we will deduce Theorem \ref{factornb} from Theorem \ref{mainlb}:

\begin{proof}[Proof of Theorem \ref{factornb}] Let $a=|V(F)|$, and let $b=a-1$. Let $\Psi:V(F)\rightarrow[a]$ be a coloring that assigns the value $a$ to every vertex in $N(I)$, and where the remaining vertices in $F$ all get different values in $[a-1]$. Because $I$ is doubly independent, this is a proper coloring. $\Psi$ extends to a coloring of $\omega\cdot F$, by coloring all copies of $F$ equally.

Let $I_1, I_2, \dots$ be the sets $I$ of all copies of $F$. Each $I_i$ is contained in a component of $F$, $\Psi(N(I_i))=a$ and the family of sets $I_i$ is not concentrated in fewer than $b$ components. Thus, by Theorem \ref{mainlb}, setting $r=|I|$ and $s=|N(I)|$, we have $\rho(\omega\cdot F)\geq f\left(\frac{|N(I)|}{|I|}\right)$. \end{proof}

Finally, we will prove Theorem \ref{beslb} using a result of Burr, Erd\H{o}s and Spencer~\cite{BurErdSpe} for the Ramsey number of $n\cdot F$:

\begin{theo}\label{buersp} Let $F_1, F_2$ be two finite graphs without isolated vertices. The two-color Ramsey number $R(n\cdot F_1, n\cdot F_2)$ satisfies \[R(n\cdot F_1, n\cdot F_2)=(|V(F_1)|+|V(F_2)|-\min\{\alpha(F_1),\alpha(F_2)\})n+O(1),\] where $\alpha(G)$ is the size of the largest independent set in $G$. In particular, $R(n\cdot F,n\cdot F)=(2|V(F)|-\alpha(F))n+O(1)$.\end{theo}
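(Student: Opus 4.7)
Write $v_i = |V(F_i)|$, $\alpha_i = \alpha(F_i)$, and, without loss of generality, $\alpha := \alpha_1 \leq \alpha_2$. The plan is to prove the matching lower and upper bounds of the stated asymptotic equality separately; the "in particular" second formula is the symmetric case $F_1 = F_2 = F$.

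For the \emph{lower bound} $R(n\cdot F_1, n\cdot F_2) \geq (v_1+v_2-\alpha)n - 1$, I would exhibit an explicit 2-coloring of $K_N$ with $N = (v_1+v_2-\alpha)n - 2$ containing neither a red $n\cdot F_1$ nor a blue $n\cdot F_2$. Partition $V(K_N) = R \sqcup B$ with $|R| = (v_1-\alpha)n - 1$ and $|B| = v_2 n - 1$. Color all edges inside $R$ red, all edges inside $B$ blue, and all edges between $R$ and $B$ red. Any red copy of $F_1$ uses some $k$ vertices from $B$; since $B$ is an independent set in the red graph, those $k$ vertices must form an independent set in $F_1$, so $k \leq \alpha$, and the remaining $v_1 - k \geq v_1 - \alpha$ vertices lie in $R$. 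Because $|R| < n(v_1-\alpha)$, one cannot fit $n$ disjoint red copies of $F_1$. Any blue copy of $F_2$ lies entirely in $B$ (vertices of $R$ have no blue edge at all and $F_2$ has no isolated vertex), and $|B| < nv_2$ precludes $n$ disjoint blue copies.

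For the \emph{upper bound} $R(n\cdot F_1, n\cdot F_2) \leq (v_1+v_2-\alpha)n + C$, I would argue by a greedy-packing plus exchange strategy. Given a 2-coloring of $K_N$ with $N$ in the target range, pick a vertex-disjoint collection of monochromatic copies that maximizes lexicographically (number of red $F_1$'s, number of blue $F_2$'s); say it has $p$ red and $q$ blue copies, where we may assume $p, q \leq n-1$. The uncovered set $U$, of size $N - pv_1 - qv_2$, admits neither a red $F_1$ nor a blue $F_2$, so $|U| < R(F_1, F_2) = O(1)$. The naive bound $R \leq (v_1+v_2)n + O(1)$ is off by $\alpha n$, and the deficit is recovered by an exchange observation: each red $F_1$ in the packing contains an $\alpha$-sized independent set $I$ of $F_1$, whose copy-vertices carry no required red edges among themselves. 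If some such $I$, together with vertices of $U$ or vertices freed from a blue copy, would support a new monochromatic copy, the packing was not maximal — contradiction. Absent such an improvement, the 2-coloring must be structurally close to the extremal example described in the lower bound (the red-$F_1$-free subgraph induced on the "$B$-like" vertices has all its edges blue, up to $O(1)$), and a direct count on this near-extremal structure yields $N \leq (v_1+v_2-\alpha)n + O(1)$.

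The principal obstacle is the quantitative exchange/stability step: making the $\alpha n$ saving rigorous requires showing that a maximal lex-packing cannot waste more than $O(1)$ vertices across the $p+q$ copies on independent "shared" subsets. I would address this by a stability-type dichotomy — either a local modification strictly improves the packing (immediate contradiction) or the packing is forced into a $B$/$R$-like partition on all but $O(1)$ vertices — with the latter case handled by the same counting that drives the lower bound.
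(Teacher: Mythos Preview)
This theorem is not proved in the paper: it is quoted from Burr, Erd\H{o}s and Spencer \cite{BurErdSpe} and used as a black box in the proof of Theorem~\ref{beslb}. There is therefore no in-paper proof to compare your proposal against.

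Your lower-bound construction is correct and is the standard one.

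Your upper bound, however, is only a plan, and the plan has a real gap which you yourself identify. From a lexicographically maximal packing with $p,q\le n-1$ and leftover $|U|<R(F_1,F_2)$ you get only $N\le (v_1+v_2)(n-1)+O(1)$, which is too weak by $\alpha n$. The ``exchange/stability dichotomy'' you propose to close this gap is not an argument: you do not say which local modification you would perform, why its failure forces the colouring into a near-extremal $R/B$ partition, or --- crucially --- why the error in that partition is $O(1)$ rather than merely $o(n)$. Stability statements with \emph{constant} error (as opposed to sublinear error) are rarely free; proving one here would amount to proving a structural lemma that already implies the theorem.

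The original Burr--Erd\H{o}s--Spencer proof takes a different and cleaner route: it is an induction on $n$. The inductive step is that for $N$ exceeding a constant $C(F_1,F_2)$, every $2$-colouring of $K_N$ contains a red copy of $F_1$ and a blue copy of $F_2$ whose union has at most $v_1+v_2-\alpha$ vertices (they overlap in at least $\alpha$ vertices, placed on fixed $\alpha$-independent sets of $F_1$ and $F_2$; this uses $\alpha\le\alpha_2$). Deleting that union drops $N$ by at most $v_1+v_2-\alpha$ while putting one red $F_1$ and one blue $F_2$ in hand, and induction finishes. The degenerate case where one colour contains no copy at all is handled by the trivial greedy bound you already describe. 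If you want to turn your sketch into a proof, that overlap lemma is the missing ingredient; see \cite{BurErdSpe}.
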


\begin{proof}[Proof of Theorem \ref{beslb}] Let $\chi: E(K_\N)\rightarrow \{R.B\}$ be a coloring. Let $n_1, n_2, \dots$ be an increasing sequence of positive integers with $n_{i+1}/n_i\rightarrow\infty$. Let $k_i$ be the maximum value such that $R(k_i\cdot F, k_i\cdot F)\leq n_{i+1}-n_i$. By Theorem \ref{buersp}, we have 
\[k_i=\left(\frac{1}{2|V(F)|-\alpha(F)}+o(1)\right)(n_{i+1}-n_i)=\left(\frac{1}{2|V(F)|-\alpha(F)}+o(1)\right)n_{i+1}.\]
There exist a family $\mathcal{F}_i$ of $k_i$ monochromatic disjoint copies of $F$ with vertices in $[n_i+1, n_{n+1}]$, all with the same color $C_i$. Choose a color $C$ which is equal to $C_i$ for infinitely many $i$. Then $H'=\cup_{C_i=C}\mathcal{F}_i$ is a copy of $\omega\cdot F$ with \[\limsup_{n\rightarrow\infty}\frac{|V(H)\cap [n]|}{n}\geq \limsup_{i:C_i=C}\frac{k_i|V(F)|}{n_{i+1}}=\frac{|V(F)|}{2|V(F)|-\alpha(F)}.\qedhere\]\end{proof}

\section{Open problems and questions}\label{sec:openprob}

Out of all the graphs $H$ for which the value of $\rho(H)$ is not known, the graph $\omega\cdot K_3$ presents an interesting challenge. A priori, the problem does not seem all that different from that of bounding $R(n\cdot K_3, n\cdot K_3)$, using the same finite-to-infinite trick that we used for Theorem \ref{beslb}. However, the coloring of the complete graph that achieves the finite bound does not extend naturally to a coloring of $K_\N$ without a dense $\omega\cdot K_3$. Additionally, it is hard to find triangles using the techniques of Theorem \ref{mainlb}, which look for structure in bipartite graphs.

\begin{prob} Improve either bound in $3/5\leq \rho(\omega\cdot K_3)\leq(21+\sqrt{12})/33\approx 0.74133$.\end{prob}

As noted in the introduction, $f$ depends on the solution of a certain optimization problem of Lipschitz functions. It would be helpful to remove such dependency, and obtain a closed formula for $f$ (perhaps the upper bound of \eqref{func}). In particular, if the upper bound of \eqref{func} is tight then we can observe that the behavior of $f$ changes at the value 3, which corresponds to $\limsup \mu(H,n)/H=3$ (the independent sets have similar expansion ratio as in the infinite ternary graph). The cause of this is that the optimal coloring of $K_\N$ changes.

\begin{prob} Find a closed formula for $f(x)$. In particular, prove or disprove that it matches the upper bound from \eqref{func}.\end{prob}

\bibliography{bibliog}
\bibliographystyle{abbrv}

\appendix
\section{Proof of Lemma \ref{asymt}}\label{annexlem}

We will show that the parameters from Lemma \ref{asymt} are essentially the same as $\Gamma^+$ and $\Gamma^-$, except the axes are rotated by 45 degrees. Thus we get that Lemma \ref{asymt} is equivalent to the following lemma, which is just a compacity argument away from completing the proof:

\begin{lemma}\label{rotat}
Let $\lambda\in(-1,1)$ and $\epsilon>0$. There exists $\gamma>0$ with the following property. For every 1-Lipschitz function $g:[0,+\infty)\rightarrow\mathbb{R}$ with $g(0)=0$, and every $m>0$, there exists $t\in[\gamma m,m]$ such that \[\Gamma^+_\lambda(g,t)+\Gamma^-_\lambda(g,t)\geq(h(\lambda)-\epsilon)t=\frac{2}{1-\lambda^2}\left(\frac{f\left(\frac{1+\lambda}{1-\lambda}\right)}{1-f\left(\frac{1+\lambda}{1-\lambda}\right)}-\lambda\right)t-\epsilon t.\]
\end{lemma}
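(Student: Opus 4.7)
The plan is to prove Lemma \ref{rotat} by contradiction, via a rescaling of $g$ combined with an Arzelà--Ascoli compactness argument.

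Suppose the statement fails for some $\epsilon > 0$: for every $n \geq 1$ there exist a 1-Lipschitz $g_n$ with $g_n(0) = 0$ and $m_n > 0$ such that
\[
\Gamma^+_\lambda(g_n,t) + \Gamma^-_\lambda(g_n,t) < (h(\lambda) - \epsilon)\,t \qquad \text{for every } t \in [m_n/n,\, m_n].
\]
The key symmetry is that the map $g \mapsto \tilde g := c^{-1} g(c\,\cdot\,)$ preserves both 1-Lipschitzness and the value at $0$, and satisfies $\Gamma^\pm_\lambda(\tilde g, t) = c^{-1}\Gamma^\pm_\lambda(g, c t)$. Applying this to $g_n$ with $c = m_n/n$, I may assume $m_n = n$ from the outset, so that the bound holds on the growing interval $[1, n]$. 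The $g_n$ are then uniformly bounded on each compact subset of $[0, +\infty)$ (by the Lipschitz estimate $|g_n(x)| \leq x$), so a diagonal subsequence extracted via Arzelà--Ascoli converges uniformly on compacts to some 1-Lipschitz $g$ with $g(0) = 0$.

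The core step is the lower semicontinuity $\Gamma^\pm_\lambda(g, t) \leq \liminf_k \Gamma^\pm_\lambda(g_{n_k}, t)$ for each fixed $t > 0$. Setting $x_k = \Gamma^+_\lambda(g_{n_k}, t)$, the assumed bound keeps $x_k \leq (h(\lambda) - \epsilon)t$, so along a subsequence $x_{k_j} \to L \in [0, (h(\lambda)-\epsilon)t]$. At each $x_k$ the first-crossing identity $\lambda x_k + g_{n_k}(x_k) = t$ holds by continuity, and uniform convergence on a compact set containing all $x_{k_j}$ yields $\lambda L + g(L) = t$ in the limit, so $\Gamma^+_\lambda(g, t) \leq L$. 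The same argument handles $\Gamma^-_\lambda$. Combined with $\liminf_k (a_k + b_k) \geq \liminf_k a_k + \liminf_k b_k$, and with the fact that the hypothesis applies to $g_{n_k}$ at any fixed $t \geq 1$ as soon as $n_k \geq t$, this gives
\[
\Gamma^+_\lambda(g, t) + \Gamma^-_\lambda(g, t) \leq (h(\lambda) - \epsilon)\,t \qquad \text{for every } t \geq 1.
\]
Taking $\limsup_{t \to \infty}$ yields a 1-Lipschitz $g$ with $g(0) = 0$ for which the limsup in \eqref{optiprob} is at most $h(\lambda) - \epsilon$, contradicting the definition of $h(\lambda)$ as the infimum of that quantity.

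I expect the semicontinuity step to be the only delicate part, but it is essentially forced by the first-crossing identity together with the bound, which confines all $x_k$ to a fixed compact set where uniform convergence of $g_{n_k}$ transfers directly to $\lambda x + g_{n_k}(x)$; the remaining ingredients are a routine rescaling and a standard diagonalization. One should also check that the bounds on $\Gamma^\pm$ preclude the pathological case of the limit being $+\infty$, but this is immediate from the same boundedness of the $x_k$.
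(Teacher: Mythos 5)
Your proof is correct and follows essentially the same approach as the paper's: contradiction, rescaling to normalize $m_n$, Arzel\`a--Ascoli on compacts, and transferring the bound to the limit $g$. The only difference is that you package the final step as a clean semicontinuity statement $\Gamma^\pm_\lambda(g,t)\leq\liminf_k\Gamma^\pm_\lambda(g_{n_k},t)$ via the first-crossing identity, whereas the paper argues the contrapositive by showing $\Gamma^\pm_\lambda(g_{i_j},(1-\delta)t)>\Gamma^\pm_\lambda(g,t)$ for large $j$ and small $\delta$; both are valid routes and yours is arguably tidier.
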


\begin{proof}[Proof of Lemma \ref{asymt}]

Define the function $z:[0,+\infty)\rightarrow \mathbb{R}$ as follows: for every $x$, let $z(x)=g(y)-y$, where $y$ is the unique value such that $x=g(y)+y$. This function is 1-Lipschitz: if $x_1, x_2$ are non-negative, and $y_1,y_2$ are the corresponding values of $y$, then \begin{align*}|z(x_1)-z(x_2)|&=\left|\left(g(y_1)-g(y_2)\right)-\left(y_1-y_2\right)\right|\\ &\leq \left|\left(g(y_1)-g(y_2)\right)+\left(y_1-y_2\right)\right|\\ &=|x_1-x_2|,\end{align*} since $y_1-y_2$ and $g(y_1)-g(y_2)$ do not have opposite signs.

For every $t>0$, let $y_t=\ell^+_\lambda(g,t)$. By continuity of $g$, we have $ g(\lambda y_t)-y_t=t$. Let $x_t=g(\lambda y_t)+\lambda y_t$. Then \begin{align*}
z(x_t)=&g(\lambda y_t)-\lambda y_t\\
=&g(\lambda y_t)-\lambda y_t-\frac{2\lambda}{1+\lambda}\left(g(\lambda y_t)-y_t-t\right)\\
=&\frac{1-\lambda}{1+\lambda}\left(g(\lambda y_t)+\lambda y_t\right)+\frac{2\lambda}{1+\lambda}t\\
=&\frac{1-\lambda}{1+\lambda}x_t+\frac{2\lambda}{1+\lambda}t,
\end{align*}
which rearranges to $\frac{\lambda-1}{\lambda+1}x_t+z(x_t)=\frac{2\lambda}{1+\lambda}t$. Thus, $x_t\geq\Gamma^+_{\frac{\lambda-1}{\lambda+1}}(z,\frac{2\lambda}{1+\lambda}t)$. On the other hand, let $y'_t=\ell^-_\lambda(g,t)$. We have $y'_t-\frac{g(y'_t)}{\lambda}=t$. Let $x'_t=g(y'_t)+y'_t$. Then \begin{align*}
z(x'_t)=&g(y'_t)-y'_t\\
=&g(y'_t)-y'_t+\frac{2\lambda}{1+\lambda}\left(y'_t-\frac{g(y'_t)}{\lambda}-t\right)\\
=&\frac{\lambda-1}{\lambda+1}\left(g(y'_t)+y'_t\right)-\frac{2\lambda}{1+\lambda}t\\
=&\frac{\lambda-1}{\lambda+1}x'_t-\frac{2\lambda}{1+\lambda}t,
\end{align*}

which rearranges to $\frac{\lambda-1}{\lambda+1}x'_t-z(x'_t)=\frac{2\lambda}{1+\lambda}t$. Thus, $x'_t\geq \Gamma^-_{\frac{\lambda-1}{\lambda+1}}\left(z,\frac{2\lambda}{1+\lambda}t\right)$. By Lemma \ref{rotat}, for every $m$ there exists a value $t\in[\gamma m, m]$, where $\gamma$ depends only on $\lambda$ and $\delta$, such that

\begin{align*}\left(h\left(\frac{\lambda-1}{\lambda+1}\right)-\delta\right)\frac{2\lambda}{1+\lambda}t\leq&x_t+x'_t\\
=&g(\lambda y_t)+\lambda y_t+g(y'_t)+y'_t\\
=&(y_t+t)+\lambda y_t+\lambda(y'_t-t)+y'_t\\
=&(\lambda+1)(y_t+y'_t)+(1-\lambda)t.\end{align*}

Rearranging this inequality, substituting $h$ and taking $\delta$ small enough we obtain $y_t+y'_t\geq (f(\lambda)-\epsilon)t$.\end{proof}

Finally, Lemma \ref{rotat} can easily be proved by showing that the restriction $t\in[\gamma m,m]$ is unnecessary:

\begin{proof}[Proof of Lemma \ref{rotat}]
Suppose that Lemma \ref{rotat} is false. For some $\lambda\in(-1,1)$ there are 1-Lipschitz functions $g_1, g_2, \dots$ with $g_i(0)=0$ such that $\frac{\Gamma^+_\lambda(g_i,t)+\Gamma^-(g_i,t)}{t}<h(\lambda)-\epsilon$ for every $t\in [\frac{m_i}{i},m_i]$. By scaling each fraction as $\bar g_i(x)=\frac{\sqrt{i}}{m_i}g_i\left(\frac{m_i}{\sqrt{i}}x\right)$, we can assume that $m_i=\sqrt{i}$ for every $i\in\N$.

Because the sequence $g_i(x)$ is bounded for every $x$, there is a subsequence $g_{i_1}, g_{i_2},\dots$ which is uniformly convergent on every compact subset of $[0,+\infty)$, and has limit $g(x)$. This function is also 1-Lipschitz and has $g(0)=0$, so by definition there exists $t>1$ such that $\Gamma^+_\lambda(g,t)+\Gamma^-_\lambda(g,t)\geq (h(\lambda)-\epsilon/2)t$.

Let $q=\Gamma^+_\lambda(g,t)$. Since $g$ satisfies that $\lambda x+g(x)< t$ for every $0\leq x\leq q$, we also have $\lambda x+g_{i_j}(x)<(1-\delta)t$ for all $x\in [0,q]$, all $\delta>0$ and all $j>J(\delta)$ large enough, therefore $\Gamma^+_\lambda(g_{i_j},(1-\delta)t)>q$. For the same reason, $\Gamma^+_\lambda(g_{i_j},(1-\epsilon/4)t)>\Gamma^-_\lambda(g_{i_j},(1-\delta)t)>\Gamma^-_\lambda(g,t)$. We obtain the inequality\[\frac{\Gamma^+_\lambda(g_{i_j},(1-\delta)t)+\Gamma^-_\lambda(g_{i_j},(1-\delta)t)}{(1-\delta)t}>\frac{\Gamma^+_\lambda(g,t)+\Gamma^-_\lambda(g,t)}{(1-\delta)t}\geq\frac{h(\lambda)-\epsilon/2}{1-\delta}>h(\lambda)-\epsilon\] for $\delta$ small enough. To reach a contradiction, simply observe that $(1-\delta)t\in\left[\frac{1}{\sqrt{i_j}},\sqrt{i_j}\right]=\left[\frac{m_{i_j}}{i_j},m_{i_j}\right]$ for all $j$ large enough.\end{proof}

\section{Properties of $f(\lambda)$}\label{appf}

We will prove four propositions regarding $f(x)$. Propositions \ref{fub} and \ref{flb} together imply \eqref{func}, while Proposition \ref{exactf} means that the upper bound is tight for $x\in[0,1]$.

\begin{prop}\label{continuity} $f$ is non-increasing and continuous.\end{prop}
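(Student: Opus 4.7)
The plan rests on using the equivalent formulation of $f$ obtained from the coordinate rotation in the proof of Lemma~\ref{asymt}. That argument implicitly shows
\[\frac{f(\lambda)}{1-f(\lambda)} \;=\; \inf_{g}\;\limsup_{t\to\infty}\;\frac{\ell^+_\lambda(g,t)+\ell^-_\lambda(g,t)}{t},\]
where $g$ ranges over $1$-Lipschitz non-decreasing continuous $g\colon[0,+\infty)\to[0,+\infty)$ with $g(0)=0$. Both assertions of the proposition will be proved via this formulation on $(0,+\infty)$; continuity at the endpoints $\lambda=0$ and $\lambda=+\infty$ follows from the bounds \eqref{func} (proved in the following propositions) together with the limit statements built into Definition~\ref{defif}.

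For monotonicity, I would fix admissible $g$ and $t>0$ and observe that both $\ell^+_\lambda(g,t)$ and $\ell^-_\lambda(g,t)$ are non-increasing in $\lambda$ directly from the defining conditions. Indeed, for $\lambda_1\leq\lambda_2$ and $x\geq 0$, monotonicity of $g$ gives $g(\lambda_1 x)\leq g(\lambda_2 x)$, so any $x$ witnessing $g(\lambda_1 x)-x\geq t$ also witnesses $g(\lambda_2 x)-x\geq t$; likewise, $g(x)\geq 0$ gives $g(x)/\lambda_1\geq g(x)/\lambda_2$, so the set defining $\ell^-_\lambda$ grows with $\lambda$. Pointwise monotonicity is preserved by limsup and infimum, so $\lambda\mapsto f(\lambda)/(1-f(\lambda))$ is non-increasing; since $x\mapsto x/(1-x)$ is strictly increasing on $[0,1)$, $f$ itself is non-increasing.

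For continuity, I would prove a uniform-in-$g$ Lipschitz-type perturbation estimate. Fix $\lambda_0>0$ and consider $\lambda\in(\lambda_0/2,2\lambda_0)$. The $1$-Lipschitz property yields $|g(\lambda x)-g(\lambda_0 x)|\leq|\lambda-\lambda_0|\cdot x$, and from $|g(x)|\leq x$ one has the trivial bound $\ell^\pm_\lambda(g,t)=O(t)$ with an implicit constant depending only on $\lambda_0$. Combining these, I would show that for some local constant $C=C(\lambda_0)$ and all small $|\lambda-\lambda_0|$,
\[\ell^\pm_{\lambda_0}\bigl(g,\,t(1+C|\lambda-\lambda_0|)\bigr)\leq \ell^\pm_\lambda(g,t)\leq \ell^\pm_{\lambda_0}\bigl(g,\,t(1-C|\lambda-\lambda_0|)\bigr)\]
uniformly over admissible $g$. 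Dividing by $t$ and invoking the asymptotic scaling of the inf-limsup expression then shows that $f(\lambda)/(1-f(\lambda))$ is locally Lipschitz in $\lambda$, hence $f$ is continuous on $(0,+\infty)$.

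The main obstacle is preserving uniformity in $g$ across the perturbation, so that the bound survives passing to the infimum: the witnessing $g$ may well depend on $\lambda$, and without the $1$-Lipschitz constraint nothing would prevent $\ell^\pm$ from jumping as $\lambda$ varies. A secondary technicality is the rigorous identification between the $\Gamma^\pm_\gamma$ formulation used to define $h$ and the $\ell^\pm_\lambda$ formulation used above, which needs the inverse of the rotation $(y,g(y))\mapsto(g(y)+y,g(y)-y)$ to carry admissible $g$ to admissible $g$; this is essentially contained in the opening lines of the proof of Lemma~\ref{asymt}.
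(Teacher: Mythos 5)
Your approach is a genuinely different route from the paper's: you pull the argument back through the rotation of Lemma~\ref{asymt} into the $\ell^\pm_\lambda$ formulation, exploit the monotonicity and non-negativity of the test functions $g$ directly on the defining conditions of $\ell^\pm_\lambda$, and then attempt a perturbation estimate. The monotonicity half, \emph{if} the reformulation were valid, would indeed be cleaner than the paper's comparison of $\Gamma^+_\gamma$ with $\Gamma^+_\tau$. But there is a genuine gap, and it sits precisely at the reformulation you lean on.

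First, the claimed identity
\[
\frac{f(\lambda)}{1-f(\lambda)} \;=\; \inf_{g}\;\limsup_{t\to\infty}\;\frac{\ell^+_\lambda(g,t)+\ell^-_\lambda(g,t)}{t}
\]
is not established by the paper, and the class of admissible $g$ you write down is incorrect. Lemma~\ref{asymt} only proves the inequality $\geq$: for \emph{every} non-decreasing continuous $g:[0,\infty)\to[0,\infty)$ with $g(0)=0$, the limsup is at least $f(\lambda)/(1-f(\lambda))$. That gives a lower bound on the infimum; it says nothing about a matching upper bound, which is what you need to conclude anything about $f$ itself from properties of the $\ell^\pm$-expression. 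Moreover, the correct class in the $\ell^\pm$ world is all non-decreasing continuous $g$ with $g(0)=0$, \emph{without} a Lipschitz restriction: the rotation $y\mapsto (g(y)+y,\,g(y)-y)$ sends the slope $g'\in[0,\infty)$ to $z'=\frac{g'-1}{g'+1}\in[-1,1)$, so to (approximately) recover all $1$-Lipschitz $z$ --- which is the class over which $h(\gamma)$ is defined --- you need unbounded $g'$. If you restrict to $1$-Lipschitz $g$, the rotation only produces $z$ with $z'\in[-1,0]$, a strict subclass, and the resulting infimum is in general strictly larger. Your monotonicity argument happens to only use non-decreasingness and non-negativity of $g$, so it would survive with the corrected class, but only after the equivalence itself is actually proved (including the converse direction via the inverse rotation, with the usual approximation to handle slope-$1$ segments of $z$).

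Second, and more seriously, your continuity argument does not survive the correction to the class of $g$. Both estimates you invoke --- $|g(\lambda x)-g(\lambda_0 x)|\leq|\lambda-\lambda_0|\,x$ and $|g(x)|\leq x$ --- are precisely the $1$-Lipschitz property of $g$, and they simply fail for general non-decreasing $g$: a non-decreasing $g$ can be arbitrarily steep, so the uniform-in-$g$ perturbation bound is not available. This is the real content of why the paper works directly in the $\Gamma^\pm_\gamma$ formulation, where the underlying test functions $z$ \emph{are} genuinely $1$-Lipschitz and the Lipschitz-type perturbation estimate is legitimate. The paper's proof of continuity establishes exactly such a perturbation bound ($h(\gamma-\delta)\leq h(\gamma)/(1-\delta h(\gamma))$) in the $z$-world; porting that argument to the $g$-world requires carrying the Lipschitz control through the rotation, which does not yield $1$-Lipschitzness of $g$. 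So as written, the continuity half of your proposal does not go through.
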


\begin{proof}
Let $-1<\gamma<\tau<1$. Let $\epsilon>0$. Choose $g$ such that $\limsup\frac{\Gamma^+_\gamma(g,t)+\Gamma^-_\gamma(g,t)}{t}\leq h(\gamma)+\epsilon$. Let $z(t)=\Gamma^+_\gamma(g,t)$ and $\bar z(t)=\Gamma^+_\tau(g,t)$. We have $\gamma z(t)+g(z(t))=t=\tau\bar z(t)+g(\bar z(t))$. Because $\tau z(t)+g(z(t))>\gamma z(t)+g(z(t))=t$, by definition of $\Gamma^+_\tau$ we have $\bar z(t)\leq z(t)$. Thus \[z(t)-\bar z(t)\geq g(z(t))-g(\bar z(t))=\tau\bar z(t)-\gamma z(t)\] which rearranges to $(\tau+1)\bar z(t)\leq (\gamma+1)z(t)$. Similarly, if $z'(t)=\Gamma^-_\gamma(g,t)$ and $\bar z'(t)=\Gamma^-_\tau(g,t)$, then $(\tau+1)z'(t)\leq (\gamma+1)z'(t)$. We take upper limits.

\begin{align*}(\tau+1)h(\tau)\leq& (\tau+1)\limsup\limits_{t\rightarrow\infty}\frac{\bar z(t)+\bar z'(t)}{t}\\ \leq& (\gamma+1)\limsup\limits_{t\rightarrow\infty}\frac{z(t)+z'(t)}{t}\\ \leq& (\gamma+1)h(\gamma)+\epsilon\end{align*} Since this inequality is valid for every $\epsilon>0$, we find that the function $(\gamma+1)h(\gamma)$ is non-increasing on $\gamma$. On the other hand, we have $t=\gamma z(t)+g(z(t))\leq (\gamma+1)z(t)$, or equivalently $z(t)\geq\frac{t}{\gamma+1}$. Similarly $z'(t)\geq \frac{t}{\gamma+1}$. This leads to $h(\gamma)\geq\frac{2}{\gamma+1}$. We conclude that the function $\frac{1-\gamma^2}{2}h(\gamma)+(\gamma+1)$ is non-increasing, because  
\begin{align*}\frac{1-\tau}{2}(\tau+1)h(\tau)+(\tau+1)\leq& \frac{1-\tau}{2}(\gamma+1)h(\gamma)+(\tau+1)\\\leq&\frac{1-\tau}{2}(\gamma+1)h(\gamma)+(\tau+1)-\left(\frac{(\gamma+1)h(\gamma)}{2}-1\right)(\tau-\gamma)\\ =&\frac{1-\gamma}{2}(\gamma+1)h(\gamma)+(\gamma+1).\end{align*} After the change of variables, $\gamma=\frac{\lambda-1}{\lambda+1}$, this is equivalent to $f(\lambda)$ being non-decreasing.

To show that $f$ is continuous, we will show that $h$ is continuous. We will show that, for every $-1<\gamma<1$ and every $\epsilon>0$ there exists $\delta>0$  such that $h(\gamma-\delta)-h(\gamma)>\epsilon$. Because $h$ is non-increasing, this implies continuity.

Choose $\xi>0$ very small, and choose $g$ such that $\frac{\Gamma^+_\gamma(g,t)+\Gamma^-_\gamma(g,t)}{t}\leq h(\gamma)+\xi$ for $t$ large enough.
 Let $z(t)=\Gamma^+_\gamma(g,t)$. We have $\gamma z(t)+g(z(t))=t$. If $t$ is large enough, then $(\gamma-\delta) z(t)+g(z(t))\geq t-\delta z(t)\geq (1-h(\gamma)+\xi)t$. If $\bar z(t)=\Gamma^+_{\gamma-\delta}(g,t)$, then for $t$ large enough we have $\bar z(t)\leq z\left(\frac{t}{1-(h(\gamma)+\xi)\delta}\right)$. As before, the same argument works for $\Gamma^-$, and produces (taking $\xi\rightarrow 0$)\[h(\gamma-\delta)\leq\frac{h(\gamma)}{1-\delta h(\gamma)},\] which is smaller than $h(\gamma)+\epsilon$ for $\delta$ small enough.\end{proof}

\begin{prop}\label{fub}
\begin{equation*}f(\lambda) \leq \left\{
  \begin{array}{ccc}
    \frac{2\lambda^2+3\lambda+7+2\sqrt{\lambda+1}}{4\lambda^2+4\lambda+9} & \text{for} & 0\leq\lambda< 3,\\
    & & \\
    \frac{\lambda+1}{2\lambda} & \text{for} & x \geq 3.
  \end{array}
\right.\end{equation*}
\end{prop}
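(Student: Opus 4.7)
The plan is to upper-bound $h(\gamma)$ at $\gamma = (\lambda-1)/(\lambda+1)$ by exhibiting explicit 1-Lipschitz test functions $g$ with $g(0)=0$. Since $X = \frac{2\lambda}{(1+\lambda)^2}h(\gamma) + \frac{2\lambda}{1+\lambda}$ is increasing in $h$ and $f(\lambda) = 1 - 1/X$, any such upper bound on $h$ translates directly into an upper bound on $f$.

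For the second range ($\lambda \geq 3$, equivalently $\gamma \geq 1/2$), I would take $g \equiv 0$. Then $\Gamma^\pm_\gamma(g,t) = t/\gamma$ for every $t$, so $h(\gamma) \leq 2/\gamma$. Substituting and simplifying with $\gamma = (\lambda-1)/(\lambda+1)$ gives $X = 2\lambda/(\lambda-1)$ and hence $f(\lambda) \leq (\lambda+1)/(2\lambda)$.

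For the first range ($0 \leq \lambda < 3$), I would use a self-similar sawtooth. Fix $r > 1$ and set $c = (r-1)/(r+1)$; define $g$ as the piecewise linear interpolation of $(0,0)$ and the points $(r^i, (-1)^i c r^i)$ for $i \geq 0$. The particular value of $c$ is exactly what makes the slopes between consecutive breakpoints equal to $\pm 1$, so $g$ is 1-Lipschitz. A direct computation shows that $\gamma x + g(x)$ has local maxima $r^{2k}(\gamma+c)$ at $x = r^{2k}$ and local minima $r^{2k+1}(\gamma-c)$ at $x = r^{2k+1}$, and the equation $\gamma x + g(x) = t$ admits an affine-in-$t$ solution on each geometric sub-interval; similarly for $\gamma x - g(x)$ with the parities swapped. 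Using the identity $(r+1)(1+c) = 2r$, the two formulas combine to $\Gamma^+_\gamma(g,t) + \Gamma^-_\gamma(g,t) = 2(r^j + t)/(\gamma+1)$ for an integer $j$ depending on the interval of $t$, and the $\limsup_{t\to\infty}$ of this ratio, attained at each breakpoint $t = r^j(\gamma+c)$, equals $\frac{2(r+\gamma+c)}{(\gamma+1)(\gamma+c)}$. Optimizing over $r > 1$ via calculus reduces to the quadratic $(\gamma+1)r^2 - 2(1-\gamma)r - (1-\gamma) = 0$, whose positive root is $r^* = \frac{(1-\gamma) + \sqrt{2(1-\gamma)}}{\gamma+1}$. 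Crucially, $r^* > 1$ is equivalent to $\gamma < 1/2$, which is precisely the range $\lambda < 3$; this also explains why the two formulas in the statement meet at $\lambda = 3$. Substituting $r^*$ yields $h(\gamma) \leq \frac{2[\gamma^2 + \gamma + 4 + 2\sqrt{2(1-\gamma)}]}{(\gamma+1)^3}$.

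The remaining step is purely algebraic, and I expect it to be the most tedious part of the argument: inserting this bound into $X$, using $\sqrt{2(1-\gamma)} = 2/\sqrt{\lambda+1}$, and simplifying to recover the stated form. With the substitution $u = \sqrt{\lambda+1}$, the factorization $4\lambda^2 + 4\lambda + 9 = (2u^2+4u+3)(2u^2-4u+3)$ makes the cross-multiplication tractable; after clearing denominators, the target identity reduces to a polynomial equality in $u$ that can be verified by direct expansion.
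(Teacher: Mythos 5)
Your proposal is correct and follows essentially the same route as the paper: for $\gamma\geq 1/2$ you take $g\equiv 0$, and for $\gamma<1/2$ you build a self-similar geometric sawtooth with slopes $\pm 1$; your optimal ratio $r^*=\frac{(1-\gamma)+\sqrt{2(1-\gamma)}}{\gamma+1}$ is exactly the paper's $\sigma$, and your bound $h(\gamma)\leq \frac{2(\gamma^2+\gamma+4+2\sqrt{2(1-\gamma)})}{(\gamma+1)^3}$ coincides with the paper's (note $\sqrt{32(1-\gamma)}=4\sqrt{2(1-\gamma)}$). The only cosmetic difference is that you place the sawtooth extrema at the geometric points $r^i$ while the paper places the zeros at $\sigma^i$ and extrema at the midpoints, but by scale-invariance of $h$ these give the same answer.
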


\begin{proof}

Let $\gamma=\frac{\lambda-1}{\lambda+1}$. All we need to do is find a function $g(x)$ for which \begin{equation*}
\limsup_{t\rightarrow\infty}\frac{\Gamma^+_\gamma(g,t)+\Gamma^-_\gamma(g,t)}{t}\leq\left\{
  \begin{array}{ccc}
    \frac{2\gamma^2+2\gamma+8+\sqrt{32(1-\gamma)}}{(\gamma+1)^3} & \text{for} & \gamma\in\left(-1,\frac12\right)\\
    & & \\
    \frac2\gamma & \text{for} & \gamma \in \left[\frac12,1\right),\end{array}\right.
    \end{equation*}
    
    to show an upper bound on $h(\gamma)$, and consequently on $f(\lambda)$.
    
    For $\gamma\in[\frac12,1)$, take $g(x)=0$. Then $\Gamma^+_\gamma(g,t)=\Gamma^-_\gamma(g,t)=\frac t\gamma$.
    
    For $\gamma\in(-1, \frac12)$, let $\sigma=\frac{1-\gamma+\sqrt{2(1-\gamma)}}{1+\gamma}$. We define $g$ as follows: for every $x>0$, we have $|g(x)|=\min\{|x-\sigma^i|:i\in\mathbb{Z}\}$. $g(x)$ is non-negative if $x\in[\sigma^i,\sigma^{i+1})$ for some odd $i$ and non-positive otherwise. This creates a 1-Lipschitz function in which the derivative is always $\pm1$, and changes sign precisely at the values $x=\frac{\sigma^i+\sigma^{i+1}}{2}$.
    
    This function $g$ satisfies that $\Gamma^+_\gamma(g,t)$ and $\Gamma^-_\gamma(g,t)$ are piecewise linear, with sudden increases at the points at which the respective functions take value $\frac{\sigma^i+\sigma^{i+1}}{2}$. By symmetry, it is at these points that $\frac{\Gamma^+_\gamma+\Gamma^-_\gamma}{2}$ is maximized, and one can check that the value is 
\[\lim\limits_{t\rightarrow (t^*)^+}\frac{\Gamma^+_\gamma(g,t)+\Gamma^-_\gamma(g,t)}{t}=\frac{2(\gamma(\sigma+1)+\sigma^2+2\sigma-1)}{(\gamma+1)((\gamma+1)\sigma+\gamma-1)}=\frac{2\gamma^2+2\gamma+8+\sqrt{32(1-\gamma)}}{(\gamma+1)^3},\] where $t^*=\gamma\frac{\sigma^i+\sigma^{i+1}}{2}+\frac{\sigma^{i+1}-\sigma^i}{2}$.\end{proof}
    
\begin{prop}\label{exactf}
\[f(\lambda)=\frac{2\lambda^2+3\lambda+7+2\sqrt{\lambda+1}}{4\lambda^2+4\lambda+9} \hskip 1cm \forall  0\leq\lambda\leq 1\]
\end{prop}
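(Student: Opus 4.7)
The upper bound is provided by Proposition \ref{fub}, so only the matching lower bound remains for $\lambda \in [0,1]$. Via the definition of $f$, this reduces to showing that for $\gamma = (\lambda - 1)/(\lambda + 1) \in [-1, 0]$,
\[ h(\gamma) \geq \frac{2\gamma^2 + 2\gamma + 8 + \sqrt{32(1-\gamma)}}{(\gamma+1)^3}, \]
i.e., that for every 1-Lipschitz $g : [0, +\infty) \to \mathbb{R}$ with $g(0) = 0$, the limsup in the definition of $h(\gamma)$ is at least this quantity. The plan is to show that the sawtooth function used in Proposition \ref{fub}, with geometric ratio $\sigma = (1 - \gamma + \sqrt{2(1-\gamma)})/(1+\gamma)$, is extremal among 1-Lipschitz functions with $g(0)=0$.

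Fix such a $g$ and write $z^\pm(t) = \Gamma^\pm_\gamma(g, t)$; by continuity these satisfy $\gamma z^+(t) + g(z^+(t)) = t$ and $\gamma z^-(t) - g(z^-(t)) = t$ (whenever finite). The 1-Lipschitz condition $|g(z^+(t)) - g(z^-(t))| \leq |z^+(t) - z^-(t)|$ combined with the defining equations yields the pointwise trade-off
\[ |2t - \gamma(z^+(t) + z^-(t))| \leq |z^+(t) - z^-(t)|. \]
This alone is not sharp, but it links the sum we wish to lower bound to the asymmetry of the pair $(z^+,z^-)$, which can be controlled by tracking the evolution of $g$ between these two points. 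Trivial and boundary cases (for example, when one of $z^{\pm}$ is eventually infinite) are handled by the weaker bound $h(\gamma) \geq 2/(\gamma+1)$ established in the proof of Proposition \ref{continuity}.

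I would then proceed by tracking $(z^+(t), z^-(t))$ along a geometric sequence of scales $t_0 < t_1 < \cdots$ chosen so that the successive ratios mimic the growth factor $\sigma$ of the sawtooth. At each scale the defining equations together with the Lipschitz bound should force either $z^+$ or $z^-$ (alternately) to advance by at least a factor $\sigma$, and summing a geometric series over the scales produces the claimed rate, with equality precisely when $g$ is locally the sawtooth between consecutive scales. The main obstacle is that a general $g$ may deviate from a sawtooth either by oscillating too rapidly or too slowly, producing unequal advancements across the two families. The cleanest way to unify both cases is a compactness argument: passing to a locally uniform subsequential limit of near-extremizers (after a rescaling as in the proof of Lemma \ref{rotat}) produces a 1-Lipschitz $g^\star$ attaining the infimum defining $h(\gamma)$; a variational argument then forces $g^\star$ to saturate the Lipschitz bound on alternating intervals, i.e., to be a sawtooth up to rescaling. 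This reduces the lower bound to the exact computation already carried out in Proposition \ref{fub}.
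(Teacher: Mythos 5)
Your overall plan is the right shape — reduce to showing the sawtooth with ratio $\sigma$ is extremal — but the two load-bearing steps are not only unproven, one of them has a genuine flaw. The compactness step is the real problem: the quantity $\limsup_{t\to\infty}\frac{\Gamma^+_\gamma(g,t)+\Gamma^-_\gamma(g,t)}{t}$ is \emph{not} lower semicontinuous under locally uniform convergence. A sequence $g_n$ of near-minimizers can converge locally uniformly to some $g^\star$ whose limsup (which depends only on behavior at infinity) is strictly larger than $h(\gamma)$; the bad behavior can run off to infinity. The rescaling trick in Lemma~\ref{rotat} does not rescue this — there it is used for the opposite purpose (to show the restricted window $[\gamma m,m]$ is no loss), and it works because the windows eventually exhaust $[0,\infty)$ after normalization, which is a different structure than what you need here. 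Without a minimizer you cannot even start the variational argument. And even granting a minimizer, the claim that "a variational argument forces $g^\star$ to saturate the Lipschitz bound on alternating intervals" is exactly the substance of the proposition; you have restated the goal, not proved it. The pointwise trade-off $|2t-\gamma(z^+(t)+z^-(t))|\leq|z^+(t)-z^-(t)|$ is correct but, as you acknowledge, far too weak — it must be integrated over scales, and the statement that at each geometric scale one of $z^\pm$ "should advance by at least a factor $\sigma$" is asserted without an argument and is not obviously true for an arbitrary 1-Lipschitz $g$.

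The paper avoids both issues by discretizing rather than passing to a limit. It first shows that any $g$ can be replaced, up to a bounded shift in $t$, by a piecewise linear function with slopes $\pm 1$; then it removes superfluous peaks and valleys so that every local extremum $x_i$ of $g$ is realized as some $\Gamma^{\pm}_\gamma(g,t_i)$. This parameterizes the problem by the increasing sequence $\{t_i\}$ of "turning times," and $\Gamma^{\pm}_\gamma(g,t)$ can be written explicitly as a linear combination of the $t_j$'s with geometric weights $\left(\frac{1-\gamma}{1+\gamma}\right)^{m-j}$. The bound $\sup_t s(\{t_i\},t)/t\leq S$ then translates into a system of linear inequalities in the $t_i$ (an "$S$-good" sequence), and a sequence-manipulation induction shows any $S$-good sequence is dominated by the sequence $T_i$ solving the recurrence with equality. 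The extremal constant is identified as the smallest $S$ for which the characteristic polynomial of that linear recurrence has real roots, since complex roots force $T_i\leq 0$ eventually. This is where the explicit value comes from, and it replaces your unproved variational step by a concrete algebraic computation.
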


\begin{proof}

Let $\gamma=\frac{\lambda-1}{\lambda+1}$. If two 1-Lipschitz functions with $g_1(0)=g_2(0)=0$ satisfy $|g_1(x)-g_2(x)|\leq 1$ for every $x$, then $\Gamma^+_\gamma(g_1,t)=\min\{x:\gamma x+g_1(x)\geq t\}\geq\min\{x:\gamma x+g_2(x)\geq t-1\}=\Gamma^+_\gamma(g_2,t-1)$. Similarly, $\Gamma^-_\gamma(g_1,t)\geq \Gamma^-_\gamma(g_2, t-1)$. This implies that \[\limsup_{t\rightarrow\infty}\frac{\Gamma^+_\gamma(g_1,t)+\Gamma^-_\gamma(g_2,t)}{t}=\limsup_{t\rightarrow\infty}\frac{\Gamma^+_\gamma(g_1,t)+\Gamma^-_\gamma(g_2,t)}{t}\] 

For this reason, we can focus our attention on continuous functions $g$ which are piecewise linear, the slope of each piece is 1 or -1 and the length of each piece is at least $\frac12$. Indeed, let $g_1$ be a 1-Lipschitz function with $g_1(0)=0$. Define $g_2$ by starting at $g_2(0)=0$. Take $g_2(x)$ in $[0, x_1]$, where $x_1$ is the minimum value for which $g_1(x_1)=  x_1-1$. Then continue with slope -1 in the interval $[x_1, x_2]$ until the minimum value of $x_2$ for which we would have $g_2(x_2)=g_1(x_2)-1$. Proceed alternating the sign of the slope every time $g_2$ reaches distance 1 from $g_1$. Clearly we always have $|g_1(x)-g_2(x)|\leq 1$, and it is easy to check that each piece has length at least $\frac12$ (it could be that some piece has infinite length).

We can further suppose that the first piece has slope 1. Let $\ell_1, \ell_2, \dots$ be the length of the pieces, and $x_i$ be the end of the $i$-th piece. The points $x_i$ are local maxima if $i$ is odd and minima if $i$ is even. If for some odd $i$ we have $\gamma x_i+g(x_i)\leq \gamma x_{i-2}+g(x_{i-2})$ (which means that $x_i$ does not equal $\Gamma^+_\gamma(g,t)$ for any $t$), we can ``remove the peak'' by extending the intervals $[x_{i-2}, x_{i-1}]$ and $[x_{i+1}, x_{i+2}]$ until they intersect. The new function $\bar g$ satisfies $\Gamma^+_\gamma(\bar g,t)\leq \Gamma^+_\gamma(g,t)$ and $\Gamma^-_\gamma(\bar g,t)=\Gamma^-_\gamma(g,t)$, because $\Gamma^+_\gamma(g,t)$ will always lie on an increasing piece and $\Gamma^-_\gamma(g,t)$ lies on a decreasing piece.

We can similarly ``remove a valley'', if for some even $i$ the value $x_i$ does not equal $\Gamma^-_\gamma(g,t)$ for any $t$. Applying these two procedures repeatedly, always to the first peak or valley that can be removed, we produce a function in which every $x_i$ is either $\Gamma^+_\gamma(g,t_i)$ for odd $i$ or $\Gamma^-_\gamma(g,t_i)$ for even $i$. The function must still have infinitely many peaks and valleys, otherwise we have $\Gamma^+_\gamma(g,t)=\infty$ or $\Gamma^-_\gamma(g,t)=\infty$ for large enough $t$.

Let $\mu>\limsup_{t\rightarrow\infty}\frac{\Gamma^+_\gamma(g,t)+\Gamma^-_\gamma(g,t)}{t}$. After scaling, we can suppose that $\frac{\Gamma^+_\gamma(g,t)+\Gamma^-_\gamma(g,t)}{t}\leq\mu$ for every $t\geq 1$ (because of the scaling, we might lose the property that every interval has length larger than $\frac12$, but it will still be larger than some constant). Let us see the relation that the $t_i$ must satisfy. We have \[x_i=\sum\limits_{j=1}^i\ell_i\hskip 1cm g(x_i)=\sum\limits_{j=1}^i(-1)^{j+1}\ell_i\hskip 1cm t_i=\gamma x_i+(-1)^{i+1}g(x_i).\]

We can express $x_i$ in terms of $t_1, t_2, \dots, t_i$. Combining the identities above we have $t_i=\sum_{j=1}^i(\gamma+(-1)^{j-i})\ell_j$, and therefore $t_i+t_{i-1}=(\gamma+1)\ell_i+2\gamma\sum_{j=1}^{i-1}\ell_j=(\gamma+1)x_i+(\gamma-1)x_{i-1}$, which produces the recursion $x_i=\frac{1-\gamma}{1+\gamma}x_{i-1}+\frac{1}{1+\gamma}(t_i+t_{i-1})$. Together with $x_1=\frac{1}{1+\gamma}t_1$, the solution is \[x_i=\frac{1}{1+\gamma}t_i+\sum\limits_{j=1}^{i-1}\frac{2}{1-\gamma^2}\left(\frac{1-\gamma}{1+\gamma}\right)^{i-j}t_j.\]

Based on these values, we can compute $\Gamma^+_\gamma(g,t)$ and $\Gamma^-_\gamma(g,t)$. The former must lie in the interval $[x_{i-1},x_i]$, where $i$ is the smallest odd value for which $t_i\geq t$. Moreover, it is the unique value $x\in[x_{i-1}, x_i]$ for which $t=\gamma x+g(x)=\gamma x+(g(x_{i})+x-x_{i})=t_i-(\gamma+1)(x_i-x)$. If $m_o=m_o(\{t_i\}_{i=1}^\infty, t)$ is the smallest odd $i$ such that $t_i\geq t$, then \[\Gamma^+_\gamma(g,t)=t_{m_o}-(\gamma+1)(x_{m_0}-x)=\frac{1}{1+\gamma}t+\sum\limits_{j=1}^{m_o-1}\frac{2}{1-\gamma^2}\left(\frac{1-\gamma}{1+\gamma}\right)^{m_o-j}t_j.\]

Similarly, if $m_e=m_e(\{t_i\}_{i=1}^\infty, t)$ is the smallest even $i$ such that $t_i\geq t$, then \[\Gamma^-_\gamma(g,t)=\frac{1}{1+\gamma}t+\sum\limits_{j=1}^{m_e-1}\frac{2}{1-\gamma^2}\left(\frac{1-\gamma}{1+\gamma}\right)^{m_e-j}t_j.\]

Define \[s(\{t_i\}_{i=1}^\infty,t)=\frac{2}{1+\gamma}t+\sum\limits_{j=1}^{m_o-1}\frac{2}{1-\gamma^2}\left(\frac{1-\gamma}{1+\gamma}\right)^{m_o-j}t_j+\sum\limits_{j=1}^{m_e-1}\frac{2}{1-\gamma^2}\left(\frac{1-\gamma}{1+\gamma}\right)^{m_e-j}t_j.\]

We are then interested in the lowest value that $\sup_{t\geq 1}\frac{s(\{t_i\}_{i=1}^{\infty},t)}{t}$ can take. Given a non-negative unbounded sequence $\{t_i\}_{i=1}^\infty$, let $i_1<i_2<\dots$ be the set of indices $i$ for which $t_i\geq 1$ and $t_i>t_j$ for every $j<i$. One can check that $s(\{t_{i_j}\}_{j=1}^\infty,t)\leq s(\{t_i\}_{i=1}^\infty,t)$ for every $t\geq 1$ (this is because $\frac{1-\gamma}{1+\gamma}>1$). Therefore, we can assume that $t_i$ is increasing and $t_1\geq 1$. In this case, and setting $t_0=0$,

\[\sup\limits_{t\geq 1}\frac{ s(\{t_i\}_{i=1}^\infty,t)}{t}\geq\sup\limits_{i\geq 1}\frac{2}{1+\gamma}+\sum\limits_{j=1}^{i+1}\frac{2}{1-\gamma^2}\left(\frac{1-\gamma}{1+\gamma}\right)^{i-j+2}\frac{t_j+t_{j-1}}{t_i},\] since $\{m_o(\{t_i\}_{i=1}^\infty, t_i+\epsilon),m_e(\{t_i\}_{i=1}^\infty, t_i+\epsilon)\}=\{i+1,i+2\}$ for $\epsilon>0$ small enough.

A sequence is called $S$-good if it satisfies\begin{equation}\label{Sgood}St_i\geq\frac{2}{1+\gamma} t_i+\sum\limits_{j=1}^{i+1}\frac{2}{1-\gamma^2}\left(\frac{1-\gamma}{1+\gamma}\right)^{i-j+2}(t_j+t_{j-1})\hskip 1cm \forall i\geq 1,\end{equation} which is a necessary condition for $\sup_{t\geq 1}\frac{s(\{t_i\}_{i=1}^{\infty},t)}{t}\geq S$. Suppose that a sequence is $S$-good. Consider the recurring sequence where $T_1=t_1$ and \begin{equation}\label{recurT}ST_i=\frac{2}{1+\gamma} T_i+\sum\limits_{j=1}^{i+1}\frac{2}{1-\gamma^2}\left(\frac{1-\gamma}{1+\gamma}\right)^{i-j+2}(T_j+T_{j-1}).\end{equation}

Observe that \eqref{recurT} is used to define $T_{i+1}$ from the previous entries, rather than $T_i$. 

\begin{claim} If $\{t_i\}_{i=1}^\infty$ is $S$-good, we have $t_i\leq T_i$ for every $i\geq 1$.\end{claim}
\begin{proof}[Proof of Claim] Suppose that there exists an index $i$ for which $t_i>T_i$. Let $u$ and $v$ be the smallest indices such that $t_u\neq T_u$ and $t_v>T_v$, respectively. Clearly we have $u\geq 2$. We will prove our claim by induction on $v-u$. By definition of $T_i$ we have $u,v>1$. Combining \eqref{Sgood} and \eqref{recurT} for $i=u-1$ we see that we cannot have $v-u=0$. 

Let $\alpha=\frac{T_u-t_u}{t_1}>0$. Consider the sequence $\{t'_i\}_{i=1}^\infty$, where $t'_i=t_i$ for $i<u$ and $t'_i=t_i+\alpha t_{i-u+1}$ for $i\geq u$.  This sequence is still increasing. We have $t'_1=t_1$ and, by the choice of $\alpha$ we have $t_u=T_u$. Additionally, $t'_v\geq t_v>T_v$. This means that, if we define $u'$ and $v'$ analogously to $u$ and $v$, then $u'>u$ and $v'\leq v$, which leads to $v'-u'<v-u$. Finally, observe that $\{t'_i\}$ is still good, since \eqref{Sgood} is satisfied for $i< u$ (the equation becomes \eqref{recurT}) and for $i\geq u$ we have

\begin{align*}St'_i=&St_i+St_{i-u+1}\\=&\frac{2}{1+\gamma} (t_i+t_{i-u+1})+\sum\limits_{j=1}^{i+1}\frac{2}{1-\gamma^2}\left(\frac{1-\gamma}{1+\gamma}\right)^{i-j+2}(t_j+t_{j-1}+t_{j-u+1}+t_{j-u})\\=&\frac{2}{1+\gamma} t'_i+\sum\limits_{j=1}^{i+1}\frac{2}{1-\gamma^2}\left(\frac{1-\gamma}{1+\gamma}\right)^{i-j+2}(t'_j+t'_{j-1})\end{align*}

taking $t_i=0$ for $i\leq 0$. This completes the induction step.\end{proof}

To complete the proof of Theorem \ref{exactf}, we show that $T_i\leq 0$ for some $i$ if $S<\frac{2\gamma^2+2\gamma+8+\sqrt{32(1-\gamma)}}{(\gamma+1)^3}$. The reason is that the sequence $T_i$ also satisfies the recursion \[ST_i-\frac{1-\gamma}{1+\gamma}ST_{i-1}=\frac{2}{1+\gamma}\left(T_i-\frac{1-\gamma}{1+\gamma}T_{i-1}\right)+\frac{2}{1-\gamma^2}\frac{1-\gamma}{1+\gamma}(T_{i+1}-T_i).\]

which can be rewritten as $T_{i+1}+\alpha T_i+\beta T_{i-1}=0$. If $S$ is smaller than the claimed bound, then the roots of the polynomial $x^2+\alpha x+\beta$ are not real. This implies that $T_i\leq 0$ for some $i$ (see for example \cite{BW81}). This proves that $S\geq\frac{2\gamma^2+2\gamma+8+\sqrt{32(1-\gamma)}}{(\gamma+1)^3}$, which produces the desired value of $f(\lambda)$.\end{proof}

\begin{prop}\label{flb} For $x>1$ we have $f(x)\geq \frac{x+1}{2x+1}$.\end{prop}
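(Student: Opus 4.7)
My plan is to convert the desired inequality $f(\lambda)\geq(\lambda+1)/(2\lambda+1)$ into a lower bound on $h(\gamma)$ for $\gamma=\frac{\lambda-1}{\lambda+1}\in(0,1)$, then prove that lower bound directly from the Lipschitz constraint. Concretely, unwinding the definition of $f$ and simplifying shows that $f(\lambda)\geq \frac{\lambda+1}{2\lambda+1}$ is equivalent to $\frac{2\lambda}{(1+\lambda)^2}h(\gamma)+\frac{2\lambda}{1+\lambda}\geq\frac{2\lambda+1}{\lambda}$, which rearranges to $h(\gamma)\geq\frac{(3\lambda+1)(\lambda+1)}{2\lambda^2}$; substituting $\lambda=(1+\gamma)/(1-\gamma)$ this becomes
\[
h(\gamma)\ \geq\ \frac{2(2+\gamma)}{(1+\gamma)^2}.
\]
So the whole problem reduces to establishing this inequality for all $\gamma\in(0,1)$ (in fact the argument will work for $\gamma\in(-1,1)$).

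To prove this, fix any 1-Lipschitz $g$ with $g(0)=0$ and any $t>0$. If either of $\Gamma^+_\gamma(g,t),\Gamma^-_\gamma(g,t)$ is infinite the bound is trivial, so assume both are finite and set $x^+=\Gamma^+_\gamma(g,t)$, $x^-=\Gamma^-_\gamma(g,t)$. By continuity we have $g(x^+)=t-\gamma x^+$ and $g(x^-)=\gamma x^--t$, so
\[
g(x^+)-g(x^-)=2t-\gamma(x^++x^-).
\]
The 1-Lipschitz condition gives $g(x^+)-g(x^-)\leq|x^+-x^-|$, and comparing with $g(0)=0$ gives the trivial lower bounds $x^+\geq t/(1+\gamma)$ and $x^-\geq t/(1+\gamma)$.

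Now I combine these. Without loss of generality $x^+\geq x^-$ (otherwise replace $g$ by $-g$, which swaps $\Gamma^+$ and $\Gamma^-$). Then $2t\leq(1+\gamma)x^+-(1-\gamma)x^-$, hence $x^+\geq\frac{2t+(1-\gamma)x^-}{1+\gamma}$, and consequently
\[
x^++x^-\ \geq\ \frac{2t+2x^-}{1+\gamma}\ \geq\ \frac{2t+2t/(1+\gamma)}{1+\gamma}\ =\ \frac{2(2+\gamma)}{(1+\gamma)^2}\,t.
\]
Since this holds for every $t$, the limsup defining $h(\gamma)$ is at least $\frac{2(2+\gamma)}{(1+\gamma)^2}$, and taking the infimum over $g$ preserves the bound. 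Plugging this back into the algebraic equivalence from the first paragraph then yields $f(\lambda)\geq\frac{\lambda+1}{2\lambda+1}$.

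There is no real obstacle here: the proof is essentially a one-line manipulation of the Lipschitz inequality combined with the trivial bound $x^\pm\geq t/(1+\gamma)$. The only mildly delicate point is that the Lipschitz constraint alone gives $x^++x^-\geq 2t/(1+\gamma)$, which is weaker than what we need; the extra factor $(2+\gamma)/(1+\gamma)$ comes from feeding the trivial bound back into the Lipschitz-derived inequality, which is what provides the improvement.
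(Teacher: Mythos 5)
Your proof is correct, and it takes a genuinely different route from the paper. The paper derives the lower bound on $f$ \emph{indirectly}: it constructs a concrete finite graph $F$ on $r+s$ vertices with $\alpha(F)=r$, computes $\liminf\mu(\omega\cdot F,n)/n=s/r$, and then squeezes $f(s/r)\geq\rho(\omega\cdot F)\geq\frac{r+s}{r+2s}$ between Theorem \ref{mainub} and Theorem \ref{beslb} (the latter resting on the Burr--Erd\H{o}s--Spencer Ramsey number for $n\cdot F$); it then passes to irrational $x$ by continuity. You instead attack the Lipschitz optimization problem defining $h(\gamma)$ directly: from $g(x^+)=t-\gamma x^+$ and $g(x^-)=\gamma x^--t$ the Lipschitz condition gives $(1+\gamma)x^{\max}\geq 2t+(1-\gamma)x^{\min}$, and feeding in the trivial bound $x^{\min}\geq t/(1+\gamma)$ yields $x^++x^-\geq\frac{2(2+\gamma)}{(1+\gamma)^2}\,t$ for \emph{every} $t$, hence for the $\limsup$, hence for the infimum over $g$; the algebraic translation to $f(\lambda)\geq\frac{\lambda+1}{2\lambda+1}$ checks out. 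Your argument is self-contained (no Ramsey theory, no Regularity Lemma, no Theorem \ref{beslb}), works uniformly in $t$ so no continuity-of-$f$ step is needed, and in fact establishes the inequality for all $\lambda>0$ rather than only $\lambda>1$; what the paper's route buys instead is a nice illustration that known finite Ramsey numbers transfer to lower bounds on $f$ through the extremal graphs $\omega\cdot F$. One small expository remark: the ``WLOG $x^+\geq x^-$'' is harmless but slightly misleading as phrased, since replacing $g$ by $-g$ is a global change while the inequality is per-$t$; it is cleaner to just observe that the target $x^++x^-$ and the two ingredients (the Lipschitz bound and the trivial bound) are symmetric in $+$ and $-$, so one may run the argument with the larger of the two playing the role of $x^+$.
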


\begin{proof}We will prove this statement for rational values of $x$, and it will follow for irrational values because $f$ is continuous (Proposition \ref{continuity}).

Let $x=s/r$. Let $F$ be the graph on $r+s$ vertices whose complement is a clique on $r$ vertices (hence $\alpha(F)=r$). We have $\mu(\omega\cdot F, n)=s\lceil n/r\rceil$, because an independent set $I$ intersects at least $\lceil n/r\rceil$ components and has at least $s$ neighbors in each. Combining Theorem \ref{mainub} and Theorem \ref{beslb} we find $f(s/r)\geq \rho(\omega\cdot F)\geq \frac{r+s}{r+2s}$, or $f(x)\geq \frac{x+1}{2x+1}$. \end{proof}

\begin{minipage}[t]{0.5\linewidth}
	Ander Lamaison\\ 
	\texttt{<lamaison@zedat.fu-berlin.de>}\\
	Institut f\"ur Mathematik, Freie Universit\"at Berlin and Berlin Mathematical School, Berlin, Germany. 
\end{minipage}

\end{document}